\newcommand{\R}{\ensuremath{\mathbb{R}}}
\newcommand{\Z}{\ensuremath{\mathbb{Z}}}
\newcommand{\N}{\ensuremath{\mathbb{N}}}
\newcommand{\Omeg}{(0,1)^m}
\newcommand{\Wn}{W^{1,p}_0(\Omeg, \R^N)}
\newcommand{\I}[1]{\int\limits_{\Omeg} #1 \, dx}
\newcommand{\Iz}[1]{\int\limits_{(0,1)^2} #1 \, dx}
\newcommand{\supp}{\operatorname{supp}}
\newtheorem{definition}{Definition}[section]
\newtheorem{lemma}[definition]{Lemma}
\newtheorem{proposition}[definition]{Proposition}
\newtheorem{satz}[definition]{Theorem}
\newtheorem*{satz1}{Theorem}
\newtheorem{bemerkung}[definition]{Remark}
\newtheorem{korollar}[definition]{Corollary}
\theoremstyle{definition}
\numberwithin{equation}{section}
\numberwithin{definition}{section}
\begin{document}
\title{On the Approximation of Anisotropic Energy Functionals by Riemannian Energies via Homogenization} 
\author{Till Knoke} 

\maketitle 

\section*{Abstract}

In \cite{paper}, Braides, Buttazzo and Fragala proved the density of Riemannian energies in the class of Finsler energy functionals with respect to $\Gamma$-convergence in the one-dimensional case. In this thesis we prove that one of the main tools in \cite{paper}, a homogenization theorem, can be extended to arbitrary dimension, however, the density result cannot be generalized to higher dimensions. In fact, we construct counterexamples that show: there are anisotropic energy functionals, such as Finsler energies, Cartan functionals and their dominance functionals that cannot be $\Gamma$-approximated by Riemannian energies.

\section{Introduction}\pagenumbering{arabic}

In \cite{paper}, Braides, Buttazzo and Fragala established the density of isotropic Riemannian energy functionals in a class of in general anisotropic Finsler energy functionals with respect to the topology induced by $\Gamma$-convergence. To be more precise, for every Finsler energy functional
\begin{align*}
 L(u)=\int\limits_I \varphi(u(x), Du(x)) \, dx
\end{align*}
defined on curves $u: I \subset \R \rightarrow \R^N$, where $\varphi(s,z)$ is lower semicontinuous in $s$, convex and $2$-homogeneous in $z$ and $m_1 \lvert z \rvert^2 \leq \varphi(s,z) \leq m_2 \lvert z \rvert^2$ for every $(s,z) \in \R^N \times \R^N$ for some constants $0<m_1 \leq m_2$, there exists a sequence of Riemannian energies of the form
\begin{align*}
 L^n(u)=\int\limits_I a^n(u(x)) \lvert Du(x) \rvert^2 \, dx,
\end{align*}
where $a^n$ are lower semicontinuous functions bounded from above and below, such that the functionals $L^n$ $\Gamma$-converges to $L$ in the $(L^2(I, \R^N))$-topology. This means that for every sequence $u^n$ converging to $u$ in $L^2(I, \R^N)$ the \textit{liminf-inequality}
\begin{align*}
 L(u) \leq \liminf\limits_{n \rightarrow \infty} L^n(u^n)
\end{align*}
holds and that there exists a \textit{recovery sequence} $u^n$ converging to $u$ in $L^2(I, \R^N)$ satisfying the \textit{limsup-inequality}
\begin{align*}
 L(u) \geq \limsup\limits_{n \rightarrow \infty} L^n(u^n).
\end{align*}
The present work addresses the question if such a density result can be generalized to higher dimensions. We will discuss the Riemannian approximation of energy functionals of Finsler metrics (see \cite{Centore}, \cite{Fuglede}, \cite{Jost}, \cite{Mo}, \cite{Tachikawa})
\begin{align}
 L(u)=\int\limits_{\Omega} \varphi(u(x),Du(x)) \, dx \label{Finsler}
\end{align}
with a Lagrangian $\varphi: \R^N \times \R^{N \times m} \rightarrow \R$ $2$-homogeneous in the second variable, or of Cartan functionals (see \cite{PlatI}, \cite{PlatII}, \cite{vdM})
\begin{align}
 L(u)=\int\limits_{\Omega} \Phi(u(x),Du_1(x) \wedge Du_2(x)) \, dx \label{Cartanfunctional}
\end{align}
with a function $\Phi: \R^3 \times \R^3 \rightarrow \R$ positively $1$-homogeneous in the second variable, and of their dominance functionals (see \cite{Hildebrandt})
\begin{align}
 G(u)=\int\limits_{\Omega} g(u(x), Du(x)) \, dx \label{dominancefunctional}
\end{align}
with an integrand $g$ which is a dominance function of the parametric integrand $\Phi$ of the Cartan functional $L$. That is, the associated Lagrangian $f$ of $\Phi$, given by
\begin{align*}
 f(s,A):=\Phi(s,A_1 \wedge A_2) \text{ for any } s \in \R^3, A=(A_1, A_2) \in \R^3 \times \R^3
\end{align*}
satisfies $f(s,A) \leq g(s,A)$ with equality if and only if $\lvert A_1 \rvert^2=\lvert A_2 \rvert^2$ and $A_1 \cdot A_2 =0$. Since $\Gamma$-convergence implies the convergence of minimizers of the approximating Riemannian energies to a minimizer of the approximated functionals \eqref{Finsler}, \eqref{Cartanfunctional} or \eqref{dominancefunctional} under some mild assumptions (see \cite[Chapter 7]{maso}), one could hope to import some regularity results from the minimizers of the approximating functionals to the minimizers of the limit functionals. \\
The proof of the density result in \cite{paper} is based on a homogenization theorem (see \cite[Proposition 2.4]{paper}). Such a homogenization theorem holds also for uniformly almost periodic functions in the multi-dimensional case; see \cite[Theorem 15.3]{braidef}. As we will see in Chapter \ref{Homogentheo}, not every periodic function is uniformly almost periodic, but in Theorem \ref{th153} the homogenization result will be extended to all periodic functions in the following way (and this may be of independent interest):
\begin{satz1}
 Let $p>1$ and let $f: \R^m \times \R^N \times \R^{N \times m} \rightarrow \R$ be $[0,1]^m$-periodic in its first and $[0,1]^N$-periodic in its second variable satisfying
\begin{equation}
 c_1 \lvert A \rvert^p \leq f(x,s,A) \leq c_2 (1+ \lvert A \rvert^p) 
\end{equation}
for some $0<c_1 \leq c_2$ and all $(x,s,A) \in \R^m \times \R^N \times \R^{N \times m}$. Then there exists a quasi-convex function $f_{hom}: \R^{N \times m} \rightarrow \R$ such that for every bounded open subset $\Omega$ of $\R^m$ and every $u \in W^{1,p}(\Omega, \R^N)$ the $\Gamma$-limit (in the $L^p(\Omega, \R^N)$-topology)
\begin{equation}
 \Gamma(L^p(\Omega, \R^N))-\lim\limits_{\varepsilon \rightarrow 0} \int\limits_{\Omega} f\bigg(\frac{x}{\varepsilon},\frac{u(x)}{\varepsilon}, Du(x)\bigg) dx = \int\limits_{\Omega} f_{hom}(Du(x)) \, dx \nonumber
\end{equation}
exists, and the function $f_{hom}$ satisfies the equation
\begin{equation}
 f_{hom}(Y)=\lim\limits_{t \rightarrow \infty} \inf \left\{\frac{1}{t^m} \int\limits_{(0,t)^m} f(x, u(x)+Yx, Du(x)+Y)dx; u \in W^{1,p}_0((0,t)^m; \R^N)\right\} \nonumber
\end{equation}
for all $Y \in \R^{N \times m}$.
\end{satz1}
However, the approach used in \cite{paper} for the one-dimensional case can not be generalized to the multi-dimensional case, and we are going to show that we \textit{cannot} expect such a density result in higher dimensions. In Chapter \ref{Counterexamplessimp} we will see that an approximation of a Finsler metric is not always possible, at least if one of the Riemannian manifolds is a Euclidean domain. These results will be presented in Theorem \ref{bild} and Theorem \ref{urbild}. Furthermore, in Theorem \ref{cartanwachs} we prove that isotropic approximating sequences for Cartan functionals can not satisfy a certain growth condition if they exist at all. This growth condition would be expected intuitively since the Cartan functional itself satisfies this condition. Moreover, we will discuss the approximation of dominance functionals of Cartan functionals. These dominance functionals are interesting because every conformally parametrized minimizer $u$ (i.e. a function $u$ with $\lvert Du_1(x) \rvert^2=\lvert Du_2(x) \rvert^2$ and $Du_1(x) \cdot Du_2(x)=0$ for almost every $x \in \Omega$) of $L$ is a minimizer of $G$, too. This can easily be seen by the following inequality for every $v \in X$ (see \cite[Theorem 6.2]{vdM}):
\begin{align*}
 G(u) = L(u) \leq L(v)= \int\limits_{\Omega} f(v(x), Dv(x)) \, dx \leq \int\limits_{\Omega} g(v(x), Dv(x)) \, dx = G(v).
\end{align*}
However, in Theorem \ref{counteriso} we give a counterexample for the approximation of dominance functionals of non-even Cartan functionals by isotropic Riemannian energy functionals where the Riemannian manifold of the preimage of $u$ is a Euclidean domain $\Omega \subset \R^m$. Here, a Cartan functional is non-even, if there exists $(s,z) \in \R^3 \times \R^3$ so that $\Phi(s,z) \neq \Phi(s,-z)$ for the parametric integrand $\Phi$. In Theorem \ref{dom} this counterexample will be extended to certain dominance functionals of even Cartan functionals.
In Chapter \ref{Counterexamples} we show that we can drop the assumption that one of the Riemannian manifolds is a Euclidean domain under certain conditions on the approximating sequences, and we still find counterexamples for the approximation of all Finsler metrics (Theorem \ref{Gegenbeispielallg}), any Cartan functional (Theorem \ref{GegenbeispielCart}) and all perfect dominance functionals of Cartan functionals (Theorem \ref{GegenbeispielDom}). These conditions on the approximating sequences are used in Chapter \ref{PropApprox} to prove that an approximating sequence of an anisotropic energy with an integrand only depending on the values of the derivative $Du(x)$ can be chosen independently of $x$ and $u(x)$ as well (Theorem \ref{indepa} and Theorem \ref{indepb}) under these conditions. One of these conditions is quite technical providing a certain uniform absolute continuity of the respective recovery sequences of the approximating sequences. This condition is needed to prove an extension of \cite[Proposition 12.3]{braidef} from all open sets to all Borel sets (see Proposition \ref{measure}) in the following way:\\
\\
 Let $L^n$ be a sequence of energy functionals satisfying a growth condition and let every subsequence of $L^n$ satisfy the technical condition mentioned above. Then there exists a subsequence $L^{n_k}$ so that $F(u,E)=\Gamma(L^2(\Omega, \R^N))-\lim_{k \rightarrow \infty} L^{n_k}(u,E)$ exists for all $u \in W^{1,2}(\Omega,\R^N)$ and every Borel set $E \subset \Omega$ and $F(u, \cdot)$ is a Borel measure for every $u \in W^{1,2}(\Omega, \R^N)$.
\\
\\
This proposition is used to show that the approximating energy functionals of an anisotropic energy with an integrand only depending on the values of $Du(x)$ can be chosen independent of $u(x)$ so the technical assumption can be replaced by the assumption that the approximating sequences are independent of $u(x)$ in the first place.

\section{A Homogenization Theorem}\label{Homogentheo}

In \cite[Theorem 15.3]{braidef}, Braides and Defranceschi proved a homogenization result for uniformly almost periodic functions. In this section, we will see that not every periodic function is uniformly almost periodic and afterwards we will extend the homogenization theorem to all periodic functions. First we recall the definition for uniformly almost periodicity (see \cite[Definition 15.1]{braidef}).

\begin{definition}
 Let $(X, \lVert \cdot \rVert)$ be a complex Banach space. We say that a measurable function $v: \R^N \rightarrow X$ is uniformly almost periodic if it is the uniform limit of a sequence of trigonometric polynomials on $X$, i.e. $\lim\limits_{k \rightarrow \infty} \lVert P_k-v \rVert_{\infty}=0$ for some functions of the form $P_k(y)=\sum\limits_{j=1}^{r_k} x_j^k e^{i \lambda_j^k \cdot y}$ with $x^k_j \in X$, $\lambda_j^k \in \R^N$ and $r_k \in \N$. The definition easily extends to real Banach spaces.
\end{definition}

\begin{definition}
 A set $T \subset \R^N$ is relatively dense in $\R^N$ if there exists an inclusion length $L>0$ such that $T+[0,L)^N=\R^N$.
\end{definition}

By virtue of \cite[Theorem A.6]{braidef}, for an uniformly almost periodic function $f: \R^m \times \R^N \times \R^{N \times m} \rightarrow \R$ for all $Y \in \R^{N \times m}$ and $\eta>0$, the sets
\begin{align*}
 T_{\eta}^Y= \left\{ \tau \in \R^m; \lvert f(x +\tau, s+Y\tau, A)- f(x, s, A) \rvert < \eta (1+\lvert A \rvert^p)  \, \forall \, (x,s,A) \in \R^m \times \R^N \times \R^{N \times m} \right\}
\end{align*}
are relatively dense in $\R^N$.

Define
\begin{align*}
 f(x,s,A):=\begin{cases} 1 & s \in \Z^N \\
            2 & \text{otherwise}.
          \end{cases}
\end{align*}
Clearly, $f$ is $[0,1]^m$-periodic in its first variable and $[0,1]^N$-periodic in its second variable, but for $m=N=2$ and $Y=\begin{pmatrix} 1 & 1 \\ \sqrt{2} & \sqrt{2} \end{pmatrix}$ we have
\begin{align*}
 \lvert f(x +\tau, Y\tau, A)- f(x, 0, A) \rvert = \begin{cases} 0 & Y\tau \in \Z^2 \\
                                                   1 & \text{otherwise}.
                                                  \end{cases}
\end{align*}
Thus, $\tau \in T^Y_{\eta}$ is equivalent to $Y\tau \in \Z^2$ for all $\eta<1$. This implies that $T^Y_{\eta}=\left\{ \tau \in \R^2; \tau_1=-\tau_2 \right\}$. Now assume $T^Y_{\eta}$ were relatively dense in $\R^N$. Then there would be an inclusion length $L>0$. Let $x=\begin{pmatrix} 2L \\ 2L \end{pmatrix}$. If there were a $\tau \in T^Y_{\eta}$ and a $y \in [0,L)^2$ with $x=\tau+y$, we would deduce $L=2L-L \leq 2L-y_1 = x_1-y_1=\tau_1$ and $\tau_2=-\tau_1 \leq -L$. This would imply $y_2=x_2-\tau_2 \geq 2L+L=3L$, which is a contradiction to $y \in [0,L)^2$. Thus, $T^Y_{\eta}$ is not relatively dense in $\R^N$ and so, $f$ cannot be uniformly almost periodic. To extend \cite[Theorem 15.3]{braidef} to all functions which are $[0,1]^m$-periodic in its first variable and $[0,1]^N$-periodic in its second variable, we will follow the proof in \cite{braidef} and adjust it to our new setting. We start with a lemma similar to \cite[Proposition 15.4]{braidef}:

\begin{lemma}\label{prop154}
 Let $p>1$ and let $f: \R^m \times \R^N \times \R^{N \times m} \rightarrow \R$ be $[0,1]^m$-periodic in its first and $[0,1]^N$-periodic in its second variable satisfying $c_1 \lvert A \rvert^p \leq f(x,s,A) \leq c_2 (1+ \lvert A \rvert^p)$ for some $0<c_1 \leq c_2$ and all $(x,s,A) \in \R^m \times \R^N \times \R^{N \times m}$. Then for every sequence $(\varepsilon_j)$ of positive real numbers converging to $0$, there exists a subsequence $(\varepsilon_{j_k})$ and a quasi-convex function $\varphi: \R^{N \times m} \rightarrow \R$ such that for every bounded set $\Omega \subset \R^m$ the $\Gamma$-limit
\begin{equation}
 \Gamma(L^p(\Omega, \R^N))-\lim\limits_{k \rightarrow \infty} \int\limits_{\Omega} f\left(\frac{x}{\varepsilon_{j_k}},\frac{u(x)}{\varepsilon_{j_k}}, Du(x)\right) \, dx=\int\limits_{\Omega} \varphi (Du(x)) \, dx \nonumber
\end{equation}
exists for every $u \in W^{1,p}_0(\Omega, \R^N)$.
\end{lemma}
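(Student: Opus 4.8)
The plan is to follow the scheme of \cite{braidef} for almost periodic homogenization, but to replace the use of the approximate-period sets $T^Y_\eta$ — which, as exhibited above, need not be relatively dense for a merely periodic $f$ — by the \emph{exact} $\Z^m$- resp.\ $\Z^N$-periodicity of $f$, exploited \emph{separately} in its first and its second variable.

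First I would set $F_\varepsilon(u,\Omega):=\int_\Omega f\big(\tfrac{x}{\varepsilon},\tfrac{u(x)}{\varepsilon},Du(x)\big)\,dx$ and invoke the compactness and integral-representation theory of $\Gamma$-convergence for integral functionals with standard $p$-growth (\cite[Chapter~12]{braidef}; cf.\ also \cite[Chapters~19--20]{maso}). The integrands $(x,s,A)\mapsto f(x/\varepsilon,s/\varepsilon,A)$ are Borel measurable — note that they need \emph{not} be continuous, as the example preceding the lemma shows — and satisfy $c_1|A|^p\le f(x/\varepsilon,s/\varepsilon,A)\le c_2(1+|A|^p)$ uniformly in $\varepsilon$. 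Hence, using in addition that the $\Gamma$-limit $F(u,\cdot)$ is the restriction to open sets of a Borel measure, one may extract from $(\varepsilon_j)$ a subsequence $(\varepsilon_{j_k})$ such that, for every bounded open $\Omega$, the $\Gamma$-limit $F(\cdot,\Omega)=\Gamma$-$\lim_k F_{\varepsilon_{j_k}}(\cdot,\Omega)$ in $L^p(\Omega,\R^N)$ exists and is represented by a single Carathéodory integrand $\varphi$ (independent of $\Omega$), quasi-convex in its last variable and with the same growth bounds; that is, $F(u,\Omega)=\int_\Omega\varphi(x,u(x),Du(x))\,dx$ for $u\in W^{1,p}(\Omega,\R^N)$.

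Next I would show that $\varphi$ depends on neither its first nor its second variable. For the first variable, fix $\zeta\in\R^m$ and choose $k_j\in\Z^m$ with $\varepsilon_j k_j\to\zeta$; the change of variables $y=x+\varepsilon_j k_j$ and the identity $f(y/\varepsilon_j+k_j,s,A)=f(y/\varepsilon_j,s,A)$ yield $F_{\varepsilon_j}(v,\Omega+\varepsilon_j k_j)=F_{\varepsilon_j}\big(v(\cdot+\varepsilon_j k_j),\Omega\big)$. Passing to the $\Gamma$-limit along $(\varepsilon_{j_k})$ — using the continuity of translations in $L^p$, the translation invariance of Lebesgue measure, and the inner regularity of the measure $F(v,\cdot)$ to absorb the moving domains $\Omega+\varepsilon_{j_k}k_k\to\Omega+\zeta$ — gives the translation covariance $F(v,\Omega+\zeta)=F\big(v(\cdot+\zeta),\Omega\big)$; comparing the two integral representations (testing with affine maps and using Lebesgue points) forces $\varphi(x+\zeta,s,A)=\varphi(x,s,A)$ for a.e.\ $x$, and since $\zeta$ is arbitrary $\varphi$ admits an $x$-free representative. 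For the second variable no moving domain is needed: fixing $\sigma\in\R^N$ and $b_j\in\Z^N$ with $\varepsilon_j b_j\to\sigma$, the $[0,1]^N$-periodicity of $f$ in the second slot gives $F_{\varepsilon_j}(u+\varepsilon_j b_j,\Omega)=F_{\varepsilon_j}(u,\Omega)$ outright (the gradient is unchanged and $(u+\varepsilon_j b_j)/\varepsilon_j=u/\varepsilon_j+b_j$ with $b_j\in\Z^N$); applying this to a recovery sequence for $F(u,\Omega)$ gives $F(u+\sigma,\Omega)\le F(u,\Omega)$, hence, replacing $\sigma$ by $-\sigma$, equality, so that $\varphi(s+\sigma,A)=\varphi(s,A)$ for a.e.\ $s$ and therefore $\varphi=\varphi(A)$. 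Quasi-convexity of $\varphi$ and the bounds $c_1|A|^p\le\varphi(A)\le c_2(1+|A|^p)$ are inherited from the integral representation, and restricting to $u\in W^{1,p}_0(\Omega,\R^N)$ yields the assertion.

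The step I expect to be most delicate is the first one: transporting the general compactness and integral-representation machinery to the present non-continuous periodic integrand with the coupled rescaling $f(x/\varepsilon,u/\varepsilon,Du)$, obtaining the $\Omega$-independence of $\varphi$, and controlling the moving domains in the translation argument through the measure property. By contrast, the $x$- and $s$-independence steps, although they are precisely where the argument departs from \cite{braidef}, are soft — exactly because decoupling the two periodicities makes the sets $T^Y_\eta$ unnecessary.
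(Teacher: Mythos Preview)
Your proposal is correct and follows essentially the same route as the paper: extract a subsequence via the compactness/integral-representation machinery of \cite[Chapter~12]{braidef}, then use the exact $\Z^m$- and $\Z^N$-periodicity (through integer translates $\varepsilon_{j_k}k_k$ and $\varepsilon_{j_k}b_k$) to eliminate the $x$- and $s$-dependence of the limiting integrand. The only cosmetic differences are that the paper first establishes $F(u+a,\cdot)=F(u,\cdot)$ so as to invoke \cite[Theorem~9.1]{braidef} (obtaining $\varphi(x,A)$ directly rather than $\varphi(x,s,A)$), and carries out the $x$-independence by testing on affine maps $x\mapsto Yx$ and appealing to \cite[Proposition~9.2]{braidef} --- which forces an auxiliary correction $\sigma_k$ in the second slot (since $Y\tau_k/\varepsilon_{j_k}$ need not lie in $\Z^N$) that your general translation-covariance argument for arbitrary $v$ sidesteps.
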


\begin{proof}
Let $\mathcal{A}(\Omega)$ denote the family of all open subsets of $\Omega$. By applying \cite[Proposition 12.3]{braidef} to the family of functionals $F_{\varepsilon}: W^{1,p}(\Omega, \R^N) \times \mathcal{A}(\Omega) \rightarrow [0, \infty]$ defined by $F_{\varepsilon}(u,U):=\int\limits_U f(\tfrac{x}{\varepsilon},\tfrac{u(x)}{\varepsilon}, Du(x)) \, dx$, we obtain the existence of a subsequence $(\varepsilon_{j_k})$ such that the limit
\begin{equation}
 F(u,U)=\Gamma(L^p(\Omega, \R^N))-\lim\limits_{k \rightarrow \infty} \int\limits_U f\left(\frac{x}{\varepsilon_{j_k}},\frac{u(x)}{\varepsilon_{j_k}}, Du(x)\right) \text{ } dx \label{Steffi13}
\end{equation}
exists for every $u \in W^{1,p}(\Omega, \R^N)$ and $U \in \mathcal{A}(\Omega)$, and the set function $F(u, \cdot)$ is the restriction of a Borel measure to $\mathcal{A}(\Omega)$. Obviously, $F(u,U)=F(v,U)$ whenever $U \in \mathcal{A}(\Omega)$ and $u=v$ almost everywhere on $U$. Let $u \in W^{1,p}(\Omega, \R^N)$ and $U \in \mathcal{A}(\Omega)$. Then $F(u,U) \leq \liminf\limits_{k \rightarrow \infty} F_{\varepsilon_{j_k}}(u,U) \leq c_2 \int\limits_U 1 + \lvert Du \rvert^p \text{ } dx$. Since the derivatives of the recovery sequence $(u_k)$ are equally bounded due to the growth condition of $f$ and by the weak compactness of reflexive Banach spaces, $(u_k)$ has a $W^{1,p}(\Omega,\R^N)$-weakly converging subsequence $u_{k_l}$. Then $(Du_{k_l})$ converges $L^p(\Omega, \R^{N \times m})$-weakly to a limit function $h \in L^p(\Omega, \R^{N \times m})$ and
\begin{align*}
\int\limits_{\Omega} u(x) D\varphi(x) \text{ } dx=\lim\limits_{l \rightarrow \infty} \int\limits_{\Omega} u_{k_l}(x) D\varphi(x) \text{ } dx &=-\lim\limits_{l \rightarrow \infty}  \int\limits_{\Omega} \varphi(x) Du_{k_l}(x) \text{ } dx
\end{align*}
which equals $-\int\limits_{\Omega} \varphi(x) h(x) \, dx$ for all $\varphi \in C_0^1(\Omega, \R^N)$, so $h$ is the weak derivative of $u$. Since every weakly convergent subsequence of $(Du_k)$ converges weakly to $Du$ and every subsequence has a weakly converging subsequence, the whole sequence converges weakly to $Du$ and so
\begin{equation}
 F(u,U) \geq \limsup\limits_{k \rightarrow \infty} F_{\varepsilon_{j_k}}(u_k,U) \geq c_1 \int\limits_U \lvert Du_k \rvert^p \text{ } dx \geq c_1 \int\limits_U \lvert Du \rvert^p \text{ } dx  \nonumber
\end{equation}
by the weak lower semicontinuity of norms. \\
Let $U \in \mathcal{A}(\Omega)$, $u \in W^{1,p}(\Omega, \R^N)$, $a \in \R^N$ and let $(u_k)$ be the recovery sequence in $W^{1,p}(\Omega, \R^N)$ such that $u_k \rightarrow u$ in $L^p(U, \R^N)$. 
Define the sequence $(a_k)$ in $\R^N$ such that $(a_k)_i:=\left\lfloor a_i/\varepsilon_{j_k} \right\rfloor \varepsilon_{j_k}$ for all $i \in \left\{ 1,2,..,N\right\}$. Thus, $(a_k)_i \leq a_i$ and $(a_k)_i \geq a_i - \varepsilon_{j_k}$, so $a_k$ converges to $a$ and $a_k/\varepsilon_{j_k} \in \Z^N$. Then by the periodicity we achieve
\begin{eqnarray}
 F(u+a,U) &\leq& \liminf\limits_{k \rightarrow \infty} \int\limits_U f\left( \frac{x}{\varepsilon_{j_k}}, \frac{u_k(x)}{\varepsilon_{j_k}} + \frac{a_k}{\varepsilon_{j_k}} , Du_k(x)\right) \text{ }dx \nonumber\\
&\leq& \limsup\limits_{k \rightarrow \infty} \int\limits_U f\left( \frac{x}{\varepsilon_{j_k}}, \frac{u_k(x)}{\varepsilon_{j_k}}, Du_k(x)\right) \text{ }dx = F(u,U). \nonumber
\end{eqnarray}
By a symmetry argument we get $F(u+a,U)=F(u,U)$. By these properties and the lower semicontinuity of the $\Gamma$-limit, we can apply \cite[Theorem 9.1]{braidef} to obtain the existence of a Carath�odory function $\varphi: \R^m \times \R^{N \times m} \rightarrow \R$ such that $F(u, \Omega)=\int\limits_{\Omega} \varphi (x, Du(x))\text{ }dx$ for every $u \in W^{1,p}(\Omega, \R^N)$.
Now fix $y, z \in \R^m$, $\rho>0$ and $Y \in \R^{N \times m}$, let $B(y, \rho)$ denote the ball with center $y$ and radius $\rho$, and let $(u_k)$ be a sequence in $W^{1,p}_0(B(y, \rho), \R^N)$ such that $u_k \rightarrow 0$ in $L^p(B(y, \rho), \R^N)$ and
\begin{equation}
 \lim\limits_{k \rightarrow \infty} F_{\varepsilon_{j_k}}(Yx+u_k, B(y,\rho))=F(Yx, B(y, \rho)) \label{Steffi11}
\end{equation}
and extend $u_k$ to $\R^m$ by $0$ outside of $B(y, \rho)$. Let $\tau_k$ and $\sigma_k$ be sequences in $\R^m$ defined by $(\tau_k)_i := \left\lfloor (z-y)_i/\varepsilon_{j_k} \right\rfloor \varepsilon_{j_k}$ and $(\sigma_k)_i := \left\lfloor (-Y \tau_k)_i/\varepsilon_{j_k} \right\rfloor \varepsilon_{j_k} + (Y \tau_k)_i$. Then $\tau_k \rightarrow z-y$, $\sigma_k \rightarrow 0$ and
\begin{align}
 \frac{\tau_k}{\varepsilon_{j_k}} \in \Z^m, \quad \frac{\sigma_k-Y\tau_k}{\varepsilon_{j_k}} \in \Z^N. \label{Steffi14}
\end{align}
 Define $v_k(x):=u_k(x)-\sigma_k$ and $w_k(x):=v_k(x-\tau_k)$. Then we obtain for $r>1$ by first using \eqref{Steffi11}, then transforming the integral over $B(y, \rho)$ to $\tau_k+B(y, \rho)$ and using the periodicity of $f$ and at last splitting the ball $\tau_k + B(y, \rho)$ into $B(z, \rho r)$ and its complement for $k$ large enough, using the growth condition on $f$ and using \eqref{Steffi13} that
\begin{eqnarray}
 F(Yx, B(y, \rho)) &\geq& F(Yx, B(z, \rho)) -\lvert B(z, \rho r) \setminus ( B(z,\rho)) \rvert c_2 (1+  \lvert Y \rvert^p )\nonumber\\
&\longrightarrow& F(Yx, B(z, \rho)) \text{ for } r \rightarrow 1.\nonumber
\end{eqnarray}
The opposite inequality $F(Yx, B(y, \rho)) \leq F(Yx, B(z, \rho))$ is again obtained by a symmetry argument, so $F(Yx, B(y, \rho)) = F(Yx, B(z, \rho))$. By \cite[Proposition 9.2]{braidef}, $\varphi$ is quasi-convex and independent of its first variable.
\end{proof}

The next lemma is similar to \cite[Proposition 15.5]{braidef}.

\begin{lemma}\label{prop155}
Let $p>1$ and let $f: \R^m \times \R^N \times \R^{N \times m} \rightarrow \R$ be $[0,1]^m$-periodic in its first and $[0,1]^N$-periodic in its second variable satisfying
\begin{equation}
 c_1 \lvert A \rvert^p \leq f(x,s,A) \leq c_2 (1+ \lvert A \rvert^p)  \nonumber
\end{equation}
for some $0<c_1 \leq c_2$ and all $(x,s,A) \in \R^m \times \R^N \times \R^{N \times m}$. Then the limit
\begin{equation}
 f_{hom}(Y)=\lim\limits_{t \rightarrow \infty} \inf\Bigg\{ \frac{1}{t^m} \int\limits_{(0,t)^m} f\left(x,Yx+u(x),Y+Du(x)\right)\text{ }dx; u \in W^{1,p}_0((0,t)^m, \R^N) \Bigg\} \nonumber
\end{equation}
exists for every $Y \in \R^{N \times m}$.
\end{lemma}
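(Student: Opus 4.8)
The plan is to study the localized minimum values
\[
 m(Y,\Omega):=\inf\Big\{\int_{\Omega} f\big(x,Yx+u(x),Y+Du(x)\big)\,dx : u\in W^{1,p}_0(\Omega,\R^N)\Big\},
\]
so that the quantity inside the limit is $g_t(Y)=t^{-m}\,m(Y,(0,t)^m)$; testing with $u\equiv 0$ and using the two‑sided bound on $f$ gives $0\le g_t(Y)\le c_2(1+\lvert Y\rvert^p)$, so it suffices to prove $\limsup_{t\to\infty}g_t(Y)\le\liminf_{t\to\infty}g_t(Y)$. The heart of the argument is an approximate subadditivity along integer scales. For $t\in\N$ and $k\in\N$, decompose $(0,kt)^m$ up to a null set into the $k^m$ cubes $jt+(0,t)^m$, $j\in\{0,\dots,k-1\}^m$; gluing near‑optimal correctors from the sub‑cubes, extended by zero (legitimate, since they have zero trace), yields $m(Y,(0,kt)^m)\le\sum_j m(Y,jt+(0,t)^m)$. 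Because $jt\in\Z^m$, the $[0,1]^m$‑periodicity in the first variable turns $m(Y,jt+(0,t)^m)$ into an infimum over $(0,t)^m$ whose integrand has second slot $Y(jt)+Yy+u(y)$, and reducing $Y(jt)$ modulo $\Z^N$ by the $[0,1]^N$‑periodicity in the second variable shows $m(Y,jt+(0,t)^m)=m^{c_j}(Y,(0,t)^m)$, where $c_j:=Y(jt)\bmod\Z^N\in[0,1)^N$ and $m^{c}(Y,\Omega)$ denotes the same infimum with $f\big(x,c+Yx+u(x),Y+Du(x)\big)$ in the integrand.

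The decisive step is then a uniform constant‑shift estimate,
\[
 m^{c}\big(Y,(0,t)^m\big)\ \le\ m^{0}\big(Y,(0,t-2)^m\big)+C\,t^{m-1}\qquad\text{for all }c\in[0,1)^N ,
\]
with $C$ depending only on $c_1,c_2,p,m,N,\lvert Y\rvert$. I would prove it by taking $v\in W^{1,p}_0((0,t-2)^m,\R^N)$ almost optimal for $m^{0}(Y,(0,t-2)^m)$, extending $v$ by zero to $(0,t)^m$, and setting $u:=v-\theta c$ with a cut‑off $\theta$ equal to $1$ on $(0,t-2)^m$ and to $0$ outside $(0,t-1)^m$: the crucial point is that $\theta$ makes its transition in the annular region where $v$ already vanishes, so there $Du=-c\,D\theta$ contains no term in $v$, while on $(0,t-2)^m$ one has $c+Yx+u=Yx+v$ and $Y+Du=Y+Dv$; hence the bulk contribution is exactly $\int f(x,Yx+v,Y+Dv)\le m^{0}(Y,(0,t-2)^m)+1$, and the annular and outer contributions are controlled by $C\,t^{m-1}$ using only $f\le c_2(1+\lvert A\rvert^p)$, $\lvert c\rvert\le\sqrt N$ and $\lvert D\theta\rvert\le C$. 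Combining with the tiling gives $m(Y,(0,kt)^m)\le k^m\big(m(Y,(0,t-2)^m)+C\,t^{m-1}\big)$, hence $g_{kt}(Y)\le g_{t-2}(Y)+C/t$ for all integers $t\ge 3$ and all $k\in\N$; writing a large integer $s$ as $kt+r$ with $0\le r<t$ and absorbing the $L$‑shaped remainder $(0,s)^m\setminus(0,kt)^m$, of volume $O(t\,s^{m-1})$, by the zero corrector, one obtains $\limsup_{s\to\infty,\,s\in\N}g_s(Y)\le g_{t-2}(Y)+C/t$ for every fixed $t$, and therefore $\limsup_{s\to\infty,\,s\in\N}g_s(Y)\le\liminf_{s\to\infty,\,s\in\N}g_s(Y)$. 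Finally, comparing $g_t(Y)$ with $g_{\lfloor t\rfloor}(Y)$ — the defining cubes differ by a set of measure $O(t^{m-1})$ — transfers the limit to all real $t\to\infty$, and one sets $f_{hom}(Y)$ equal to this common value.

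The main obstacle is precisely the constant‑shift estimate, and it is here that the $[0,1]^N$‑periodicity in the second variable is indispensable. Honest $\R^m$‑subadditivity of $m(Y,\cdot)$ is unavailable, because translating a sub‑cube shifts the second argument of $f$ by $Y$ times the translation vector, which — in contrast with the uniformly almost periodic setting of \cite[Proposition 15.5]{braidef}, where a relatively dense set of approximate periods does the job, and which genuinely fails here, as the example preceding Lemma \ref{prop154} shows — need not be close to a lattice point. Reducing that shift modulo $\Z^N$ confines it to the bounded cube $[0,1)^N$, after which the boundary transition costs only $O(t^{m-1})=o(t^m)$ and is uniform in the shift; were $Y(jt)$ allowed to grow with $t$, the contribution of $c\,D\theta$ would be of order $t^m$ and the whole scheme would collapse. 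Everything else — the gluing, the annular estimate, and the passage from integer to real $t$ — is routine once the $p$‑growth bounds on $f$ are in hand.
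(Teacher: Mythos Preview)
Your strategy is exactly the paper's: tile the large cube with translated copies of a near–optimal corrector on a small cube, use the $[0,1]^m$– and $[0,1]^N$–periodicities to reduce the translation‐induced shift in the second slot to a vector in $[0,1)^N$, and pay an $O(t^{m-1})$ boundary cost to kill that bounded shift. The paper packages this as a single test function $u_s$ on $(0,s)^m$ with a linear interpolation layer; you factor it through a separate ``constant–shift estimate'' and a cutoff, and then pass from integer to real scales. Both routes lead to the same inequality $\limsup g_s\le\liminf g_t$.

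There is, however, one concrete slip in your constant–shift step. The nested cubes $(0,t-2)^m\subset(0,t-1)^m\subset(0,t)^m$ all share the faces $\{x_i=0\}$, so no Lipschitz cutoff $\theta$ can be $1$ on $(0,t-2)^m$ and $0$ outside $(0,t-1)^m$; consequently $u=v-\theta c$ does not have zero trace on the lower faces of $(0,t)^m$ and is not in $W^{1,p}_0((0,t)^m,\R^N)$. The fix is immediate and is precisely what the paper does: inset the small cube, i.e.\ place the copy of $v$ on $(1,t-1)^m$ (or, in the paper's notation, on $\sigma_z+[0,t)^m$ with $\sigma_z\in\Z^m$ chosen so that a width–$1$ collar fits inside the tile), and let the transition happen on the surrounding annulus of width $1$. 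Because the inner shift $\sigma_z\in\Z^m$, the first–variable periodicity still applies and your $g_{t-2}$ term survives unchanged; the annular cost is still $O(t^{m-1})$ since the boundary value to be interpolated lies in $[0,1)^N$. With this correction your argument goes through and coincides with the paper's.
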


\begin{proof}
Let the matrix $Y \in \R^{N \times m}$ be fixed. For every $t>0$, define
\begin{equation}
 g_t:=\inf \Big\{ \frac{1}{t^m} \int\limits_{(0,t)^m} f(x, u(x)+Yx, Du(x)+Y) \text{ }dx; u \in W^{1,p}_0((0,t)^m, \R^N) \Big\} \label{Steffi15}
\end{equation}
and $u_t \in W^{1,p}_0((0,t)^m, \R^N)$ such that
\begin{equation}
 \frac{1}{t^m} \int\limits_{(0,t)^m} f(x, u_t(x)+Yx, Du_t(x)+Y) \text{ }dx \leq g_t + \frac{1}{t}. \label{Steffi12}
\end{equation}
Fix $t>0$ and $s>t+4$ and define $I_s$ as the set of all $z \in \Z^m$ such that $0  \leq z_j \leq \left\lfloor s/(t+4) \right\rfloor - 1$ for all $j \in \left\{1,2,..,m \right\}$, so
\begin{align}
 \lvert I_s \rvert=\left\lfloor \frac{s}{t+4} \right\rfloor^m. \label{Steffi16}
\end{align}
  Then, for every $z \in I_s$, choose $\sigma_z \in (t+4) z + (1,2]^m \cap \Z^m$, $\lambda_z \in Y \sigma_z + [0,1)^m \cap \Z^m$ and define $\tau_z \in \Z^m$ by $(\tau_z)_i:=(\sigma_z)_i-1$ for all $i \in \left\{1,2,..,m\right\}$. By these definitions, for $z \neq z'$, the following inequalities hold for all $i \in \left\{1,2,..,m \right\}$: $\lvert \sigma_z - \sigma_{z'} \rvert \geq t+3$, $\lvert \tau_z - \tau_{z'} \rvert \geq t+3$, $\lvert \sigma_z - \tau_{z'} \rvert \geq t+2$, $(\sigma_z)_i > 1$, $(\tau_z)_i > 0$, $(\sigma_z)_i \leq (t+4) (s/(t+4) - 1 ) +2 = s-t-2$, $(\tau_z)_i \leq  s-t-3$ (see Figure \ref{Bild}). Thus, $\forall \, z \in I_s$ we have $\tau_z + [0, t+2)^m \subset (0,s)^m$ and $\sigma_z + [0,t)^m \subset (0,s)^m$ and the sets $\tau_z+ [0,t+2)^m$ are disjoint. Define $B_z:= \sigma_z+[0,t)^m$, $A_z:= \tau_z+[0,t+2)^m \setminus B_z$ and $Q:=(0,s)^m \setminus \bigcup\limits_{z \in I_s} \tau_z + [0,t+2)^m$. Then 
\begin{align}
\lvert Q \rvert = s^m - (t+2)^m \lvert I_s \rvert\overset{\eqref{Steffi16}}{=}s^m - (t+2)^m \left\lfloor \frac{s}{t+4} \right\rfloor^m. \label{Steffi17}
\end{align}
 Define $u_s \in W^{1,p}_0((0,t)^m, \R^N)$ by
\begin{equation}
 u_s(x):= \left\{
  \begin{array}{ll}
    u_t(x-\sigma_z)-Y\sigma_z+\lambda_z & \text{if }x \in B_z \\ 
    0 & \text{if }x \in Q \\
    \text{linearly extended} & \text{if }x \in A_z. \\
  \end{array}
\right. \nonumber
\end{equation}
Then $u_s(x)=-Y\sigma_z+\lambda_z \in [0,1)^m$ for $x \in \partial B_z$ and $\lvert x'-x \rvert \geq 1$ for $x' \in Q, x \in B_z$, so $\lvert Du_s(x)+Y \rvert \leq c(Y)$ for $x \in A_z$. Now we can estimate $g_s$ by using \eqref{Steffi15} and the fact that $u_2 \in W^{1,p}_0((0,s)^m, \R^N)$, splitting the integral over $(0,s)^m$ into integrals over $Q$ and the sets $A_z$ and $B_z$ for $z \in I_s$  and using the growth condition on $f$, transforming the integrals over the sets $B_z$ into integrals over $[0,t)^m$ and using the periodicity of $f$, then using \eqref{Steffi12}, \eqref{Steffi17} and \eqref{Steffi16}:
\allowdisplaybreaks \begin{eqnarray}
 g_s &\leq& (1 - ( \tfrac{t+2}{t+4} - \tfrac{t+2}{s} )^m ) c_2 (1+\lvert Y \rvert^p) + \tfrac{(t+2)^m-t^m}{(t+4)^m} c_2 (1+ c(Y)^p)  \nonumber\\ 
&& + ( \tfrac{t}{t+4} )^m ( g_t + \tfrac{1}{t} ) \nonumber
\end{eqnarray}
Taking the limit, first as $s\rightarrow \infty$, then as $t \rightarrow \infty$, we obtain
\begin{eqnarray}
 &&\limsup\limits_{s \rightarrow \infty} g_s\leq \liminf\limits_{t \rightarrow \infty} \limsup\limits_{s \rightarrow \infty} (1 - ( \tfrac{t+2}{t+4} - \tfrac{t+2}{s} )^m ) c_2 (1+\lvert Y \rvert^p) \nonumber\\
&& \hspace{2 cm}+ \tfrac{(t+2)^m-t^m}{(t+4)^m} c_2 (1+ c(Y)^p) + ( \tfrac{t}{t+4} )^m ( g_t + \tfrac{1}{t} ) \nonumber\\
&=& \liminf\limits_{t \rightarrow \infty}  (1 - ( \tfrac{t+2}{t+4} )^m ) c_2 (1+\lvert Y \rvert^p)+ \tfrac{(t+2)^m-t^m}{(t+4)^m} c_2 (1+ c(Y)^p) + ( \tfrac{t}{t+4} )^m ( g_t + \tfrac{1}{t} ) \nonumber
\end{eqnarray}
which equals $\liminf\limits_{t \rightarrow \infty} g_t$. Thus, the limit exists and the proof is complete.

\end{proof}

Now we can extend \cite[Theorem 15.3]{braidef} to all periodic functions.

\begin{satz}\label{th153}
 Let $p>1$ and let $f: \R^m \times \R^N \times \R^{N \times m} \rightarrow \R$ be $[0,1]^m$-periodic in its first and $[0,1]^N$-periodic in its second variable satisfying
\begin{equation}
 c_1 \lvert A \rvert^p \leq f(x,s,A) \leq c_2 (1+ \lvert A \rvert^p) 
\end{equation}
for some $0<c_1 \leq c_2$ and all $(x,s,A) \in \R^m \times \R^N \times \R^{N \times m}$. Then there exists a quasi-convex function $f_{hom}: \R^{N \times m} \rightarrow \R$ such that for every bounded open subset $\Omega$ of $\R^m$ and every $u \in W^{1,p}(\Omega, \R^N)$ the limit
\begin{equation}
 \Gamma(L^p(\Omega, \R^N))-\lim\limits_{\varepsilon \rightarrow 0} \int\limits_{\Omega} f\bigg(\frac{x}{\varepsilon},\frac{u(x)}{\varepsilon}, Du(x)\bigg) dx = \int\limits_{\Omega} f_{hom}(Du(x)) \, dx \nonumber
\end{equation}
exists, and the function $f_{hom}$ satisfies the equation
\begin{equation}
 f_{hom}(Y)=\lim\limits_{t \rightarrow \infty} \inf \Bigg\{\frac{1}{t^n} \int\limits_{(0,t)^n} f(x, u(x)+Yx, Du(x)+Y) \, dx; u \in W^{1,p}_0((0,t)^m, \R^N)\Bigg\} \nonumber
\end{equation}
for all $Y \in \R^{N \times m}$.
\end{satz}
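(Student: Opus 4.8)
The plan is to feed Lemma \ref{prop154} and Lemma \ref{prop155} into the Urysohn characterization of $\Gamma$-convergence; the only substantive new step is identifying every subsequential $\Gamma$-limit integrand produced by Lemma \ref{prop154} with the cell-formula function $f_{hom}$ of Lemma \ref{prop155}. Fix a bounded open set $\Omega \subset \R^m$ and let $F_{\varepsilon}: L^p(\Omega, \R^N) \to [0, \infty]$ be $F_{\varepsilon}(u) = \int_{\Omega} f(x/\varepsilon, u(x)/\varepsilon, Du(x))\,dx$ for $u \in W^{1,p}(\Omega, \R^N)$ and $+\infty$ otherwise. Since a family $\Gamma$-converges if and only if every subsequence has a further subsequence $\Gamma$-converging to the same limit (see \cite{braidef}, \cite{maso}), it suffices to show that for every $\varepsilon_j \to 0$ there is a subsequence along which $F_{\varepsilon_j}$ $\Gamma(L^p)$-converges to $F(u) := \int_{\Omega} f_{hom}(Du)\,dx$ (with $F(u) = +\infty$ off $W^{1,p}(\Omega,\R^N)$), where $f_{hom}$ is the function from Lemma \ref{prop155}. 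Given $\varepsilon_j \to 0$, Lemma \ref{prop154} yields a subsequence $\varepsilon_{j_k}$ and a quasi-convex, $x$-independent integrand $\varphi: \R^{N \times m} \to \R$, the same for every bounded open set, with $\int_{\Omega}\varphi(Du)\,dx$ as the $\Gamma$-limit; its proof in fact provides this representation for all $u \in W^{1,p}(\Omega, \R^N)$ through a Carath\'eodory integrand later seen to be $x$-free and quasi-convex. Everything thus reduces to the equality $\varphi = f_{hom}$, which simultaneously makes the limit subsequence-independent and delivers both the quasi-convexity claim and the announced formula.

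To identify $\varphi(Y)$ for fixed $Y \in \R^{N \times m}$, I would localize on the unit cube $Q = (0,1)^m$ and test at the affine map $u_Y(x) = Yx$, so that $\varphi(Y) = \int_Q \varphi(Du_Y)\,dx$ equals the value at $u_Y$ of the $\Gamma$-limit $F(\cdot, Q)$ of $F_{\varepsilon_{j_k}}(\cdot, Q)$. For $\varphi(Y) \le f_{hom}(Y)$: for each fixed large $t$ choose $u_t \in W^{1,p}_0((0,t)^m, \R^N)$ almost optimal for $g_t$ as in \eqref{Steffi12}, reproduce for $s = 1/\varepsilon_{j_k}$ the competitor $u_s \in W^{1,p}_0((0,s)^m, \R^N)$ built in the proof of Lemma \ref{prop155} (translated, lattice-corrected copies of $u_t$ on a grid of cells of side $\approx t$, the corrections $\sigma_z, \lambda_z$ making the $[0,1]^m$- and $[0,1]^N$-periodicities of $f$ applicable), and put $u_k(x) := Yx + \varepsilon_{j_k}u_s(x/\varepsilon_{j_k})$ on $Q$; then $u_k - u_Y \in W^{1,p}_0(Q)$, a change of variables identifies $F_{\varepsilon_{j_k}}(u_k, Q)$ with the normalized cell integral estimated in that proof, and $\lVert u_k - u_Y\rVert_{L^p(Q)}^p = \varepsilon_{j_k}^p \, s^{-m}\lVert u_s\rVert_{L^p((0,s)^m)}^p = O(\varepsilon_{j_k}^p) \to 0$, so $u_k \to u_Y$ in $L^p(Q)$. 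Hence $\varphi(Y) = F(u_Y, Q) \le \limsup_k F_{\varepsilon_{j_k}}(u_k, Q) \le \beta(t)$, where $\beta(t)$ is the $s \to \infty$ limit of the right-hand bound in that proof; taking $\liminf_{t\to\infty}$ and invoking the final computation of the proof of Lemma \ref{prop155} gives $\varphi(Y) \le \lim_t g_t = f_{hom}(Y)$. For $\varphi(Y) \ge f_{hom}(Y)$: take a recovery sequence $u_k \to u_Y$ in $L^p(Q)$, whose gradients are $L^p$-bounded by the coercivity $c_1 \lvert A \rvert^p \le f$; for $\delta > 0$ apply the De Giorgi cut-off (fundamental estimate) interpolating between $u_k$ and $u_Y$ across one of $L$ nested thin shells near $\partial Q$, chosen by pigeonhole to carry little gradient energy, to obtain $\hat u_k$ with $\hat u_k - u_Y \in W^{1,p}_0(Q)$ and $F_{\varepsilon_{j_k}}(\hat u_k, Q) \le F_{\varepsilon_{j_k}}(u_k, Q) + C/L + C\delta(1 + \lvert Y \rvert^p) + o_k(1)$, using only the growth bound $f \le c_2(1 + \lvert A \rvert^p)$; rescaling $Q$ by $1/\varepsilon_{j_k}$ turns $\hat u_k - u_Y$ into an admissible test function for $g_{1/\varepsilon_{j_k}}$, so $F_{\varepsilon_{j_k}}(\hat u_k, Q) \ge g_{1/\varepsilon_{j_k}}$; letting $k \to \infty$, then $L \to \infty$, then $\delta \to 0$ gives $\varphi(Y) \ge \lim_t g_t = f_{hom}(Y)$.

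Having established $\varphi = f_{hom}$ for every subsequential limit, the extracted subsequence satisfies $F_{\varepsilon_{j_k}} \to F$ in $\Gamma(L^p)$ with $F$ as above, independently of the choices; the Urysohn property then upgrades this to $\Gamma$-convergence of the whole family $F_{\varepsilon}$, and since $\Omega$ was an arbitrary bounded open set and $f_{hom}$ does not depend on it, the theorem follows, with quasi-convexity of $f_{hom}$ coming from Lemma \ref{prop154} and the cell formula from Lemma \ref{prop155}. I expect the main obstacle to be the upper-bound construction: in contrast with the uniformly almost periodic case of \cite{braidef}, no single rescaled cell problem works because the periods of $f$ in $x$ and in $s$ need not be commensurable with the lattice generated by $Y$ --- precisely the obstruction illustrated by the example with $Y = \begin{pmatrix} 1 & 1 \\ \sqrt{2} & \sqrt{2} \end{pmatrix}$ --- so one must run the delicate grid-tiling with integer corrections from Lemma \ref{prop155}, control the buffer-zone errors, and keep track of the nested limits $k \to \infty$, $t \to \infty$ (and $\delta, L$ in the lower bound).
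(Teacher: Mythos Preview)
Your argument is correct, but the paper's route to $\varphi=f_{hom}$ is different and shorter. Instead of building explicit test sequences for each inequality, the paper fixes $Y$, works on $Q=(0,1)^m$, and introduces the Dirichlet-constrained functionals $G_{\varepsilon_{j_k}}^{Yx}(u)$ (equal to $F_{\varepsilon_{j_k}}(u,Q)$ if $u-Yx\in W^{1,p}_0(Q)$ and $+\infty$ otherwise). By \cite[Proposition~11.7]{braidef} these $\Gamma$-converge to the correspondingly constrained version of $\int_Q\varphi(Du)\,dx$, and by \cite[Theorem~7.2]{braidef} the minima converge; so
\[
\min_{u\in W^{1,p}_0(Q)}\int_Q \varphi(Du+Y)\,dx
=\lim_{k\to\infty}\inf_{u\in W^{1,p}_0(Q)}\int_Q f\Big(\tfrac{x}{\varepsilon_{j_k}},\tfrac{u+Yx}{\varepsilon_{j_k}},Du+Y\Big)\,dx.
\]
The left side is $\varphi(Y)$ by $W^{1,p}$-quasi-convexity (\cite[Remark~5.15]{braidef}), and after the change of variables $y=x/\varepsilon_{j_k}$ the right side is $\lim_k g_{1/\varepsilon_{j_k}}$, which by Lemma~\ref{prop155} equals $f_{hom}(Y)$. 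Your approach replaces these three citations by hands-on arguments: the grid-tiling of Lemma~\ref{prop155} for $\varphi(Y)\le f_{hom}(Y)$ and a De~Giorgi cut-off for $\varphi(Y)\ge f_{hom}(Y)$. This is more self-contained and makes the role of the periodicity corrections explicit, at the cost of redoing work that is packaged in \cite{braidef}; the paper's proof, in turn, gets both inequalities at once from convergence of minima under $\Gamma$-convergence and avoids the fundamental estimate altogether.
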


\begin{proof}
 Let $\Omega$ be a bounded open subset of $\R^m$ and $(\varepsilon_j)$ a sequence of positive real numbers converging to $0$. By Lemma \ref{prop154} there exist a subsequence $(\varepsilon_{j_k})$ and a quasi-convex function $\varphi: \R^{N \times m} \rightarrow \R$ such that the $\Gamma$-limit
\begin{equation}
 \Gamma(L^p(\Omega, \R^N))-\lim\limits_{k \rightarrow \infty} \int\limits_{\Omega} f \bigg(\frac{x}{\varepsilon_{j_k}}, \frac{u(x)}{\varepsilon_{j_k}}, Du(x) \bigg) \, dx=\int\limits_{\Omega} \varphi(Du(x)) \, dx \nonumber
\end{equation}
exists for every $u \in W^{1,p}(\Omega, \R^N)$. 
Now fix an arbitrary $Y \in \R^{N \times m}$, define $\Omega:=(0,1)^m$, $G_{\varepsilon_{j_k}}^{Yx}(u): L^p(\Omeg, \R^N) \rightarrow [0,\infty]$ by
\begin{equation}
G_{\varepsilon_{j_k}}^{Yx}(u):= \begin{cases}
    \displaystyle\int\limits_{\Omeg} f\bigg(\frac{x}{\varepsilon_{j_k}}, \frac{u(x)}{\varepsilon_{j_k}}, Du(x)\bigg) \, dx & \text{if }u-Yx \in W^{1,p}_0(\Omeg, \R^N) \\
    +\infty & \text{otherwise}                              
                                \end{cases}    \nonumber
\end{equation}
and $\psi(u):=\Gamma(L^p(\Omeg, \R^N))-\lim\limits_{k \rightarrow \infty} G_{\varepsilon_{j_k}}^{Yx}(u)$. The existence of this $\Gamma$-limit is granted by \cite[Proposition 11.7]{braidef}.\\
Let $T$ be the trace operator, let $u \in W^{1,p}(\Omeg, \R^N)$ so that $T(u-Yx) \neq 0$ and let $u_{\varepsilon_{j_k}}$ be a sequence in $W^{1,p}(\Omeg, \R^N)$ that converges to $u$ in $L^p(\Omeg, \R^N)$. Then there exists no subsequence $u_{\varepsilon_{j_{k_l}}} \in W^{1,p}_0(\Omeg, \R^N)+Yx$ because otherwise, by the continuity and the linearity of the trace operator,
\begin{eqnarray}
 0\!=\!\lim\limits_{l \rightarrow \infty} \left\Vert T(u_{\varepsilon_{j_{k_l}}}\!-\!u) \right\Vert_{L^p}\!=\!\lim\limits_{l \rightarrow \infty} \left\Vert T(u_{\varepsilon_{j_{k_l}}}\!-\!Yx)\!-\!T(u\!-\!Yx) \right\Vert_{L^p} \!=\!\left\Vert T(u\!-\!Yx) \right\Vert_{L^p}\!>\!0 \nonumber
\end{eqnarray}
would hold. That implies that
\begin{equation}
 \psi(u)\geq \limsup\limits_{k \rightarrow \infty} G_{\varepsilon_{j_k}}^{Yx}(u_{\varepsilon_{j_k}})=\infty \label{eins}
\end{equation}
for a recovery sequence $u_{\varepsilon_{j_k}}$. Now let $u \in W^{1,p}_0(\Omeg, \R^N)+Yx$. Then by \cite[Proposition 11.7]{braidef} 
\begin{align*}
\int\limits_{\Omeg} \varphi(Du(x)) dx &= \Gamma(L^p(\Omeg, \R^N))-\lim\limits_{k \rightarrow \infty} \int\limits_{\Omeg} f \bigg(\frac{x}{\varepsilon_{j_k}},\frac{u(x)}{\varepsilon_{j_k}}, Du(x)\bigg) dx 
\end{align*}
which equals the $\Gamma-\lim G_{\varepsilon_{j_k}}^{Yx}(u)=\psi(u)$. By that and ($\ref{eins}$) we get
\begin{eqnarray}
 &&\min\left\{\psi(u); u \in L^p(\Omeg, \R^N) \right\}=\min\left\{\psi(u); u \in \Wn+Yx \right\} \nonumber\\
&=& \min\Bigg\{\I{\varphi(Du(x))}; u \in \Wn+Yx\Bigg\} \nonumber\\
&=&\min\Bigg\{\I{\varphi(Du(x)+Y)}; u \in \Wn\Bigg\}. \label{zwei}
\end{eqnarray}
On the other hand, by \cite[Theorem 7.2]{braidef}
\begin{eqnarray}
 &&\min\left\{\psi(u); u \in L^p(\Omeg, \R^N) \right\}=\lim\limits_{k \rightarrow \infty} \inf \left\{G_{\varepsilon_{j_k}}^{Yx}(u); u \in L^p(\Omeg, \R^N) \right\} \nonumber\\
&=&\lim\limits_{k \rightarrow \infty} \inf \left\{G_{\varepsilon_{j_k}}^{Yx}(u); u \in \Wn+Yx \right\} \nonumber\\
&=&\lim\limits_{k \rightarrow \infty} \inf \Bigg\{\I{f \left(\frac{x}{\varepsilon_{j_k}}, \frac{u(x)}{\varepsilon_{j_k}}, Du(x)\right)}; u \in \Wn+Yx \Bigg\} \nonumber\\
&=&\lim\limits_{k \rightarrow \infty} \inf \Bigg\{\I{f \left(\frac{x}{\varepsilon_{j_k}}, \frac{u(x)+Yx}{\varepsilon_{j_k}}, Du(x)+Y\right)}; u \in \Wn \Bigg\}. \nonumber
\end{eqnarray}
By that and ($\ref{zwei}$) we get
\begin{align}
 &\min\left\{\I{\varphi(Du(x)+Y)}; u \in \Wn\right\} \nonumber\\
=&\lim\limits_{k \rightarrow \infty} \inf \left\{\I{f \left(\frac{x}{\varepsilon_{j_k}}, \frac{u(x)+Yx}{\varepsilon_{j_k}}, Du(x)+Y\right)}; u \in \Wn \right\}. \label{drei}
\end{align}
By the quasi-convexity of $\varphi$ and \cite[Remark 5.15]{braidef} $\varphi$ is $W^{1,p}$-quasi-convex and therefore the left-hand side equals $\varphi(Y)$, so, by substituting $y=x/\varepsilon_{j_k}$ in the integral on the right hand side and defining $T_k:=1/\varepsilon_{j_k}$ and $v(y):=T_k u(y/T_k)$, the equation can be written as
\begin{eqnarray}
\varphi(Y)=\lim\limits_{k \rightarrow \infty} \inf \Bigg\{ \frac{1}{T_k^m} \int\limits_{\left(0,T_k \right)^m} f\left(y,Yy+v(y),Y+Dv(y)\right) \text{ } dy; v \in W^{1,p}_0((0,T_k)^m, \R^N) \Bigg\}. \nonumber
\end{eqnarray}
By Lemma \ref{prop155} the limit
\begin{eqnarray}
&&f_{hom}(Y)=\lim\limits_{t \rightarrow \infty} \inf \left\{\frac{1}{t^m} \int\limits_{(0,t)^m} f(x, u(x)+Yx, Du(x)+Y)dx; u \in W^{1,p}_0((0,t)^m; \R^N)\right\} \nonumber \\
&=&\lim\limits_{k \rightarrow \infty} \inf \left\{ \frac{1}{T_k^m} \int\limits_{\left(0,T_k \right)^m} f\left(y,Yy+v(y),Y+Dv(y)\right) \text{ } dy; v \in W^{1,p}_0((0,T_k)^m, \R^N) \right\} \nonumber
\end{eqnarray}
exists for every $Y \in \R^{N \times m}$.

\end{proof}

In \cite{paper}, the Homogenization Theorem is used to prove the density of Riemannian metrics in the space of all Finsler metrics, which is not possible in higher dimensions, as we will see in the following sections.

\section{Counterexamples for the $\Gamma$-Density of Dirichlet Energies with Euclidean Domain or Target}\label{Counterexamplessimp}

We will start this section by defining some classes of metrics and functionals. Later, we will see that those classes can not be approximated by certain classes of Riemannian metrics. From now on, $\Omega$ will always denote a bounded open subset of $\R^m$ and $\mathcal{A}(\Omega)$ will denote the set of all open subsets of $\Omega$.

\begin{definition}
 For $0< c_1 \leq c_2$ we define $\mathcal{E}_{c_1 c_2}(\Omega)$ as the set of all energy functionals of Finsler metrics controlled from above and below respectively by $c_2$ and $c_1$ times the Euclidean norm $\lvert \cdot \rvert$ on $\Omega$, i.e. every $L \in \mathcal{E}_{c_1 c_2}(\Omega)$ can be written as $L(u)=\int\limits_{\Omega} \varphi(u(x),Du(x)) \, dx$ for every $u \in W^{1,2}(\Omega, \R^N)$ where $\varphi: \R^N \times \R^{N \times m} \rightarrow [0, \infty)$ satisfies the following conditions:
\begin{itemize}
 \item $s \mapsto \varphi(s,A) \text{ is lower semicontinuous for all } A \in \R^{N \times m}$,
 \item $A \mapsto \varphi(s,A) \text{ is convex and }2-\text{homogeneous for all } s \in \R^N$,
 \item $c_1 \lvert A \rvert^2 \leq \varphi(s,A) \leq c_2 \lvert A \rvert^2 \text{ for all }(s,A) \in \R^N \times \R^{N \times m}$.
\end{itemize}
Furthermore, define $\mathcal{E}(\Omega):= \bigcup\limits_{c_1, c_2 \geq 0} \mathcal{E}_{c_1 c_2}(\Omega)$.
\end{definition}

\begin{definition}
 We define $\mathcal{R}(\Omega)$ as the set of all energy functionals of Riemannian metrics in $\mathcal{E}(\Omega)$, i.e. $L \in \mathcal{R}(\Omega)$ if and only if $L \in \mathcal{E}(\Omega)$ and the integrand $\varphi$ can be expressed as $\varphi(s,A)=b_{ij}(s) A^i_{\alpha} A^j_{\alpha}$ for a coefficient matrix $(b_{ij})_{i,j \in \{1,..,N\}}$. The energy functional is called isotropic if the integrand $\varphi$ satisfies $\varphi(s,A)= b(s) \lvert A \rvert^2$, i.e. $b_{ij}(s)=b(s) \delta_{ij}$, and we denote the set of all isotropic energy functionals in $\mathcal{R}(\Omega)$ by $\mathcal{R}_I(\Omega)$.
\end{definition}

\begin{definition}
 Let $\Omega \subset \R^2$. We define $\mathcal{C}(\Omega)$ as the set of all Cartan functionals $L(u)=\int\limits_{\Omega} \Phi(u(x), Du_1(x) \wedge Du_2(x)) \, dx$ for all $u \in W^{1,2}(\Omega, \R^3)$ where the parametric integrand\\ $\Phi: \R^3 \times \R^{3} \rightarrow [0, \infty)$ satisfies the following conditions:
 \begin{itemize}
  \item $\Phi(s,tz)=t \Phi(s,z) \text{ for all } (s,z) \in \R^3 \times \R^3, t>0$,
  \item $m_1 \lvert z \rvert \leq \Phi(s,z) \leq m_2 \lvert z \rvert \text{ for some } 0<m_1 \leq m_2 \text{ and for all } (s,z) \in \R^3 \times \R^3$,
  \item $z \mapsto \Phi(s,z) \text{ is convex for all } s \in \R^3$.
 \end{itemize}
\end{definition}

\begin{definition}\label{assoclagr}
 Let $L(u)=\int\limits_{\Omega} \Phi(u(x), Du_1(x) \wedge Du_2(x)) \, dx \in \mathcal{C}(\Omega)$. We call $f(s,A):=\Phi(s,A_1 \wedge A_2)$ for $A=(A_1, A_2) \in \R^3 \times \R^3$ the associated Lagrangian of the parametric integrand $\Phi$.
\end{definition}

Now we will recall the definition of dominance functions (see e.g. \cite{Hildebrandt}).

\begin{definition}\label{domfunct}
 Let $\Phi(s, z)$ be the parametric integrand of a functional $L \in \mathcal{C}(\Omega)$ with the associated Lagrangian $f(s,A)$. Then a function $g: \R^3 \times \R^{3 \times 2}$ is said to be a dominance function for $\Phi$ if it is continuous and satisfies the following conditions:
\begin{itemize}
 \item $f(s,A)\leq g(s,A) \text{ for any } (s,A) \in \R^3 \times \R^{3 \times 2}$,
 \item $f(s,A)=g(s,A) \text{ if and only if } \lvert A_1 \rvert^2=\lvert A_2 \rvert^2 \text{ and } A_1 \cdot A_2=0$.
\end{itemize}
A dominance function $g \in C^0(\R^3 \times \R^{3 \times 2}) \cap C^2(\R^3 \times (\R^{3 \times 2} \setminus \{0\}))$ is called perfect if it satisfies the following conditions:
\begin{itemize}
 \item $g(s,tA)=t^2 g(s,A) \text{ for all } t>0, (s,A) \in \R^3 \times \R^{3 \times 2}$,
 \item $\exists \, \mu_1, \mu_2 \in \R, 0< \mu_1 \leq \mu_2 \text{ so that } \mu_1 \lvert A \rvert^2 \leq g(s,A) \leq \mu_2 \lvert A \rvert^2 \text{ for all } (s,A) \in \R^3 \times \R^{3 \times 2}$,
 \item $\text{for any } R_0>0 \text{ there is a constant } \lambda_g(R_0) \text{ such that } \xi^T g_{AA}(s,A) \xi \geq \lambda_g(R_0) \lvert \xi \rvert^2 \\
 \text{for all } \lvert s \rvert \leq R_0 \text{ and } A, \xi \in \R^{3 \times 2}, A \neq 0$.
\end{itemize}

\end{definition}

\begin{definition}
 Let $L(u)=\int\limits_{\Omega} \Phi(u(x), Du_1(x) \wedge Du_2(x)) \, dx \in \mathcal{C}(\Omega)$ and let $g$ be a dominance function for $\Phi$. We call $G(u):=\int\limits_{\Omega} g(u(x), Du(x)) \, dx$ a dominance functional for $L$. If $g$ is a perfect dominance function for $\Phi$, we call $G$ a perfect dominance functional of $L$.
\end{definition}

\begin{satz}\label{counteriso}
Not every functional $L \in \mathcal{E}(\Omega)$ can be approximated by elements of $\mathcal{R}_I(\Omega)$.
\end{satz}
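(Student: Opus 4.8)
The plan is to produce a single anisotropic Finsler energy $L\in\mathcal E(\Omega)$ that is not a $\Gamma$-limit of isotropic Riemannian energies, the obstruction being a hidden rotational symmetry. Observe first that an isotropic energy $\int_\Omega b(u)\lvert Du\rvert^2\,dx\in\mathcal R_I(\Omega)$ is unchanged if $u$ is replaced by $u\circ\Theta$ for any rotation $\Theta$ of a ball onto itself: the Frobenius norm satisfies $\lvert D(u\circ\Theta)\rvert=\lvert (Du\circ\Theta)\,D\Theta\rvert=\lvert Du\circ\Theta\rvert$, since it is insensitive to the orthogonal factor $D\Theta$, and the coefficient $b(u(\cdot))$ is merely transported along, so a unit-Jacobian change of variables restores the original integral. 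As this invariance passes to $\Gamma$-limits, the integrand $\psi(s,\cdot)$ of any $\Gamma$-limit of $\mathcal R_I(\Omega)$-functionals must satisfy $\psi(s,AR)=\psi(s,A)$ for every $R\in O(m)$; hence it suffices to exhibit a Finsler integrand that is not right-$O(m)$-invariant. Take $m\ge 2$ (the case $m=1$ being covered by the density result of \cite{paper}) and put
\[
 \varphi(A):=\lvert A\rvert^2+\lvert A e_1\rvert^2,\qquad L(u):=\int_\Omega\varphi(Du(x))\,dx ,
\]
with $e_1$ the first standard basis vector of $\R^m$. Since $\lvert Ae_1\rvert\le\lvert A\rvert$ we have $\lvert A\rvert^2\le\varphi(A)\le 2\lvert A\rvert^2$, $\varphi$ is a positive quadratic form in $A$, hence convex and $2$-homogeneous, and it is trivially lower semicontinuous in $s$; so $L\in\mathcal E_{1,2}(\Omega)\subset\mathcal E(\Omega)$. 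But $\varphi(e_1\otimes e_1)=2$ while $\varphi\bigl((e_1\otimes e_1)R\bigr)=\varphi(e_1\otimes e_2)=1$ for the $R\in O(m)$ swapping the first two coordinates, so $\varphi$ is not right-$O(m)$-invariant.

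To turn this into a proof, suppose for contradiction that $L^n(u)=\int_\Omega b^n(u(x))\lvert Du(x)\rvert^2\,dx$ lie in $\mathcal R_I(\Omega)$ with $L^n\xrightarrow{\Gamma}L$ in $L^2(\Omega,\R^N)$. Localize $L^n$ to $W^{1,2}(\Omega,\R^N)\times\mathcal A(\Omega)$ and, exactly as in the proof of Lemma \ref{prop154}, pass to a subsequence (not relabelled) along which $F(u,U):=\Gamma(L^2(\Omega,\R^N))\text{-}\lim_n L^n(u,U)$ exists for every $U\in\mathcal A(\Omega)$, $F(u,\cdot)$ is the restriction of a Borel measure, and $F(\cdot,\Omega)=L$. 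By an integral representation theorem for such $\Gamma$-limits (the tool used in Lemma \ref{prop154}; see \cite[Chapter 9]{braidef} or \cite{maso}, in the version permitting dependence on $u$), $F(u,U)=\int_U f_\infty(x,u(x),Du(x))\,dx$ for a Carath\'eodory integrand $f_\infty$, quasi-convex in its last variable, and comparison with $F(\cdot,\Omega)=\int_\Omega\varphi(Du)\,dx$ forces $f_\infty(x,s,A)=\varphi(A)$ for a.e.\ $(x,s)$ and every $A$. Now fix a ball $B_\rho(x_0)\subset\subset\Omega$ and, for $R\in O(m)$, the rotation $\Theta_R(x):=x_0+R(x-x_0)$ of $B_\rho(x_0)$ onto itself. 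The map $v\mapsto v\circ\Theta_R$ is a linear isometry of $L^2(B_\rho(x_0),\R^N)$ leaving every $L^n(\cdot,B_\rho(x_0))$ invariant by the computation above, hence it leaves $F(\cdot,B_\rho(x_0))$ invariant; applied to the affine map $v(x)=s_0+A_0(x-x_0)$, whose image under $v\mapsto v\circ\Theta_R$ is $s_0+A_0R(x-x_0)$, this gives
\[
 \int_{B_\rho(x_0)}f_\infty\bigl(x,s_0+A_0(x-x_0),A_0\bigr)\,dx=\int_{B_\rho(x_0)}f_\infty\bigl(x,s_0+A_0R(x-x_0),A_0R\bigr)\,dx .
\]

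Dividing by $\lvert B_\rho(x_0)\rvert$ and letting $\rho\to 0$, Lebesgue differentiation together with the continuity of $f_\infty$ in its second argument yields $f_\infty(x_0,s_0,A_0)=f_\infty(x_0,s_0,A_0R)$ for a.e.\ $x_0$, whence, using $f_\infty=\varphi$ a.e., $\varphi(A_0)=\varphi(A_0R)$ for all $A_0\in\R^{N\times m}$ and $R\in O(m)$ — contradicting $\varphi(e_1\otimes e_1)=2\ne 1=\varphi(e_1\otimes e_2)$, which proves the theorem. I expect the only genuine difficulty to be the localization step, i.e.\ establishing the Borel-measure property and the $u$-dependent integral representation of $F$ in this setting; in particular, since \cite[Proposition 12.3]{braidef} is stated under growth bounds uniform in $n$, one should first reduce to the case in which the coefficients $b^n$ are uniformly bounded away from $0$ and $\infty$ (or adapt the compactness and representation results accordingly). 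Once that is done, the rotational invariance of the isotropic energies on balls and the passage to the limit are routine.
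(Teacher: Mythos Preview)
Your core observation --- that isotropic energies $\int_\Omega b(u)\lvert Du\rvert^2\,dx$ are invariant under precomposition with domain rotations, and hence so is any $\Gamma$-limit --- is exactly the idea behind the paper's proof. But the paper exploits it far more directly, and thereby avoids the gap you flag at the end.

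The paper takes $\Omega=(0,1)^2$, which is invariant under the swap $\sigma:(x_1,x_2)\mapsto(x_2,x_1)$, and two affine maps $u_1,u_2$ with $u_1=u_2\circ\sigma$ and $L(u_2)<L(u_1)$. Given a recovery sequence $u_2^n$ for $u_2$, the sequence $u_1^n:=u_2^n\circ\sigma$ converges to $u_1$ in $L^2$, and one checks in one line (change of variables plus $\lvert A\sigma\rvert=\lvert A\rvert$) that $L^n(u_1^n)=L^n(u_2^n)$. Then
\[
\limsup_n L^n(u_2^n)\le L(u_2)<L(u_1)\le\liminf_n L^n(u_1^n)=\liminf_n L^n(u_2^n),
\]
a contradiction. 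No localization, no integral representation, no growth hypotheses on the $b^n$ beyond membership in $\mathcal R_I(\Omega)$.

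Your route, by contrast, goes through \cite[Proposition 12.3]{braidef} and a $u$-dependent representation theorem applied on balls $B_\rho(x_0)\subset\subset\Omega$. This is where the acknowledged gap lies: those results require growth bounds uniform in $n$, which are not part of the hypothesis (each $L^n\in\mathcal R_I(\Omega)$ has \emph{some} $c_1^n,c_2^n$, but nothing prevents $c_1^n\to 0$ or $c_2^n\to\infty$). The reduction you propose is not obvious, and the whole detour is unnecessary. The reason you needed it is that you rotate inside a small ball rather than choosing $\Omega$ itself to be invariant under the relevant $R\in O(m)$. If you had taken $\Omega$ to be a ball (or, as the paper does, a cube together with a coordinate swap), the invariance $L^n(v\circ\Theta_R)=L^n(v)$ holds on all of $\Omega$, passes to the $\Gamma$-limit $L$ directly, and evaluating at an affine $v$ yields $\varphi(A_0)=\varphi(A_0R)$ without any localization. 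Your counterexample $\varphi(A)=\lvert A\rvert^2+\lvert Ae_1\rvert^2$ would then work immediately.
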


\begin{proof}
 From now on $e_i$ will denote the $i$th unit vector. Let $m=2, N=3, \Omega=(0,1)^2$ and $L$ be the energy functional of a function $\varphi$ satisfying $\varphi(s, (e_1 | e_2))> \varphi(s, (e_2 | e_1))$ for all $s \in \R^3$. Suppose there exists a sequence of Riemannian coefficients $b_n$ such that $\int\limits_{(0,1)^2} b_n(u(x)) \lvert Du(x) \rvert^2 \, dx$ $\Gamma(L^2(\Omega, \R^N))$-converges to $L$. Define $u_1(x):=(x_1, x_2, 0)$ and $u_2(x):=(x_2, x_1, 0)$. Let $u_2^n$ be the recovery sequence for $u_2$ and $u_1^n(x):=u_2^n(x_2, x_1)$. Then by substituting $(x_2, x_1)$ by $x$ and later resubstituting we achieve
\allowdisplaybreaks\begin{eqnarray}
 &&\limsup\limits_{n \rightarrow \infty} \Iz{b_n(u_2^n(x)) \lvert Du_2^n(x) \rvert^2} \leq \Iz{\varphi(u_2(x), Du_2(x))} \nonumber\\
 &<& \Iz{\varphi(u_1(x), Du_1(x))} \leq \liminf\limits_{n \rightarrow \infty} \Iz{b_n(u_1^n(x)) \lvert Du_1^n(x) \rvert^2} \nonumber\\
 &=& \liminf\limits_{n \rightarrow \infty} \Iz{b_n(u_2^n(x)) \left\lvert Du_2^n(x) \left(\begin{matrix} 0 & 1 \\ 1& 0 \end{matrix}\right) \right\rvert^2} = \liminf\limits_{n \rightarrow \infty} \Iz{b_n(u_2^n(x)) \lvert Du_2^n(x)\rvert^2} \nonumber
\end{eqnarray}
which is a contradiction, so there exists no such sequence.
\end{proof}
Such a function $\varphi$ can clearly be a perfect dominance function for a parametric integrand $\Phi$ of a Cartan functional in $\mathcal{C}(\Omega)$, at least if $\Phi(s,z) = \Phi(s, -z)$ is not true for every $(s,z) \in \R^3 \times \R^3$. This motivates the following definition.
\begin{definition}
 A Cartan functional is called even if for the parametric integrand $\Phi$ the equation $\Phi(s,z)=\Phi(s,-z)$ holds for every $(s,z) \in \R^3 \times \R^3$.
\end{definition}
With this definition and Theorem \ref{counteriso} not every dominance functional of a non-even Cartan functional in $\mathcal{C}(\Omega)$ can be approximated by elements of $\mathcal{R}_I(\Omega)$.
To see that not every dominance functional of an even Cartan functional in $\mathcal{C}(\Omega)$ can be approximated by elements of $\mathcal{R}_I(\Omega)$, we will need the following lemma.

\begin{lemma}\label{domodd}
 Let $g$ be a perfect dominance function of an even Cartan functional in $\mathcal{C}(\Omega)$. Then there is a perfect dominance function $\Tilde g$ which satisfies 
\begin{align*}
 \int\limits_{(0,1)^2} \Tilde g\left(A_1 x_1 + A_2 x_2,A \right) \text{ }dx < \int\limits_{(0,1)^2} \Tilde g(A_1 x_1 + A_2 x_2, (A_2 | A_1 )) \text{ }dx
\end{align*}
for a matrix $A \in \R^{N \times m}$.
\end{lemma}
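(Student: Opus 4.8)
The plan is to build $\Tilde g$ from $g$ by "breaking the evenness" in a controlled way, so that the resulting function is still a perfect dominance function of some (necessarily non-even) Cartan functional, but picks up a genuine directional asymmetry between the competitor configurations $A$ and $(A_2\mid A_1)$ appearing in the claimed strict inequality. Concretely, I would first observe that for an even Cartan functional with associated Lagrangian $f(s,A)=\Phi(s,A_1\wedge A_2)$ one has $f(s,(A_1\mid A_2))=f(s,(A_2\mid A_1))$, because swapping columns sends $A_1\wedge A_2$ to $A_2\wedge A_1=-(A_1\wedge A_2)$ and $\Phi(s,\cdot)$ is even; likewise $|A_1|^2=|A_2|^2,\ A_1\cdot A_2=0$ is invariant under the swap. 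So on the conformal cone the swap is an isometry of $g$, and any asymmetry we introduce must live off that cone. The idea is to add to $g$ a small nonnegative perturbation $h(A)$ that vanishes on the conformal cone (to preserve the defining equality case of a dominance function), is $2$-homogeneous, $C^2$ away from $0$, small enough in $C^2$ that the strict convexity/ellipticity and the two-sided quadratic bounds of a perfect dominance function survive (perfectness of $g$ gives a quantitative gap to exploit here), and is itself asymmetric under the column swap in a way that survives integration against the particular affine map $x\mapsto A_1x_1+A_2x_2$.

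Second, I would make the perturbation explicit. A natural choice is $h(A):=\delta\,\bigl(|A_1|^2-|A_2|^2\bigr)\,\eta(A)$ or, to keep it nonnegative, $h(A):=\delta\,\bigl(|A_1|^2-|A_2|^2\bigr)^2\,\psi(A)$ times a bounded $2$-homogeneous weight $\psi$ chosen so that $h$ is \emph{not} swap-symmetric — e.g. $\psi(A)$ built from $A_1\cdot e_1$ versus $A_2\cdot e_1$, or from the sign structure that distinguishes $(A_1\mid A_2)$ from $(A_2\mid A_1)$. One checks: $h\geq 0$ and $h=0$ exactly forces $|A_1|=|A_2|$ (hence, combined with the equality case of $g$, the conformal cone is still exactly the equality set for $\Tilde g:=g+h$); $h$ is $2$-homogeneous and $C^0\cap C^2(\R^{3\times2}\setminus\{0\})$; and $\|h\|$ together with its first and second derivatives on the unit sphere can be made as small as we like by choosing $\delta$ small, so by the perfectness hypotheses on $g$ (uniform lower bound $\lambda_g$ on $g_{AA}$, the quadratic bounds $\mu_1|A|^2\le g\le\mu_2|A|^2$) the function $\Tilde g$ again satisfies all bullet points of a perfect dominance function for a suitable parametric integrand — namely the same $\Phi$, since adding $h$ does not change which $f$ is being dominated.

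Third — and this is where the actual work lies — I must verify that $\delta$ can be chosen so small that $\Tilde g$ stays a perfect dominance function \emph{and} simultaneously not so small that the desired strict integral inequality fails. The point is that the two requirements are decoupled in $\delta$: the perfectness constraints are satisfied for all $\delta\in(0,\delta_0)$ with $\delta_0$ depending only on $g$, while the integral gap is, to leading order, \emph{linear} in $\delta$ with a fixed nonzero coefficient. So I would compute
\[
\int_{(0,1)^2}\!\!\Tilde g\bigl(A_1x_1+A_2x_2,\,(A_2\mid A_1)\bigr)\,dx-\int_{(0,1)^2}\!\!\Tilde g\bigl(A_1x_1+A_2x_2,\,A\bigr)\,dx
\]
and split it as $\bigl(\text{the same expression with }g\bigr)+\delta\cdot\bigl(\text{the same with the weight }h/\delta\bigr)$. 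By the evenness reduction above, the $g$-part need not vanish in general, but I will \emph{choose the matrix $A$} — that is exactly what the statement allows, "for a matrix $A$" — so that the $g$-part is zero (e.g. take $A$ of the form $(A_1\mid A_2)$ with $A_1,A_2$ symmetric under the relevant reflection of the domain, so that the change of variables $x\mapsto(x_2,x_1)$ makes the two $g$-integrals literally equal, mirroring the trick in the proof of Theorem~\ref{counteriso}). With that choice the whole difference equals $\delta$ times a fixed quantity that, by design of the asymmetric weight, is strictly positive; hence the strict inequality holds for every $\delta\in(0,\delta_0)$, and we fix one such $\delta$.

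The main obstacle is the bookkeeping in the third step: arranging a single matrix $A$ (and the associated reflection of $(0,1)^2$) for which the $g$-contribution to the difference genuinely cancels while the $h$-contribution genuinely does not. If a clean symmetric $A$ does not simultaneously kill the $g$-part and expose the $h$-part, the fallback is to keep $\delta$ small enough that the (bounded, $\delta$-independent) $g$-part cannot overwhelm the linear-in-$\delta$ $h$-part only if the $g$-part already has the right sign — which is not guaranteed — so the symmetry cancellation of the $g$-part is really the crux and must be set up carefully. A secondary, more routine obstacle is checking the ellipticity bullet for $\Tilde g$: one needs $\xi^T h_{AA}(s,A)\xi$ to be controlled by $\delta$ uniformly on the unit sphere (it is, since $h$ is $C^2$ there and $2$-homogeneous), so that $\xi^T\Tilde g_{AA}\xi\ge(\lambda_g-C\delta)|\xi|^2>0$; this is a direct estimate using homogeneity to reduce to the compact sphere.
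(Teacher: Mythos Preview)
Your overall plan---perturb $g$ by a small nonnegative $2$-homogeneous $C^2$ bump $h$ vanishing on the conformal cone, so that $\tilde g=g+h$ is still a perfect dominance function of the same $\Phi$---is the right shape and matches the paper's strategy. But the argument has a genuine gap at exactly the point you flag as ``the crux'': controlling the $g$-contribution to the integral difference.

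You try to force the $g$-part to cancel by choosing a symmetric $A$ and using $x\mapsto(x_2,x_1)$. That substitution does \emph{not} make the two $g$-integrals equal for a fixed $A$; what it actually shows is that
\[
\int_{(0,1)^2} g(A_1x_1+A_2x_2,A)\,dx-\int_{(0,1)^2} g(A_1x_1+A_2x_2,(A_2\mid A_1))\,dx
\]
for $A$ equals the \emph{negative} of the same expression for $\tilde A=(A_2\mid A_1)$. The correct use of this identity is a dichotomy: either the desired strict inequality already holds for $g$ at some matrix (then take $\tilde g=g$ and you are done), or the $g$-difference vanishes for \emph{every} $A$. Only after establishing this dichotomy does the perturbation argument go through, because in the second case the $g$-part is identically zero and the $O(\delta)$ contribution from $h$ decides the sign. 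Without the dichotomy, your fallback reasoning fails for the reason you yourself note: the $g$-part is $O(1)$ in $\delta$ and may have the wrong sign, and no symmetry of a single special $A$ rescues this. This is precisely how the paper proceeds.

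A secondary remark on your perturbation: your candidate $h(A)=\delta(|A_1|^2-|A_2|^2)^2\psi(A)$ has a swap-symmetric prefactor, so all the asymmetry must be carried by a degree $-2$ weight $\psi$ that you never construct; and you still need to ensure $h\ge 0$. The paper avoids these issues by building $H$ with support in a cone $B\times B$ chosen so that $B$ contains no pair of orthogonal vectors (hence $H$ vanishes on the conformal cone automatically, without needing a vanishing prefactor), and then exhibits an explicit matrix $D$ with $H(D)<H((D_2\mid D_1))$. Your scheme can be made to work, but the details are not as immediate as you suggest.
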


\begin{proof}
Let again $m=2$, $N=3$, let $\Phi$ be the parametric integrand of an even Cartan functional in $\mathcal{C}(\Omega)$ and let $g$ be a perfect dominance function of $\Phi$. Then either there is a matrix $A \in \R^{3 \times 2}$ such that
\begin{equation}
\int\limits_{(0,1)^2} g\left(A_1 x_1 + A_2 x_2,A \right) \text{ }dx < \int\limits_{(0,1)^2} g(A_1 x_1 + A_2 x_2,(A_2 | A_1)) \text{ }dx \nonumber
\end{equation}
or for every $A \in \R^{3 \times 2}$
\begin{equation}
\int\limits_{(0,1)^2} g\left(A_1 x_1 + A_2 x_2,A \right) \text{ } dx = \int\limits_{(0,1)^2} g(A_1 x_1 + A_2 x_2,(A_2 | A_1)) \text{ }dx \nonumber
\end{equation}
holds, since if the reverse strict inequality were true for a $A \in \R^{3 \times 2}$, then for $\Tilde A=(A_2 | A_1)$ we obtain by the same substitutions as in the proof of Theorem \ref{counteriso} that
\begin{eqnarray}
 \int\limits_{(0,1)^2} g(\Tilde A_1 x_1 + \Tilde A_2 x_2, \Tilde A ) \text{ }dx < \int\limits_{(0,1)^2} g(\Tilde A_1 x_1 + \Tilde A_2 x_2, ( \Tilde A_2 | \Tilde A_1 ) ) \text{ } dx \nonumber
\end{eqnarray}
which is a contradiction to the assumption that there is no $A$ satisfying this inequality. In the second case, we can modify $g$ in the following way: Define 
\begin{equation}
B:=\left\{ \left( \begin{matrix} r \sin \theta \cos \varphi \\  r \sin \theta \sin \varphi \\ r \cos \theta \end{matrix} \right); r\geq 0, \varphi \in \left[-\frac{3\pi}{16},\frac{3\pi}{16}\right], \theta \in \left[\frac{\pi}{32}, \frac{7\pi}{32}\right] \right\} \nonumber 
\end{equation}
and $D:=[0,\infty) \times [\frac{\pi}{32},\frac{7\pi}{32}] \times [-\frac{3\pi}{16},\frac{3\pi}{16}]$. Further, define $F: D \rightarrow B$ by
\begin{equation}
 F(r,\theta,\varphi):=(r \sin \theta \cos \varphi, r \sin \theta \sin \varphi, r \cos \theta )^T \nonumber
\end{equation}
and $g_1$, $g_2: [0, \infty) \times [\frac{\pi}{16},\frac{3\pi}{16}] \times [-\frac{\pi}{5},\frac{\pi}{5}] \rightarrow \R$ by
\begin{equation}
g_1(r,\theta,\varphi):= \left\{
  \begin{array}{ll}
    0 & \text{if }(\theta,\varphi) \in \partial[\frac{\pi}{16},\frac{3\pi}{16}] \times \partial[-\frac{\pi}{5},\frac{\pi}{5}] \text{ or } r=0 \\ 
    r & \text{if }r>0 \text{ and }(\theta,\varphi)=(\frac{\pi}{6},\frac{\pi}{6}) \\
    \text{linearly extended} & \text{otherwise}, \\
  \end{array}
\right. \nonumber\\
\end{equation}

\begin{equation}
g_2(r,\theta,\varphi):= \left\{
  \begin{array}{ll}
    0 & \text{if }(\theta,\varphi) \in \partial[\frac{\pi}{16},\frac{3\pi}{16}] \times \partial[-\frac{\pi}{5},\frac{\pi}{5}] \text{ or } r=0 \\ 
    r & \text{if }r>0 \text{ and }(\theta,\varphi)=(\frac{\pi}{6},0) \\
    \text{linearly extended} & \text{otherwise}. \\
  \end{array}
\right. \nonumber\\
\end{equation}
Note that $F$ is a $C^2$-diffeomorphism for $r>0$. Then mollify $g_1, g_2$ with a mollifier $\eta_{\varepsilon}$. Choose $\varepsilon$ small enough, so that $\supp(\eta_{\varepsilon} \ast g_1) \subset\subset D$, $\supp(\eta_{\varepsilon} \ast g_2) \subset\subset D$ and
\begin{align}
(\eta_{\varepsilon} \ast g_1) \left(\frac{1}{\sqrt{2}}, \frac{\pi}{6},\frac{\pi}{6}\right)>\eta_{\varepsilon} \ast g_2 \left(\frac{1}{\sqrt{2}}, \frac{\pi}{6},\frac{\pi}{6}\right), (\eta_{\varepsilon} \ast g_2) \left(\frac{1}{\sqrt{2}}, \frac{\pi}{6},0\right)>\eta_{\varepsilon} \ast g_1 \left(\frac{1}{\sqrt{2}}, \frac{\pi}{6},0\right). \label{glattung}
\end{align}
Note that $F(\tfrac{1}{\sqrt{2}}, \tfrac{\pi}{6},\tfrac{\pi}{6})=\tfrac{1}{\sqrt{2}} (\frac{\sqrt{3}}{4}, \frac{1}{4}, \frac{\sqrt{3}}{2} )^T$ and $F(\tfrac{1}{\sqrt{2}}, \tfrac{\pi}{6},0)=\tfrac{1}{\sqrt{2}} (\frac{1}{2}, 0, \frac{\sqrt{3}}{2})^T$. Define $h_1, h_2:  B \rightarrow \R$ by $h_1(p):=\lvert p \rvert^k (\eta_{\varepsilon} \ast g_1) \circ F^{-1}(p)$ and $h_2(p):=\lvert p \rvert^k (\eta_{\varepsilon} \ast g_2) \circ F^{-1}(p)$. Here choose $k>2$, so that $h_1, h_2 \in C^2_0 (B)$. Then define $H: B \times B \rightarrow \R$ by
\begin{equation}
 H(A):=\lvert A \rvert^2 ( h_1(\tfrac{A_1}{\lvert A \rvert}) h_2(\tfrac{A_2}{\lvert A \rvert})). \nonumber
\end{equation}
By that definition, $H \in C^2_0((B \times B)\setminus \left\{0 \right\})$ and for $D=\left(\begin{matrix} \frac{1}{2} & \frac{\sqrt{3}}{4} \\[1.1ex]  0  & \frac{1}{4} \\[1.1ex] \frac{\sqrt{3}}{2} & \frac{\sqrt{3}}{2} \end{matrix} \right)$ and $\Tilde D$ being the matrix $D$ with interchanged columns we achieve
\begin{eqnarray}
 &&H(\Tilde D ) = (\tfrac{1}{\sqrt{2}})^k (\eta_{\varepsilon} \ast g_1) (\tfrac{1}{\sqrt{2}}, \tfrac{\pi}{6},\tfrac{\pi}{6}) (\tfrac{1}{\sqrt{2}})^k (\eta_{\varepsilon} \ast g_2) (\tfrac{1}{\sqrt{2}}, \tfrac{\pi}{6},0) \nonumber\\
&\overset{\eqref{glattung}}{>}& (\tfrac{1}{\sqrt{2}})^k (\eta_{\varepsilon} \ast g_2) (\tfrac{1}{\sqrt{2}}, \tfrac{\pi}{6},\tfrac{\pi}{6}) (\tfrac{1}{\sqrt{2}})^k (\eta_{\varepsilon} \ast g_1) (\tfrac{1}{\sqrt{2}}, \tfrac{\pi}{6},0) = H(D). \nonumber
\end{eqnarray}
Obviously, $H$ is quadratic and $0\lvert A \rvert^2 \leq H(A) \leq \lvert A \rvert^2$. Furthermore, since $H \in C^2(\R^3)$, there exists a constant $\lambda_H$ such that
\begin{equation}
 \xi H_{AA}(A) \xi \geq \lambda_H \lvert \xi \rvert^2 \text{ for } A, \xi \in \R^6, A \neq 0. \nonumber
\end{equation}
Thus, since there are no perpendicular vectors in $B$ and obviously $H \geq 0$ everywhere, $\Tilde g:=g+aH$ is still a perfect dominance function of $F$, if we choose $a>0$ small enough, so that $\lambda_g+a\lambda_H>0$, but for $D$ and $\Tilde D$ as above, we have
\begin{eqnarray}
 &&\int\limits_{(0,1)^2} \Tilde g(D_1 x_1 + D_2 x_2, ( D_2 | D_1 ) ) \text{ } dx = \int\limits_{(0,1)^2} g(D_1 x_1 + D_2 x_2, ( D_2 | D_1 ) ) +aH( \Tilde D ) \text{ } dx \nonumber\\
&>& \int\limits_{(0,1)^2} g(D_1 x_1 + D_2 x_2, D )+aH(D ) \text{ } dx =\int\limits_{(0,1)^2} \Tilde g(D_1 x_1 + D_2 x_2,D ) \text{ } dx. \nonumber
\end{eqnarray}
So if there is a perfect dominance function for an even $\Phi$, there always is a perfect dominance function $\Tilde g$ with
\begin{equation}
 \int\limits_{(0,1)^2} \Tilde g(A_1 x_1 + A_2 x_2,A ) \text{ } dx < \int\limits_{(0,1)^2} \Tilde g(A_1 x_1 + A_2 x_2, ( A_2 | A_1 ) ) \text{ } dx \nonumber
\end{equation}
for some $A \in \R^{3 \times 2}$.
\end{proof}

\begin{satz}\label{dom}
Not every perfect dominance functional of an even Cartan functional in $\mathcal{C}(\Omega)$ can be approximated by elements of $\mathcal{R}_I(\Omega)$.
\end{satz}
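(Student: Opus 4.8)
The plan is to bootstrap the statement from Lemma~\ref{domodd} by the coordinate-swap argument used in the proof of Theorem~\ref{counteriso}. First I would fix a perfect dominance function $g$ of an even Cartan functional in $\mathcal{C}(\Omega)$ — such functions exist, e.g.\ $g(s,A)=\tfrac12(\lvert A_1\rvert^2+\lvert A_2\rvert^2)$ is a perfect dominance function of the area functional $\Phi(s,z)=\lvert z\rvert$, which is even — and apply Lemma~\ref{domodd} to obtain a perfect dominance function $\tilde g$ of an even Cartan functional together with a matrix $A=(A_1 | A_2)\in\R^{3\times 2}$ satisfying
\begin{equation}
 \Iz{\tilde g(A_1 x_1 + A_2 x_2, A)} < \Iz{\tilde g(A_1 x_1 + A_2 x_2, (A_2 | A_1))}. \nonumber
\end{equation}
The claim will be that the associated perfect dominance functional $G(u):=\int_{(0,1)^2}\tilde g(u(x),Du(x))\,dx$ on $W^{1,2}((0,1)^2,\R^3)$ is not the $\Gamma(L^2((0,1)^2,\R^3))$-limit of any sequence from $\mathcal{R}_I((0,1)^2)$.

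Next I would set $m=2$, $N=3$, $\Omega=(0,1)^2$ and introduce the affine maps $u_1(x):=A_1 x_1 + A_2 x_2$ and $u_2(x):=A_1 x_2 + A_2 x_1$, so that $Du_1 = (A_1 | A_2)=A$, $Du_2 = (A_2 | A_1)$ and $u_2(x)=u_1(x_2,x_1)$. Since the coordinate swap $x\mapsto(x_2,x_1)$ is a measure-preserving involution of $(0,1)^2$, a change of variables identifies the right-hand side of the inequality above with $G(u_2)$, so that $G(u_1)<G(u_2)$. Now suppose for contradiction that a sequence $b_n$ of Riemannian coefficients makes $L^n(u):=\int_{(0,1)^2} b_n(u(x))\lvert Du(x)\rvert^2\,dx$ $\Gamma$-converge to $G$ in $L^2((0,1)^2,\R^3)$. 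I would pick a recovery sequence $u_1^n$ for $u_1$, so that $u_1^n\to u_1$ in $L^2$ and $\limsup_{n\to\infty} L^n(u_1^n)\le G(u_1)$, and set $u_2^n(x):=u_1^n(x_2,x_1)$. Then $u_2^n\to u_2$ in $L^2$ (the swap is an $L^2$-isometry), and because $Du_2^n(x)=Du_1^n(x_2,x_1)\left(\begin{smallmatrix}0 & 1\\ 1 & 0\end{smallmatrix}\right)$ with an orthogonal permutation matrix one has $\lvert Du_2^n(x)\rvert=\lvert Du_1^n(x_2,x_1)\rvert$; a further change of variables gives $L^n(u_2^n)=L^n(u_1^n)$ for every $n$. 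Applying the $\liminf$-inequality along $u_2^n\to u_2$ then yields
\begin{equation}
 G(u_2)\le\liminf_{n\to\infty} L^n(u_2^n)=\liminf_{n\to\infty} L^n(u_1^n)\le\limsup_{n\to\infty} L^n(u_1^n)\le G(u_1), \nonumber
\end{equation}
contradicting $G(u_1)<G(u_2)$, so no such approximating sequence exists.

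I expect essentially all the substance to sit in Lemma~\ref{domodd}, which manufactures the strictly asymmetric perfect dominance function $\tilde g$; the present theorem just transports that asymmetry through $\Gamma$-convergence. The only steps needing (mild) care are the verification that the map sending $u_1^n(x)$ to $u_1^n(x_2,x_1)$ leaves the isotropic energies $L^n$ unchanged — which is exactly the invariance of $\lvert\cdot\rvert$ on $\R^{3\times 2}$ under right multiplication by $\left(\begin{smallmatrix}0 & 1\\ 1 & 0\end{smallmatrix}\right)$, the feature that distinguishes $\mathcal{R}_I(\Omega)$ from $\mathcal{R}(\Omega)$ — and the observation that the $\tilde g$ returned by Lemma~\ref{domodd} is still a genuine perfect dominance function of an even Cartan functional, so that $G$ really lies in the class named in the statement. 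I do not anticipate any real obstacle beyond Lemma~\ref{domodd} itself.
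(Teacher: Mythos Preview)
Your proposal is correct and follows essentially the same approach as the paper: apply Lemma~\ref{domodd} to obtain an asymmetric perfect dominance function, compare the two affine maps related by the coordinate swap $(x_1,x_2)\mapsto(x_2,x_1)$, and exploit that isotropic energies are invariant under right multiplication of the gradient by the permutation matrix to derive a contradiction from the $\Gamma$-limit inequalities. The only cosmetic difference is that the paper takes the recovery sequence for the map with the smaller energy and swaps to test the larger one, whereas you do the reverse; the argument is identical.
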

 
\begin{proof}
Let $g$ be a perfect dominance function of an even Cartan functional in $\mathcal{C}(\Omega)$. By Lemma \ref{domodd} we get a perfect dominance function $\Tilde g$ which satisfies 
\begin{align*}
 \int\limits_{(0,1)^2} \Tilde g(A_1 x_1 + A_2 x_2,A ) \text{ } dx < \int\limits_{(0,1)^2} \Tilde g(A_1 x_1 + A_2 x_2, ( A_2 | A_1 ) ) \text{ } dx
\end{align*}
for some $A \in \R^{N \times m}$. Now suppose $\Tilde{g}$ could be approximated by elements of $\mathcal{R}_I(\Omega)$ with coefficients $b_n$. Then let $u_1(x):=A_2 x_1 + A_1 x_2$ and $u_2(x):=A_1 x_1 + A_2 x_2$. Let $u_2^n$ be the recovery sequence for $u_2$ and let $u_1^n(x):=u_2^n(x_2, x_1)$. Then again by substituting $(x_2, x_1)$ by $x$ and later resubstituting we achieve
\begin{eqnarray}
 &&\limsup\limits_{n \rightarrow \infty} \int\limits_{(0,1)^2} b_n(u_2^n(x)) \lvert Du_2^n(x) \rvert^2 \text{ } dx \leq \int\limits_{(0,1)^2} \Tilde g(A_1 x_1 + A_2 x_2, A) \text{ } dx \nonumber\\
&<& \int\limits_{(0,1)^2} \Tilde g(A_1 x_1 + A_2 x_2, ( A_2 | A_1 ) ) \text{ } dx = \int\limits_{(0,1)^2} \Tilde g(u_1(x), Du_1(x)) \text{ } dx \nonumber\\
&\leq& \liminf\limits_{n \rightarrow \infty} \int\limits_{(0,1)^2} b_n(u_2^n(x_2,x_1)) \left\lvert Du_2^n(x_2, x_1) \left(\begin{matrix} 0 & 1 \\ 1 & 0 \end{matrix}\right) \right\rvert^2 \text{ } dx = \liminf\limits_{n \rightarrow \infty} \int\limits_{(0,1)^2} b_n(u_2^n(x)) \lvert Du_2^n(x) \rvert^2 \text{ } dx, \nonumber
\end{eqnarray}
which is a contradiction, so not every perfect dominance functional of an even Cartan functional in $\mathcal{C}(\Omega)$ can be approximated by elements of $\mathcal{R}_I(\Omega)$.
 \end{proof}

\begin{satz}\label{cartanwachs}
Let $L: W^{1,2}(\Omega, \R^N) \rightarrow [0, \infty) \in \mathcal{C}(\Omega)$ with parametric integrand $\Phi: \R^3 \times \R^3 \rightarrow [0, \infty)$. If $L$ can be approximated by a sequence of elements of $\mathcal{R}_I(\Omega)$ with coefficients $b_n: \R^N \rightarrow [0, \infty)$, then there exists no $c_1>0$ so that
\begin{equation}
 c_1 \lvert z \rvert^2  \leq b_n(s) \lvert z \rvert^2 \label{Widerspruch}
\end{equation}
for all $(s,z) \in \R^3 \times \R^3$ and all $n \in \N$.
\end{satz}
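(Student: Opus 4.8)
I would argue by contradiction. Suppose $L$ with parametric integrand $\Phi$ is $\Gamma(L^2(\Omega,\R^3))$-approximated by functionals $L^n(u)=\int_\Omega b_n(u(x))\lvert Du(x)\rvert^2\,dx$ for which a uniform bound $b_n(s)\ge c_1>0$ holds for all $s$ and all $n$. The structural fact driving the proof is that a Cartan functional annihilates maps with everywhere rank-one gradient: positive $1$-homogeneity together with $m_1\lvert z\rvert\le\Phi(s,z)\le m_2\lvert z\rvert$ forces $\Phi(s,0)=0$, and if the two components $A_1,A_2\in\R^3$ of $A=(A_1,A_2)$ are linearly dependent then $A_1\wedge A_2=0$, whence the associated Lagrangian satisfies $f(s,A)=\Phi(s,A_1\wedge A_2)=0$.

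Concretely I would fix the affine map $u(x):=(x_1,0,0)$, which belongs to $W^{1,2}(\Omega,\R^3)$ for any bounded open $\Omega\subset\R^2$ and whose gradient has first column $e_1$ and second column $0$, hence rank one; therefore $L(u)=0$, while $u$ is non-constant on every ball contained in $\Omega$. Let $(u_n)$ be a recovery sequence for $u$, so $u_n\to u$ in $L^2(\Omega,\R^3)$, and since $0\le L^n(u_n)$ and $\limsup_n L^n(u_n)\le L(u)=0$ by the limsup-inequality, we get $L^n(u_n)\to 0$. The assumed coercivity then yields $c_1\int_\Omega\lvert Du_n\rvert^2\,dx\le\int_\Omega b_n(u_n)\lvert Du_n\rvert^2\,dx=L^n(u_n)\to 0$, so $Du_n\to 0$ in $L^2(\Omega,\R^{3\times 2})$.

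To close the argument I would pass to the limit in the weak-derivative identity: for every $\phi\in C_c^\infty(\Omega,\R^3)$ and $j\in\{1,2\}$ one has $\int_\Omega u\cdot\partial_j\phi\,dx=\lim_n\int_\Omega u_n\cdot\partial_j\phi\,dx=-\lim_n\int_\Omega\partial_j u_n\cdot\phi\,dx=0$, so the weak gradient of $u$ vanishes and $u$ is constant on each connected component of $\Omega$. This contradicts the choice $u(x)=(x_1,0,0)$, so no such $c_1$ can exist.

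As for difficulty, there is essentially no serious obstacle: the whole point is the observation that a Cartan functional vanishes on rank-one gradients, which forces the recovery energy to tend to zero, while a uniform lower bound $b_n\ge c_1$ would convert this into $Du_n\to 0$ in $L^2$, incompatible with the target being a genuinely non-constant map. The only steps needing a line of care are that $\Gamma$-convergence supplies the recovery sequence inside $W^{1,2}$ and that strong $L^2$-convergence $Du_n\to 0$ transfers to the weak derivative of the $L^2$-limit $u$; both are routine.
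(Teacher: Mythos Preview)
Your proof is correct and follows essentially the same strategy as the paper: pick an affine map with rank-one gradient (you use $u(x)=(x_1,0,0)$, the paper uses $u(x)=(x_1+x_2,x_1+x_2,x_1+x_2)$), observe that $L(u)=0$ since $Du_1\wedge Du_2=0$, and combine the recovery inequality with the assumed bound $b_n\ge c_1$ to force $\lVert Du_n\rVert_{L^2}\to 0$, contradicting $Du\neq 0$. Your endgame---passing directly from strong convergence $Du_n\to 0$ in $L^2$ to $Du=0$ via the weak-derivative identity---is in fact a bit cleaner than the paper's route through weak compactness and weak lower semicontinuity of the norm, but the idea is the same.
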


\begin{proof}
Define $u(x):=(x_1+x_2,x_1+x_2,x_1+x_2)$ and let $u^n$ be the recovery sequence for $u$. Suppose there is $c_1>0$ satisfying (\ref{Widerspruch}). Then
\begin{eqnarray}
 &&0= \int\limits_{\Omega} \Phi(u(x), Du_1(x) \wedge Du_2(x)) \text{ } dx \nonumber\\
 &\geq& \limsup\limits_{n \rightarrow \infty} \int\limits_{\Omega} b_n(u^n(x)) \lvert Du^n(x) \rvert^2 \text{ } dx \geq \limsup\limits_{n \rightarrow \infty} c_1 \int\limits_{\Omega} \lvert Du^n(x) \rvert^2 \text{ } dx. \label{schwach}
\end{eqnarray}
Thus, $Du^n$ is a bounded sequence in $L^2(\Omega, \R^{N \times m})$, so because of the weak compactness of reflexive Banach spaces there exists a $L^2(\Omega, \R^{N \times m})$-weakly converging subsequence $u^{n_k}$. Let $h \in L^2(\Omega, \R^{N \times m})$ be the limit of this subsequence. Then as in the proof of Lemma \ref{prop154}, $h$ is the weak derivative of $u$. Since every weakly convergent subsequence of $(Du^n)$ converges weakly to $Du$, and since every subsequence has a weakly converging subsequence, the whole sequence converges weakly to $Du$ and so we have by the weak lower semicontinuity of norms
\begin{equation}
  \limsup\limits_{n \rightarrow \infty} c_1 \int\limits_{\Omega} \lvert Du^n(x) \rvert^2 \text{ } dx \geq c_1 \int\limits_{\Omega} \lvert Du(x) \rvert^2 \text{ } dx = 6 c_1>0. \nonumber
\end{equation}
Together with (\ref{schwach}), this is a contradiction, so there exists no such $c_1>0$.
\end{proof}

In Theorem \ref{counteriso}, we have seen that not every functional $L \in \mathcal{E}(\Omega)$ can be approximated by elements of $\mathcal{R}_I(\Omega)$. The next theorem shows that not every functional $L \in \mathcal{E}(\Omega)$ can be approximated by elements of $\mathcal{R}(\Omega)$, i.e. that the isotropy is not the reason for which the approximation does not always work.

\begin{satz}\label{bild}
Not every functional $L \in \mathcal{E}(\Omega)$ can be approximated by elements of $\mathcal{R}(\Omega)$.
\end{satz}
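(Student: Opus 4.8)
The plan is to reuse the strategy of the proof of Theorem~\ref{counteriso}, after noticing that that argument exploited only one feature of the approximating energies: their invariance under orthogonal changes of the \emph{domain} variable, a feature which survives when one passes from $\mathcal{R}_I(\Omega)$ to the larger, possibly anisotropic, class $\mathcal{R}(\Omega)$. The algebraic point I would isolate first is that for any symmetric coefficient matrix $(b_{ij})$, any $A\in\R^{N\times m}$ and any $S\in O(m)$,
\[
 b_{ij}\,(AS)^i_\alpha (AS)^j_\alpha \;=\; b_{ij}\sum_\beta A^i_\beta A^j_\beta ,
\]
because $\sum_\alpha S_{\beta\alpha}S_{\gamma\alpha}=(SS^T)_{\beta\gamma}=\delta_{\beta\gamma}$; equivalently, the integrand $b_{ij}(s)A^i_\alpha A^j_\alpha$ of every $L\in\mathcal{R}(\Omega)$ is unchanged when $A$ is replaced by $AS$. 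In particular this applies to the transposition $S=\left(\begin{matrix}0&1\\1&0\end{matrix}\right)$ for $m=2$.

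Next I would fix $m=2$, $N=3$, $\Omega=(0,1)^2$ and pick a $\varphi\in\mathcal{E}(\Omega)\setminus\mathcal{R}(\Omega)$ that distinguishes $(e_1\mid e_2)$ from $(e_2\mid e_1)$, where $e_1,e_2$ are the first two standard basis vectors of $\R^3$. The choice $\varphi(s,A):=\lvert A\rvert^2+\kappa\,(A^1_1)^2$ with a small $\kappa>0$ should work: it is convex and $2$-homogeneous in $A$, independent of (hence lower semicontinuous in) $s$, satisfies $\lvert A\rvert^2\le\varphi(s,A)\le(1+\kappa)\lvert A\rvert^2$, and $\varphi(s,(e_1\mid e_2))=2+\kappa>2=\varphi(s,(e_2\mid e_1))$. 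The extra term $(A^1_1)^2$ singles out one entry of $A$ and therefore cannot be written in the form $b_{ij}(s)A^i_\alpha A^j_\alpha$, which always couples the two columns of $A$ symmetrically, so indeed $\varphi\notin\mathcal{R}(\Omega)$, as the theorem requires.

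Then I would argue by contradiction, mimicking the proof of Theorem~\ref{counteriso}. Suppose the energy $L$ of this $\varphi$ were the $\Gamma(L^2((0,1)^2,\R^3))$-limit of $L_n(u)=\Iz{(b_n)_{ij}(u(x))\,(Du)^i_\alpha(x)(Du)^j_\alpha(x)}$. Put $u_1(x):=(x_1,x_2,0)$ and $u_2(x):=(x_2,x_1,0)$, so that $Du_1(x)=(e_1\mid e_2)$ and $Du_2(x)=(e_2\mid e_1)$ for every $x$, whence $L(u_1)=2+\kappa>2=L(u_2)$. Let $(u_2^n)$ be a recovery sequence for $u_2$ and set $u_1^n(x):=u_2^n(x_2,x_1)$. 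Since $\sigma(x)=(x_2,x_1)$ is a measure-preserving linear bijection of $(0,1)^2$ with $u_1=u_2\circ\sigma$, we get $u_1^n\to u_1$ in $L^2$, and the chain rule gives $Du_1^n(x)=Du_2^n(\sigma(x))\,S$. Combining the invariance from the first step with the substitution $y=\sigma(x)$ yields $L_n(u_1^n)=L_n(u_2^n)$ for every $n$, and therefore
\begin{align*}
 L(u_1)&\le\liminf_{n\to\infty}L_n(u_1^n)=\liminf_{n\to\infty}L_n(u_2^n)\\
 &\le\limsup_{n\to\infty}L_n(u_2^n)\le L(u_2),
\end{align*}
contradicting $L(u_1)>L(u_2)$.

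The remaining work is minor bookkeeping — checking that $u_1^n\in W^{1,2}((0,1)^2,\R^3)$ with gradient $Du_2^n(\sigma(\cdot))\,S$, and that the orthogonal substitution $A\mapsto AS$ truly leaves the quadratic form $(b_n)_{ij}A^i_\alpha A^j_\alpha$ untouched — so I do not expect a serious obstacle. The content of the argument is that the obstruction to approximating elements of $\mathcal{E}(\Omega)$ comes from anisotropy \emph{in the domain}, and not from the isotropy of the target metric that was assumed in Theorem~\ref{counteriso}.
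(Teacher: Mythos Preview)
Your proposal is correct and follows essentially the same route as the paper's proof: the same choice of $m,N,\Omega$, the same test functions $u_1,u_2$ and the same reflected recovery sequence $u_1^n(x)=u_2^n(x_2,x_1)$, with the contradiction coming from the invariance of $b_{ij}(s)A^i_\alpha A^j_\alpha$ under $A\mapsto A\left(\begin{smallmatrix}0&1\\1&0\end{smallmatrix}\right)$. The only cosmetic differences are that you supply an explicit admissible $\varphi$ and isolate the $O(m)$-invariance as a preliminary observation, whereas the paper just assumes some $\varphi$ with $\varphi(s,(e_1|e_2))>\varphi(s,(e_2|e_1))$ and performs the invariance computation inline.
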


\begin{proof}
Let $m=2$, $N=3$, $\Omega=(0,1)^2$ and let $L \in \mathcal{E}(\Omega)$ with an integrand $\varphi$ satisfying $\varphi(s, (e_1 | e_2)) > \varphi(s, (e_2 | e_1) )$ for all $s \in \R^N$. Suppose there exists a sequence of coefficients $b^n$ such that $\int\limits_{(0,1)^2} b^n_{i j} (u(x)) u^i_{\alpha}(x) u^j_{\alpha}(x) \text{ } dx$ $\Gamma(L^2(\Omega, \R^N))$-converges to $L$. Define $u_1(x):=(x_1, x_2, 0)$ and $u_2(x):=(x_2, x_1, 0)$. Let $u_2^n$ be the recovery sequence for $u_2$ and $u_1^n(x):=u_2^n(x_2, x_1)$. Then
\begin{equation}
 Du_1^n (x)= \begin{pmatrix} (u_2^n)^1_2 & (u_2^n)^1_1 \\[1.1ex] (u_2^n)^2_2 & (u_2^n)^2_1 \\[1.1ex] (u_2^n)^3_2 & (u_2^n)^3_1 \end{pmatrix} \begin{pmatrix} x_2 \\ x_1 \end{pmatrix}. \nonumber
\end{equation}
Thus, by the same computations as in the proof of Theorem \ref{counteriso}
\begin{align*}
 &\limsup\limits_{n \rightarrow \infty} \Iz{b^n_{i j} (u_2^n(x)) (u_2^n)^i_{\alpha}(x) (u_2^n)^j_{\alpha}(x)} < \liminf\limits_{n \rightarrow \infty} \Iz{b^n_{i j} (u_1^n(x)) (u_1^n)^i_{\alpha}(x) (u_1^n)^j_{\alpha}(x)} \\
 =& \liminf\limits_{n \rightarrow \infty} \Iz{b^n_{i j} (u_2^n(x_2, x_1)) (u_2^n)^i_{\alpha}\begin{pmatrix} x_2 \\ x_1 \end{pmatrix} (u_2^n)^j_{\alpha}\begin{pmatrix} x_2 \\ x_1 \end{pmatrix}} = \liminf\limits_{n \rightarrow \infty} \Iz{b^n_{i j} (u_2^n(x)) (u_2^n)^i_{\alpha}(x) (u_2^n)^j_{\alpha}(x)} 
\end{align*}
holds, which is a contradiction, so there exists no such sequence. 
\end{proof}

In addition to the Riemannian metrics which are covered by $\mathcal{R}(\Omega)$, we might be interested in the behavior of sequences of energy functionals of maps $u: R_1 \rightarrow R_2$, where both $R_1, R_2$ are Riemannian manifolds, not only $R_2$. The energy functional of such a map $u$ will then, up to a constant factor, be defined by $L(u):=\int\limits_{\Omega} a^{\alpha \beta}(x) b_{ij}(u(x)) u^i_{\alpha}(x) u^j_{\beta}(x) \, dx$, where $a^{\alpha \beta}$ is the inverse of the metric tensor $a_{\alpha \beta}$ of $R_1$ and $b_{ij}$ is the metric tensor of $R_2$ (c.f. \cite[Chapter 8]{Jost}). At first, assume that $R_2$ is the Euclidean space $\R^N$.

\begin{definition}
 We define $\mathcal{I}(\Omega)$ as the set of all energy functionals of a map $u$ mapping from a Riemannian manifold to $\R^N$, i.e. $L \in \mathcal{I}(\Omega)$ if and only if $L(u)=\int\limits_{\Omega} a^{\alpha \beta}(x) u^i_{\alpha}(x) u^i_{\beta}(x) \, dx$ for a metric tensor $a_{\alpha \beta}$ of a Riemannian manifold.
\end{definition}

\begin{satz}\label{urbild}
Not every functional $L \in \mathcal{E}(\Omega)$ can be approximated by elements of $\mathcal{I}(\Omega)$.
\end{satz}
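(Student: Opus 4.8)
My plan is to adapt the argument of Theorem~\ref{counteriso}, but replacing the symmetry of the unit square (which is no longer available, since the coefficients $a^{\alpha\beta}(x)$ of an element of $\mathcal{I}(\Omega)$ genuinely depend on $x$) by the rotational symmetry of the Euclidean \emph{target} $\R^N$. Concretely, take $m=2$, $N=3$, $\Omega=(0,1)^2$, and let $L\in\mathcal{E}(\Omega)$ be the functional with integrand $\varphi(s,A):=\lvert A\rvert^2+(A^1_1)^2$. This $\varphi$ is independent of $s$ (hence lower semicontinuous in $s$), convex and $2$-homogeneous in $A$, and satisfies $\lvert A\rvert^2\le\varphi(s,A)\le 2\lvert A\rvert^2$ because $(A^1_1)^2\le\lvert A\rvert^2$, so $L\in\mathcal{E}_{1,2}(\Omega)\subset\mathcal{E}(\Omega)$. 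The decisive feature of this choice is that $\varphi((e_1\mid e_2))=2+1=3>2=\varphi((e_2\mid e_1))$, i.e. $\varphi$ is not invariant under interchanging the two columns of the gradient.

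The key observation is that every $L\in\mathcal{I}(\Omega)$ is invariant under post-composition with an orthogonal map of the target: for $U\in O(N)$ write $(U\!\circ\!v)(x):=U\,v(x)$; then $D(U\!\circ\!v)=U\,Dv$ pointwise, and since the columns of $U$ are orthonormal, $a^{\alpha\beta}(x)(U\,Dv)^i_{\alpha}(U\,Dv)^i_{\beta}=a^{\alpha\beta}(x)(U^{T}U)_{jk}v^j_{\alpha}v^k_{\beta}=a^{\alpha\beta}(x)v^i_{\alpha}v^i_{\beta}$, so $L(U\!\circ\!v)=L(v)$ for every $v\in W^{1,2}(\Omega,\R^N)$. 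This is exactly the step where it matters that the target is Euclidean, i.e. that there is no $b_{ij}(u)$ in the integrand; for $\mathcal{R}(\Omega)$ this invariance would fail.

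Now suppose, for contradiction, that there is a sequence $L^n\in\mathcal{I}(\Omega)$, with coefficients $a_n^{\alpha\beta}$, that $\Gamma(L^2(\Omega,\R^3))$-converges to $L$. Put $u_1(x):=(x_1,x_2,0)$ and $u_2(x):=(x_2,x_1,0)$, and let $U\in O(3)$ be the permutation matrix interchanging $e_1$ and $e_2$ and fixing $e_3$, so that $U\!\circ\!u_2=u_1$. Let $u_2^n\in W^{1,2}(\Omega,\R^3)$ be a recovery sequence for $u_2$, so $u_2^n\to u_2$ in $L^2$ and $\limsup_n L^n(u_2^n)\le L(u_2)=\Iz{\lvert Du_2\rvert^2+((u_2)^1_1)^2}=\Iz{2}=2$, using $Du_2=(e_2\mid e_1)$ and $(u_2)^1_1=\partial_1(x_2)=0$. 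Set $u_1^n:=U\!\circ\!u_2^n$. Since $U$ is a Euclidean isometry, $u_1^n\to U\!\circ\!u_2=u_1$ in $L^2$, and by the invariance above $L^n(u_1^n)=L^n(u_2^n)$; hence the liminf-inequality for $L^n\to L$ gives $L(u_1)\le\liminf_n L^n(u_1^n)=\liminf_n L^n(u_2^n)\le 2$. But $Du_1=(e_1\mid e_2)$ and $(u_1)^1_1=\partial_1(x_1)=1$, so $L(u_1)=\Iz{\lvert Du_1\rvert^2+((u_1)^1_1)^2}=\Iz{3}=3$, a contradiction. Therefore no such approximating sequence exists.

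I do not expect a serious technical obstacle; the only real content is spotting the correct symmetry. In Theorems~\ref{counteriso} and \ref{bild} one exploits that $(e_2\mid e_1)$ arises from $(e_1\mid e_2)$ by a \emph{right} multiplication by a coordinate swap on the domain, together with the fact that precomposing the recovery sequence with that swap of the square leaves the position-dependence (which there is really a $u(x)$-dependence) intact. Here the $x$-dependence is genuine, so that route is blocked; instead $(e_2\mid e_1)=U(e_1\mid e_2)$ is a \emph{left} multiplication by a coordinate swap on the target, and the Euclidean structure of $\R^N$ makes every $L\in\mathcal{I}(\Omega)$ invariant under $v\mapsto U\!\circ\!v$, which is what lets the recovery sequence for $u_2$ be transported to an admissible competitor for $u_1$ carrying the same (too small) energy.
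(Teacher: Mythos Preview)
Your proof is correct and follows essentially the same route as the paper: both define $u_1(x)=(x_1,x_2,0)$, $u_2(x)=(x_2,x_1,0)$, take a recovery sequence $u_2^n$ for $u_2$, and set $u_1^n:=(e_2\mid e_1\mid e_3)\,u_2^n$, exploiting that every element of $\mathcal{I}(\Omega)$ is invariant under left-multiplication by an orthogonal matrix on the target. The only cosmetic difference is that the paper works with an arbitrary integrand satisfying $\varphi(s,(e_1\mid e_2))>\varphi(s,(e_2\mid e_1))$ for all $s$, whereas you exhibit one concrete such $\varphi$.
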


\begin{proof}
Let $m=2$, $N=3$, $\Omega=(0,1)^2$ and let $L \in \mathcal{E}(\Omega)$ with an integrand $\varphi$ satisfying $\varphi(s, (e_1 | e_2)) > \varphi(s, (e_2 | e_1))$ for all $s \in \R^N$. Suppose there exists a sequence of coefficients $a_n$ such that $\int\limits_{(0,1)^2} a_n^{\alpha \beta} (x) u^i_{\alpha}(x) u^i_{\beta}(x) \, dx$ $\Gamma(L^2(\Omega,$ $\R^N))$-converges to $L$. Define $u_1(x):=(x_1, x_2, 0)$ and $u_2(x):=(x_2, x_1, 0)$. Let $u_2^n$ be the recovery sequence for $u_2$ and $u_1^n(x):=(e_2 | e_1 | e_3) u_2^n(x)$. Then
\begin{equation}
 Du_1^n (x)= \begin{pmatrix} (u_2^n)^2_1 & (u_2^n)^2_2 \\[1.1ex] (u_2^n)^1_1 & (u_2^n)^1_2 \\[1.1ex] (u_2^n)^3_1 & (u_2^n)^3_2 \end{pmatrix}(x). \nonumber
\end{equation}
Thus, by computations analogous to those in the proof of Theorem \ref{counteriso} and then simply changing the order of summation
\allowdisplaybreaks\begin{eqnarray}
 &&\limsup\limits_{n \rightarrow \infty} \Iz{a_n^{\alpha \beta} (x) (u_2^n)^i_{\alpha}(x) (u_2^n)^i_{\beta}(x)} \leq \Iz{\varphi(u_2(x), Du_2(x))} \nonumber\\
 &<& \liminf\limits_{n \rightarrow \infty} \Iz{a_n^{\alpha \beta} (x) (u_1^n)^i_{\alpha}(x) (u_1^n)^i_{\beta}(x)} = \liminf\limits_{n \rightarrow \infty} \Iz{a_n^{\alpha \beta} (x) (u_2^n)^i_{\alpha}(x) (u_2^n)^i_{\beta}(x)} \nonumber
\end{eqnarray}
holds, which is a contradiction, so there exists no such sequence.
 
\end{proof}

We have now seen that an approximation of all functionals in $\mathcal{E}(\Omega)$ with Riemannian energy functionals is not possible if either $R_1$ or $R_2$ is a Euclidean domain. Therefore, in Chapter \ref{Counterexamples} we will discuss the behavior of sequences of energy functionals of a map $u$ mapping from a Riemannian manifold $R_1$ to another Riemannian manifold $R_2$. In the next theorem we will see that the $\Gamma(L^2(\Omega, \R^N))$-limit of a sequence of elements of $\mathcal{R}(\Omega)$ with an oscillation in the coefficients $b_{ij}$ must be given by a function $F(u)=\int\limits_{\Omega} \varphi(Du(x)) \, dx$, where the function $\varphi$ is even with respect to permuting columns. This is a structural restriction for classes which could be approximated by such elements of $\mathcal{R}(\Omega)$.

\begin{satz}
Let $L$ be the $\Gamma(L^2(\Omega, \R^N))$-limit of a sequence of elements of $\mathcal{R}(\Omega)$ with integrands $\varphi^n$ defined by $\varphi^n(s,z):= b_{ij}(ns) z^i_{\alpha} z^j_{\alpha}$ with $[0,1]^N$-periodic, measurable and bounded coefficients $b_{ij}$. Then $L(u)=\int\limits_{\Omega} \varphi(Du(x)) \, dx$, where the function $\varphi$ is even with respect to permuting columns. 
\end{satz}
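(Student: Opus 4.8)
The plan is to recognize $(L^n)$ as a homogenization sequence for which Theorem~\ref{th153} is available, and then to upgrade the obvious column-permutation symmetry of the cell integrand to the homogenized density by exploiting that a Riemannian Lagrangian is additive in the columns of the gradient. Concretely, set $\varepsilon_n:=1/n$ and $f(s,A):=b_{ij}(s)A^i_\alpha A^j_\alpha$, so that $L^n(u)=\int_\Omega f\big(u(x)/\varepsilon_n,Du(x)\big)\,dx$. The Lagrangian $f$ has no genuine first ("$x$") slot, hence is trivially $[0,1]^m$-periodic there; it is $[0,1]^N$-periodic in $s$ by hypothesis; and because the $b_{ij}$ are the coefficients of an element of $\mathcal{R}(\Omega)\subset\mathcal{E}(\Omega)$, there are constants $0<c_1\le c_2$ with $c_1|z|^2\le b_{ij}(s)z^i_\alpha z^j_\alpha\le c_2|z|^2$ for all $s$ and $z$, and substituting $ns$ for $s$ preserves these bounds, so $c_1|A|^2\le f(s,A)\le c_2|A|^2$ for all $(s,A)$. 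Thus Theorem~\ref{th153} with $p=2$ produces a quasi-convex function $\varphi:=f_{hom}:\R^{N\times m}\to\R$, independent of $\Omega$, such that the $\Gamma(L^2(\Omega,\R^N))$-limit of $(L^n)$ equals $u\mapsto\int_\Omega\varphi(Du(x))\,dx$; by uniqueness of $\Gamma$-limits this is $L$, and $\varphi$ satisfies the cell formula
\[
 \varphi(Y)=\lim_{t\to\infty}\inf\left\{\frac{1}{t^m}\int_{(0,t)^m}f\big(v(x)+Yx,\,Dv(x)+Y\big)\,dx \,:\, v\in W^{1,2}_0\big((0,t)^m,\R^N\big)\right\}.
\]

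The structural observation is that $f(s,A)=\sum_\alpha q_s(A_\alpha)$, where $A_\alpha\in\R^N$ is the $\alpha$-th column of $A$ and $q_s(w):=b_{ij}(s)w^iw^j$; consequently, for every permutation matrix $P\in\R^{m\times m}$, right multiplication by $P$ merely reorders columns and $f(s,AP)=f(s,A)$ for all $s,A$. To transfer this invariance through the cell formula, fix $Y$ and such a $P$ and, in the cell problem on $(0,t)^m$ defining $\varphi(YP)$, substitute $x=P^{T}y$; since $P^{T}=P^{-1}$ is again a permutation matrix, this is a measure-preserving bijection of the cube onto itself taking $W^{1,2}_0$ to $W^{1,2}_0$. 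Writing $w(y):=v(P^{T}y)$ one gets $Dw(y)=Dv(x)P^{T}$ by the chain rule, hence $Dv(x)+YP=(Dw(y)+Y)P$ and $v(x)+YPx=w(y)+Yy$, so that, using $f(s,BP)=f(s,B)$, the transformed cell integrand is exactly $f\big(w(y)+Yy,Dw(y)+Y\big)$; the two infima therefore agree for every $t$, and letting $t\to\infty$ gives $\varphi(YP)=\varphi(Y)$. Since $P$ ranges over all permutation matrices, $\varphi$ is invariant under arbitrary permutations of the columns of its argument, which is the asserted evenness.

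The conceptual content is confined to the columnwise additivity of Riemannian integrands; the rest is bookkeeping, and the two points I would be most careful about are that the growth and ellipticity constants entering Theorem~\ref{th153} can genuinely be chosen independent of $n$ (which is why the hypothesis $L^n\in\mathcal{R}(\Omega)$ is used, rather than mere boundedness of $b_{ij}$), and that the change of variables in the cell formula uses $P^{T}$ in the right places, with the accompanying preservation of the homogeneous boundary condition on the cube. Nothing in the argument is special to $N=3$, so it holds in the stated generality.
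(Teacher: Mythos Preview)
Your proof is correct and follows essentially the same route as the paper: both apply Theorem~\ref{th153} to identify the homogenized density via the cell formula, then use the change of variables $x\mapsto P^{T}x$ (the paper writes this as $x\mapsto\tilde x$) together with the column-additivity of the Riemannian integrand to show invariance under column permutations. Your version is marginally cleaner in that you observe the change of variables is a bijection of the competitor class and conclude equality of the infima directly, whereas the paper passes through a minimizing sequence to get one inequality and then invokes symmetry, but this is a packaging difference rather than a different idea.
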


\begin{proof}
By Theorem \ref{th153}, $L(u)=\int\limits_{\Omega} \varphi(Du(x)) \, dx$ for a function $\varphi$, where $\varphi(A)$ is given by
\begin{align*}
 \lim\limits_{t \rightarrow \infty} \inf \Bigg\{\frac{1}{t^m} \int\limits_{(0,t)^m} b_{ij}\left(u(x)+Ax\right) \left(Du(x)+A\right)^i_{\alpha} \left(Du(x)+A\right)^j_{\alpha} \text{ } dx; u \in W^{1,p}_0((0,t)^m, \R^N)\Bigg\}.
\end{align*}
For every $t \in \R$, let $u^t_k$ be a minimizing sequence in $W^{1,p}_0((0,t)^m, \R^N)$ such that
\begin{equation}
 \varphi(A)=\lim\limits_{t \rightarrow \infty} \lim\limits_{k \rightarrow \infty} \frac{1}{t^m} \int\limits_{(0,t)^m} b_{ij}\left(u^t_k(x)+Ax\right) \left( Du^t_k(x)+A\right)^i_{\alpha} \left( Du^t_k(x)+A\right)^j_{\alpha} \text{ } dx. \nonumber
\end{equation}
Let $\Tilde A$ be the matrix $A$ with permuted columns $l_1, l_2$, let $\Tilde I$ be the identity matrix with permuted columns $l_1,l_2$, let $\Tilde x$ be the vector $x$ with permuted elements $l_1,l_2$ and let $\Tilde u^t_k$ be defined by $\Tilde u^t_k (x)= u^t_k (\Tilde x)$. Then obviously, $\Tilde A \Tilde x=Ax$. Furthermore, $D\Tilde u^t_k(x)=D\left[u^t_k(\Tilde x)\right]=Du^t_k(\Tilde x) \Tilde I$. This yields
\begin{align*}
 \varphi(\Tilde A)&\leq\lim\limits_{t \rightarrow \infty} \lim\limits_{k \rightarrow \infty} \frac{1}{t^m} \int\limits_{(0,t)^m} b_{ij}\left(\Tilde u^t_k(x)+\Tilde Ax\right) \left( D\Tilde u^t_k(x)+\Tilde A\right)^i_{\alpha} \left( D\Tilde u^t_k(x)+\Tilde A\right)^j_{\alpha} \text{ } dx \\
&=\lim\limits_{t \rightarrow \infty} \lim\limits_{k \rightarrow \infty} \frac{1}{t^m} \int\limits_{(0,t)^m} b_{ij}\left(u^t_k(\Tilde x)+\Tilde Ax\right) \left( Du^t_k(\Tilde x) \Tilde I+\Tilde A\right)^i_{\alpha} \left( Du^t_k(\Tilde x)\Tilde I+\Tilde A\right)^j_{\alpha} \text{ } dx \\
&=\lim\limits_{t \rightarrow \infty} \lim\limits_{k \rightarrow \infty} \frac{1}{t^m} \int\limits_{(0,t)^m} b_{ij}\left(u^t_k(x)+\Tilde A \Tilde x\right) \left( Du^t_k(x) +A\right)^i_{\alpha} \left( Du^t_k(x) +A\right)^j_{\alpha} \text{ } dx = \varphi(A). 
\end{align*}
By symmetry arguments, we get $\varphi(A)\leq \varphi(\Tilde A)$, so there must be equality and thus, $\varphi$ is even with respect to permuting columns.
\end{proof}

\section{Properties of Approximating Sequences}\label{PropApprox}

In Chapter \ref{Counterexamples}, we will see that not all energy functionals of Finsler metrics, Cartan functionals or perfect dominance functionals can be approximated by sequences of energy functionals of maps mapping from one Riemannian manifold $R_1$ to another Riemannian manifold $R_2$ defined by $L^n(u,B)=\int\limits_{B} a_n^{\alpha \beta}(x) b_{ij}^n (x,u(x)) u_{\alpha}^i (x) u_{\beta}^j(x) \, dx$ satisfying the following conditions:
\begin{align} \exists \, M \in \R: \sup\limits_{n,x,\alpha,\beta} \lvert a_n^{\alpha \beta} (x) \rvert \leq M, \quad \sup\limits_{n,x,s,i,j} \lvert b_{ij}^n (x,s) \rvert \leq M, \label{bounded}
\end{align}
\begin{align} \exists \, c_1 >0: a_n^{\alpha \beta}(x) b_{ij}^n(x,s) A_{\alpha}^i A_{\beta}^j \geq c_1 \lvert A \rvert^2 \quad \text{for all } (x,s,A) \in \R^m \times \R^N \times \R^{N \times m}, \label{growth} 
\end{align}
\begin{align} \exists \, x_0: \forall \, \varepsilon>0 \quad \exists \, \delta>0: \quad \lvert a_n^{\alpha \beta} (x) - a_n^{\alpha \beta} (x_0) \rvert < \varepsilon \quad \forall n \in \N, \alpha, \beta \in \left\{ 1,..,m \right\} \nonumber\\
\text{and for almost every } x \in B(x_0, \delta), \label{UC} 
       \end{align}
there is a bounded continuous function $\omega: \R^+ \rightarrow \R^+$ such that $\omega(0)=0$ and \begin{align}
\forall &x \in \R^m, s_1, s_2 \in \R^N, A \in \R^{N \times m}: \nonumber\\
  &\lvert a_n^{\alpha \beta}(x) b_{ij}^n(x,s_1) A^i_{\alpha} A^j_{\beta}- a_n^{\alpha \beta}(x) b_{ij}^n(x,s_2) A^i_{\alpha} A^j_{\beta}\rvert \leq \omega(\lvert s_1 - s_2 \rvert) (1 + \lvert A \rvert^2), \label{fusco}                                                                                                           
                                                                                                          \end{align}
for every $\varepsilon>0$,  every Borel set $E \subset \Omega$  and every $u \in W^{1,2}(\Omega, \R^N)$ there exists an open set $U \supset E$ and a sequence $u^n$ in $W^{1,2}(\Omega, \R^N)$ converging to $u$ in $L^2(\Omega, \R^N)$ satisfying 
\begin{align*}
\limsup\limits_{n \rightarrow \infty} L^n(u^n,E)=\min \Bigg\{ \limsup\limits_{n \rightarrow \infty} L^n(v^n,E); v^n \text{ in } W^{1,2}(\Omega, \R^N) \text { and } v^n \rightarrow u \text{ in } L^2(\Omega, \R^N) \Bigg\}
\end{align*}
 so that
\begin{align}
 L^n(u^n,E) \geq L^n(u^n,U)-\varepsilon \quad \forall \, n \in \N. \label{explrecseq}
\end{align}
In this section, we will see some properties of $L^n$ which will be crucial for the proofs of the theorems in Chapter \ref{Counterexamples}.

\begin{bemerkung}\label{bemexpl}
 By \cite[Proposition 7.6]{braidef} there exists a sequence $u^n$ in $W^{1,2}(\Omega, \R^N)$ converging to $u$ in $L^2(\Omega, \R^N)$ and satisfying 
\begin{align*}
\limsup\limits_{n \rightarrow \infty} L^n(u^n,E)=\min \left\{ \limsup\limits_{n \rightarrow \infty} L^n(v^n,E); v^n \text{ in } W^{1,2}(\Omega, \R^N) \text { and } v^n \rightarrow u \text{ in } L^2(\Omega, \R^N) \right\}, 
\end{align*}
which is needed in \eqref{explrecseq}.
\end{bemerkung}

\begin{bemerkung}\label{bembounded}
 From \eqref{bounded} we can deduce $L^n(u,B) \leq M^2 m^2 N^2 \lVert Du \rVert^2_{L^2(B)}$ for every $B \subset \Omega$ since
\begin{align*}
 L^n(u,B) &= \int\limits_{B} a_n^{\alpha \beta}(x) b_{ij}^n (x,u(x)) u_{\alpha}^i (x) u_{\beta}^j(x) \, dx \leq \int\limits_{B} M^2  \sum\limits_{\alpha, \beta=1}^m \sum\limits_{i,j=1}^N \lvert u_{\alpha}^i (x) \rvert \cdot \lvert u_{\beta}^j(x)  \rvert \, dx\\
&\leq M^2 \int\limits_{B} m^2 N^2 \sup\limits_{\substack{i \in \{1,...,N\} \\ \alpha \in \{1,...,m\}}} \lvert u_{\alpha}^i (x) \rvert^2 \, dx \leq M^2 m^2 N^2 \lVert Du \rVert^2_{L^2(B)} \, dx.
\end{align*}

\end{bemerkung}

\begin{bemerkung}
 The conditions \eqref{bounded} and \eqref{growth} are in some way coherent because the limit functional satisfies those conditions. The conditions \eqref{UC}, \eqref{fusco} and \eqref{explrecseq} are needed for technical reasons.
\end{bemerkung}

\begin{bemerkung}
 There are sequences of functionals satisfying the condition \eqref{explrecseq} as we will see in Corollary \ref{exfus}.
\end{bemerkung}

\begin{lemma}\label{constrecseq}
 Let $L^n$ be a sequence of functionals defined by $L^n(u):=\int\limits_{\Omega} a_n^{\alpha \beta} b^n_{i j} u^i_{\alpha}(x) u^j_{\beta}(x) \, dx$ for some coefficients $a_n^{\alpha \beta}$ and $b^n_{ij}$ satisfying the conditions \eqref{bounded} and \eqref{growth}. Then if $L^n$ $\Gamma(L^2(\Omega, \R^N))$-converges to some functional $L$ which satisfies the growth condition $L(u) \leq c_2 \lVert Du \rVert_{L^2(\Omega)}^2$ for some $c_2>0$, the constant sequence $u$ is a recovery sequence for every $u \in W^{1,2}(\Omega, \R^N)$.
\end{lemma}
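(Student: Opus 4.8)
The plan is to verify the one substantive requirement for the constant sequence $(u)_{n}$ to be a recovery sequence for $u$, namely $\limsup_{n\to\infty}L^{n}(u)\le L(u)$; convergence of the constant sequence to $u$ in $L^{2}(\Omega,\R^{N})$ is automatic, and $L(u)\le c_{2}\lVert Du\rVert_{L^{2}(\Omega)}^{2}<\infty$ by hypothesis (alternatively by Remark~\ref{bembounded} together with the liminf--inequality applied to the constant sequence). The idea is to play the constant sequence off against a genuine recovery sequence, exploiting that each $L^{n}$ is the integral of a non-negative quadratic form in $Du$ whose coefficients do not depend on $x$.

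First I would fix a recovery sequence $(u^{n})$ for $u$, so $u^{n}\to u$ in $L^{2}(\Omega,\R^{N})$ and $L^{n}(u^{n})\to L(u)$. By the coercivity~\eqref{growth} we have $c_{1}\lVert Du^{n}\rVert_{L^{2}(\Omega)}^{2}\le L^{n}(u^{n})$, so $(Du^{n})$ is bounded in $L^{2}(\Omega,\R^{N\times m})$; arguing exactly as in the proof of Lemma~\ref{prop154}, $u^{n}\rightharpoonup u$ weakly in $W^{1,2}(\Omega,\R^{N})$, and hence $Du^{n}-Du\rightharpoonup 0$ in $L^{2}(\Omega,\R^{N\times m})$.

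Next, writing $q_{n}(A):=a_{n}^{\alpha\beta}b_{ij}^{n}A_{\alpha}^{i}A_{\beta}^{j}$ and letting $B_{n}$ denote the associated symmetric bilinear form, the elementary identity $q_{n}(P+R)=q_{n}(P)+2B_{n}(P,R)+q_{n}(R)$ with $P=Du$, $R=Du^{n}-Du$ gives, after integration,
\[
 L^{n}(u)=L^{n}(u^{n})-2\int_{\Omega}B_{n}\big(Du,\,Du^{n}-Du\big)\,dx-\int_{\Omega}q_{n}\big(Du^{n}-Du\big)\,dx .
\]
Since $q_{n}\ge 0$ by~\eqref{growth}, the last integral is non-negative and may be dropped, so $L^{n}(u)\le L^{n}(u^{n})-2\int_{\Omega}B_{n}(Du,Du^{n}-Du)\,dx$; it therefore suffices to show that $\int_{\Omega}B_{n}(Du,Du^{n}-Du)\,dx\to 0$, for then passing to the $\limsup$ yields $\limsup_{n}L^{n}(u)\le\lim_{n}L^{n}(u^{n})=L(u)$, as required.

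This cross term is where the only genuine difficulty lies: because $B_{n}$ itself varies with $n$, one cannot simply test the weakly convergent sequence $Du^{n}-Du$ against a fixed functional. I would argue by subsequences: along any subsequence the coefficients $a_{n}^{\alpha\beta},b_{ij}^{n}$ are bounded by~\eqref{bounded}, so a further subsequence has $B_{n_{k}}\to B$ for a fixed symmetric bilinear form $B$, and then
\[
 \int_{\Omega}B_{n_{k}}\big(Du,Du^{n_{k}}-Du\big)\,dx=\int_{\Omega}B\big(Du,Du^{n_{k}}-Du\big)\,dx+\int_{\Omega}(B_{n_{k}}-B)\big(Du,Du^{n_{k}}-Du\big)\,dx ,
\]
where the first integral tends to $0$ because $x\mapsto B(Du(x),\,\cdot\,)$ represents a bounded linear functional on $L^{2}(\Omega,\R^{N\times m})$ and $Du^{n_{k}}-Du\rightharpoonup 0$ there, while the second is bounded by $\lVert B_{n_{k}}-B\rVert\,\lVert Du\rVert_{L^{2}(\Omega)}\,\lVert Du^{n_{k}}-Du\rVert_{L^{2}(\Omega)}\to 0$. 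Hence every subsequence has a further subsequence along which the cross term vanishes, so the whole sequence does, and the proof is complete. The points that will need the most care are exactly this subsequence handling of the $n$-dependent bilinear form and the weak $W^{1,2}$-convergence of the recovery sequence; it is essential here that the coefficients be independent of $x$, for otherwise $B_{n}(x)\cdot(Du^{n}-Du)(x)$ is a weak--weak product, the cross term need not vanish, and the statement fails — precisely as it does in genuine homogenization.
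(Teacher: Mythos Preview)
Your proof is correct and follows essentially the same route as the paper: expand the quadratic form at $Du^{n}=Du+(Du^{n}-Du)$, drop the non-negative term $L^{n}(u^{n}-u)$, and kill the cross term by passing to a subsequence along which the bounded constant coefficients $a_{n}^{\alpha\beta}b_{ij}^{n}$ converge, then using $Du^{n}-Du\rightharpoonup 0$. The only cosmetic difference is that the paper writes the two non-symmetrized cross terms separately rather than packaging them as $2B_{n}(Du,Du^{n}-Du)$ for the symmetrized bilinear form; the subsequence argument for their vanishing is identical to yours.
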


\begin{proof}
 Let $u \in W^{1,2}(\Omega, \R^N)$ and let $u^n$ be the recovery sequence for $L^n$. Then we have
\begin{align*}
 \infty> c_2 \lVert Du \rVert_{L^2(\Omega)} \geq L(u) \geq \limsup\limits_{n \rightarrow \infty} L^n(u^n) \geq \limsup\limits_{n \rightarrow \infty} c_1 \lVert Du^n \rVert_{L^2(\Omega)}^2,
\end{align*}
so $Du^n$ is bounded in $L^2(\Omega, \R^{N \times m})$. Due to the weak compactness of reflexive Banach spaces, there exists a subsequence $Du^{n_k}$ weakly converging in $L^2(\Omega, \R^{N \times m})$ to some $h \in L^2(\Omega, \R^{N \times m})$ and as in the proof of Lemma \ref{prop154}, $h$ is the weak derivative of $u$. Since every weakly convergent subsequence of $Du^n$ converges weakly to $Du$ and since every subsequence has a weakly convergent subsequence, the whole sequence converges weakly to $Du$. $L^n(u^n)$ equals
\begin{align*}
 & L^n(u) + \int\limits_{\Omega} a_n^{\alpha \beta} b_{ij}^n (u)^i_{\alpha}(x) (u^n-u)^j_{\beta}(x) \, dx +\int\limits_{\Omega} a_n^{\alpha \beta} b_{ij}^n (u^n-u)^i_{\alpha}(x) (u)^j_{\beta}(x) \, dx + L^n(u^n-u) \\
\geq& L^n(u) + \int\limits_{\Omega} a_n^{\alpha \beta} b_{ij}^n (u)^i_{\alpha}(x) (u^n-u)^j_{\beta}(x) \, dx +\int\limits_{\Omega} a_n^{\alpha \beta} b_{ij}^n (u^n-u)^i_{\alpha}(x) (u)^j_{\beta}(x) \, dx 
\end{align*}
and thus, we can deduce that $L(u)$ is greater than or equal to
\begin{align*}
\limsup\limits_{n \rightarrow \infty} L^n(u) + \liminf\limits_{n \rightarrow \infty} \int\limits_{\Omega} a_n^{\alpha \beta} b_{ij}^n (u)^i_{\alpha}(x) (u^n-u)^j_{\beta}(x) \, dx  +\liminf\limits_{n \rightarrow \infty} \int\limits_{\Omega} a_n^{\alpha \beta} b_{ij}^n (u^n-u)^i_{\alpha}(x) (u)^j_{\beta}(x) \, dx,
\end{align*}
which equals $\limsup\limits_{n \rightarrow \infty} L^n(u)$, since $\lvert \liminf\limits_{n \rightarrow \infty} \int\limits_{\Omega} a_n^{\alpha \beta} b_{ij}^n (u)^i_{\alpha}(x) (u^n-u)^j_{\beta}(x) \, dx \rvert=0$ and $\lvert \liminf\limits_{n \rightarrow \infty} \int\limits_{\Omega} a_n^{\alpha \beta} b_{ij}^n (u^n-u)^i_{\alpha}(x) (u)^j_{\beta}(x) \, dx \rvert =0$. To see that, let $u^{n_k}$ be a subsequence of $u^n$ so that
\begin{align*}
 \liminf\limits_{n \rightarrow \infty} \int\limits_{\Omega} a_n^{\alpha \beta} b_{ij}^n (u)^i_{\alpha}(x) (u^n-u)^j_{\beta}(x) \, dx = \lim\limits_{k \rightarrow \infty} \int\limits_{\Omega} a_{n_k}^{\alpha \beta} b_{ij}^{n_k} (u)^i_{\alpha}(x) (u^{n_k}-u)^j_{\beta}(x) \, dx.
\end{align*}
Then the sequence $a_{n_k}^{\alpha \beta} b_{ij}^{n_k}$ is bounded by $M^2$ so there exists another subsequence $a_{n_{k_l}}^{\alpha \beta} b_{ij}^{n_{k_l}}$ converging to some $c_{ij}^{\alpha \beta}$ for all $i,j \in \{1,..,N\}$ and $\alpha, \beta \in \{1,..,m\}$. This implies
\begin{align*}
 &\lvert \liminf\limits_{n \rightarrow \infty} \int\limits_{\Omega} a_n^{\alpha \beta} b_{ij}^n (u)^i_{\alpha}(x) (u^n-u)^j_{\beta}(x) \, dx \rvert \\
 \leq& \lim\limits_{l \rightarrow \infty} \lvert a_{n_{k_l}}^{\alpha \beta} b_{ij}^{n_{k_l}} - c_{ij}^{\alpha \beta} \rvert \int\limits_{\Omega} \lvert (u)^i_{\alpha}(x) (u^{n_{k_l}}-u)^j_{\beta}(x) \rvert \, dx  + \lvert \int\limits_{\Omega} c_{ij}^{\alpha \beta} (u)^i_{\alpha}(x) (u^{n_{k_l}}-u)^j_{\beta}(x) \, dx \rvert = \, 0.
\end{align*}
The equality $\lvert \liminf\limits_{n \rightarrow \infty} \int\limits_{\Omega} a_n^{\alpha \beta} b_{ij}^n (u^n-u)^i_{\alpha}(x) (u)^j_{\beta}(x) \, dx \rvert =0$ can be proven in the same way, so the constant sequence $u^n=u$ is a recovery sequence for $u$.
\end{proof}

\begin{korollar}\label{exfus}
 Let $L^n$ be a sequence of functionals defined by $L^n(u):=\int\limits_{\Omega} a_n^{\alpha \beta} b^n_{i j} u^i_{\alpha}(x) u^j_{\beta}(x) \, dx$ for some coefficients $a_n^{\alpha \beta}$ and $b^n_{ij}$ satisfying the conditions \eqref{bounded} and \eqref{growth}. Then $L^n$ satisfies the condition \eqref{explrecseq} as well.
\end{korollar}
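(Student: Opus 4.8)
The plan is to show that the \emph{constant} sequence $u^n\equiv u$ already satisfies everything required in \eqref{explrecseq}, provided $U$ is chosen well. The point is that here the coefficients $a_n^{\alpha\beta}$ and $b^n_{ij}$ depend neither on $x$ nor on $u(x)$: writing $Q_n(\xi):=a_n^{\alpha\beta}b^n_{ij}\,\xi^i_\alpha\xi^j_\beta$ for $\xi\in\R^{N\times m}$, one has $L^n(w,B)=\int_B Q_n(Dw)\,dx$, where by \eqref{growth} and \eqref{bounded} each $Q_n$ is a quadratic form with $c_1|\xi|^2\le Q_n(\xi)\le M^2m^2N^2|\xi|^2$ — in particular convex — so the functionals carry no oscillation in the independent variables that a recovery sequence could exploit to beat the constant one.

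The first step is to verify that $u^n\equiv u$ realizes the minimum in \eqref{explrecseq}, i.e.\ that $\limsup_n L^n(u,E)\le\limsup_n L^n(v^n,E)$ for every sequence $v^n\to u$ in $L^2(\Omega,\R^N)$ (the opposite inequality being immediate, as the constant sequence is admissible). I would run the argument from the proof of Lemma~\ref{constrecseq}, localized to $E$: set $\ell:=\limsup_n L^n(u,E)$, which is finite by Remark~\ref{bembounded}, choose a subsequence along which $L^n(u,E)\to\ell$ and, refining once more, along which the bounded coefficients $a_n^{\alpha\beta}b^n_{ij}$ converge to some $c^{\alpha\beta}_{ij}$ with associated quadratic form $Q_\infty\ge c_1|\cdot|^2$; by dominated convergence $\ell=\int_E Q_\infty(Du)\,dx$. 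Given $v^n\to u$, if $\limsup_n L^n(v^n,E)=\infty$ there is nothing to show, so assume it is finite; then \eqref{growth} makes $Dv^n$ bounded in $L^2(E,\R^{N\times m})$ and, after a further refinement of the subsequence, $Dv^n\rightharpoonup Du$ weakly in $L^2(E,\R^{N\times m})$ (the weak limit being identified with $Du$ as in the proof of Lemma~\ref{prop154}). Splitting $\int_E Q_n(Dv^n)=\int_E Q_\infty(Dv^n)+\int_E(Q_n-Q_\infty)(Dv^n)$, the last integral tends to $0$ because $\|Q_n-Q_\infty\|\to0$ while $\|Dv^n\|_{L^2(E)}$ stays bounded, and $\liminf_n\int_E Q_\infty(Dv^n)\ge\int_E Q_\infty(Du)=\ell$ by weak lower semicontinuity of the nonnegative convex integrand $Q_\infty$; hence $\limsup_n L^n(v^n,E)\ge\ell$, as claimed.

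Next I would choose $U$. By Remark~\ref{bembounded} one has $L^n(u,B)\le M^2m^2N^2\int_B|Du|^2\,dx$ for every Borel set $B\subset\Omega$, \emph{uniformly in $n$}. Since $|Du|^2\in L^1(\Omega)$, the set function $B\mapsto\int_B|Du|^2\,dx$ is absolutely continuous, so there is $\delta>0$ with $M^2m^2N^2\int_B|Du|^2\,dx<\varepsilon$ whenever $|B|<\delta$; by outer regularity of Lebesgue measure pick an open $U$ with $E\subset U\subset\Omega$ and $|U\setminus E|<\delta$. Then $L^n(u,U)=L^n(u,E)+L^n(u,U\setminus E)\le L^n(u,E)+\varepsilon$ for every $n$, which is exactly \eqref{explrecseq} for the constant sequence $u^n\equiv u$, together with the minimality established above and the trivial convergence $u^n\to u$ in $L^2(\Omega,\R^N)$.

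The main obstacle is the minimality in the first step, and inside it the extraction of a subsequence with $Dv^n\rightharpoonup Du$ in $L^2(E)$: since $E$ is only a Borel set, some care is needed to identify the weak $L^2$-limit of $Dv^n$ on $E$ with $Du$, but this is again the device used in Lemma~\ref{prop154} (test against $C^1_0$ functions and use $v^n\to u$ in $L^2$), and it goes through once $Dv^n$ is under control on a neighbourhood of $E$. Everything else — the upper bound, the absolute continuity of the integral, the weak lower semicontinuity of $\int_E Q_\infty(D\cdot)$ — is routine; it is precisely the fact that no hypotheses beyond \eqref{bounded} and \eqref{growth} are needed that makes this corollary a short consequence of Lemma~\ref{constrecseq} and Remark~\ref{bembounded}.
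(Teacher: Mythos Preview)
Your strategy is precisely the paper's: show that the constant sequence $u^n\equiv u$ attains the minimum in \eqref{explrecseq}, then produce the open set $U\supset E$ by the absolute continuity of $B\mapsto\int_B|Du|^2\,dx$ combined with outer regularity of Lebesgue measure. The second step is argued exactly as in the paper (via Remark~\ref{bembounded}) and is correct.

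For the first step the paper simply writes ``By Lemma~\ref{constrecseq}\ldots''; you attempt to supply the details, and you correctly isolate the delicate point. The problem is your proposed resolution. The integration-by-parts device of Lemma~\ref{prop154} identifies the weak limit of $Dv^n$ with $Du$ only when $Dv^n$ is bounded in $L^2$ of an \emph{open} set, so that one may test against $C^1_0$ functions supported there. Here you control only $\|Dv^n\|_{L^2(E)}$, with $E$ an arbitrary Borel set, and nothing in the hypotheses prevents $\|Dv^n\|_{L^2(\Omega\setminus E)}\to\infty$; hence you have no gradient bound on any neighbourhood of $E$, and the sentence ``it goes through once $Dv^n$ is under control on a neighbourhood of $E$'' assumes exactly the unavailable ingredient. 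Consequently the claimed weak convergence $Dv^n\rightharpoonup Du$ in $L^2(E,\R^{N\times m})$ is not justified, and with it the inequality $\liminf_n\int_E Q_\infty(Dv^n)\ge\int_E Q_\infty(Du)$.

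One clean way to close the gap is to bypass the weak-limit identification entirely and appeal to the sequential lower semicontinuity of
\[
v\longmapsto\int_\Omega \chi_E(x)\,Q_\infty(Dv(x))\,dx
\]
on $W^{1,1}_{\mathrm{loc}}(\Omega,\R^N)$ with respect to $L^1_{\mathrm{loc}}$ convergence: this holds for any nonnegative integrand that is measurable in $x$ and convex in the gradient variable, and it requires no coercivity and hence no a~priori bound on $Dv^n$ off $E$. The paper's own one-line citation of Lemma~\ref{constrecseq} --- a statement about $\Omega$ under an additional $\Gamma$-convergence hypothesis, whose proof uses gradient bounds on all of $\Omega$ --- glosses over the same issue.
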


\begin{proof}
 Let $E \subset \Omega$ be a Borel set. By Lemma \ref{constrecseq} for every $u \in W^{1,2}(\Omega, \R^N)$ the constant sequence $u$ is a recovery sequence, i.e. 
\begin{align*}
\limsup\limits_{n \rightarrow \infty} L^n(u,E)=\min \left\{ \limsup\limits_{n \rightarrow \infty} L^n(v^n,E); v^n \text{ in } W^{1,2}(\Omega, \R^N) \text{ and } v^n \rightarrow u \text{ in } L^2(\Omega, \R^N) \right\}.
\end{align*}
Then by Remark \ref{bembounded}, for every $U \in \mathcal{A}(\Omega)$ with $E \subset U$ we achieve that $L^n(u,U) \leq L^n(u, E) + \int\limits_{U \setminus E} M^2 N^2 m^2 \lvert Du(x) \rvert^2 \, dx$. By the outer regularity of the Lebesgue measure, we have $\mathcal{L}(E)= \inf \left\{ \mathcal{L}(U); U \in \mathcal{A}(\Omega), E \subset U \right\}$. Let $U_l$ be a sequence in $\mathcal{A}(\Omega)$ such that $E \subset U_l$ for every $l \in \N$ and $\lim\limits_{l \rightarrow \infty} \mathcal{L}(U_l)=\mathcal{L}(E)$. Then we have
\begin{align*}
 \lim\limits_{l \rightarrow \infty} \int\limits_{U_l \setminus E} M^2 N^2 m^2 \lvert Du(x) \rvert^2 \, dx =\lim\limits_{l \rightarrow \infty} \int\limits_{\Omega} M^2 N^2 m^2 \lvert Du(x) \rvert^2 \cdot \mathbbm{1}_{U_l \setminus E}(x) \, dx =0
\end{align*}
by Lebesgue's Dominated Convergence Theorem (\cite[Theorem 1.8]{lieblos}). Let $\varepsilon>0$ and choose $L \in \N$ so that $\int\limits_{U_L \setminus E} M^2 N^2 m^2 \lvert Du(x) \rvert^2 \, dx < \varepsilon$. Then for this set $U_L$ we achieve $L^n(u,U_L) \leq L^n(u, E) + \varepsilon$ so the condition \eqref{explrecseq} is satisfied.
\end{proof}

\begin{lemma}\label{local}
 Let $L^n(u,\Omega)$ be a sequence of energy functionals $\Gamma(L^2(\Omega, \R^N))$-converging to the energy functional $L(u,\Omega)=\int\limits_{\Omega} \varphi(u(x), Du(x)) \, dx$ for a non-negative function $\varphi$ satisfying the growth condition $\varphi(s,A) \leq c_2 \lvert A \rvert^2$ for some $c_2>0$ and for all $s \in \R^N, A \in \R^{N \times m}$, and let $L^n$ satisfy the conditions \eqref{bounded}, \eqref{growth} and \eqref{fusco}. Then $L^n(u,U)$ $\Gamma(L^2(\Omega, \R^N))$-converges to $L(u,U)$ for every $U \in \mathcal{A}(\Omega)$ and $u \in W^{1,2}(\Omega, \R^N)$.
\end{lemma}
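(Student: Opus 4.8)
The plan is to combine the compactness of $\Gamma$-convergence with an integral representation theorem: one first identifies a subsequential $\Gamma$-limit on all open subsets, then recognizes it as $L(\cdot,U)$, and finally passes back to the full sequence by the Urysohn property of $\Gamma$-convergence. So I would fix an arbitrary subsequence of $(L^n)$. By \eqref{bounded} and \eqref{growth} the functionals $L^n$ satisfy the fundamental estimate and their recovery sequences are bounded in $W^{1,2}(\Omega,\R^N)$, so by \cite[Proposition 12.3]{braidef} there exist a further subsequence $(L^{n_k})$ and a functional $F\colon W^{1,2}(\Omega,\R^N)\times\mathcal A(\Omega)\to[0,\infty]$ with $F(u,U)=\Gamma(L^2(\Omega,\R^N))\text{-}\lim_k L^{n_k}(u,U)$ for every $u$ and every $U\in\mathcal A(\Omega)$, such that $F$ is local and $F(u,\cdot)$ is the restriction of a Borel measure to $\mathcal A(\Omega)$; moreover $c_1\int_U|Du|^2\,dx\le F(u,U)\le M^2m^2N^2\int_U|Du|^2\,dx$ by \eqref{growth} and Remark \ref{bembounded}. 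Since the full sequence already $\Gamma$-converges on $\Omega$ to $L(\cdot,\Omega)$, so does the subsequence, whence $F(u,\Omega)=\int_\Omega\varphi(u(x),Du(x))\,dx$ for every $u$.

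Next I would represent $F$ as an integral over open sets. Condition \eqref{fusco} provides the uniform continuity of the $L^n$ in the state variable which survives in the limit and supplies the continuity hypothesis of the integral representation theorem: concretely, freezing the state slot in $b^n_{ij}(\cdot,\cdot)$ shows that $L^n(u,U)$ differs from $\int_U a_n^{\alpha\beta}(x)b^n_{ij}(x,w(x))\,u^i_\alpha u^j_\beta\,dx$ by at most $\int_U\omega(|u-w|)(1+|Du|^2)\,dx$, a bound that passes to $F$. Together with the locality, the measure property, the lower semicontinuity of $F(\cdot,U)$ and the bounds above, \cite[Theorem 9.1]{braidef} yields a Carath\'eodory function $g\colon\Omega\times\R^N\times\R^{N\times m}\to[0,\infty)$, quasi-convex in its last argument, with $0\le g(x,s,A)\le M^2m^2N^2|A|^2$ and $F(u,U)=\int_U g(x,u(x),Du(x))\,dx$ for all $u$ and all $U\in\mathcal A(\Omega)$. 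Taking $U=\Omega$ and using the previous paragraph gives $\int_\Omega g(x,u,Du)\,dx=\int_\Omega\varphi(u,Du)\,dx$ for every $u\in W^{1,2}(\Omega,\R^N)$.

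It remains to identify $g$ with $\varphi$, which is the crux. I would argue by blow-up: fix $s_0\in\R^N$, $A_0\in\R^{N\times m}$ and $x_0\in\Omega$, pick a cut-off $\eta$ with $\eta\equiv1$ on $[0,1]$ and $\eta\equiv0$ on $[2,\infty)$, and test the last identity with $u_r(x):=s_0+\eta(|x-x_0|/r)\,A_0(x-x_0)$ for $r$ so small that $B_{2r}(x_0)\subset\Omega$. Since $g(x,s,0)=0=\varphi(s,0)$ by the quadratic upper bounds and non-negativity, the contributions over $\Omega\setminus B_{2r}(x_0)$ cancel; over the annulus $B_{2r}(x_0)\setminus B_r(x_0)$ they are $O\!\big(r^m(1+|A_0|^2)\big)$ because $|Du_r|\le C|A_0|$ there; and over $B_r(x_0)$ the state slot can be frozen to $s_0$ up to an error $O\!\big(\omega(r|A_0|)(1+|A_0|^2)\big)$. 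Dividing by $|B_r(x_0)|$ and letting $r\to0$ at a Lebesgue point $x_0$ of $x\mapsto g(x,s_0,A_0)$, and using the continuity in the state variable of $g$ and of $\varphi$ (the latter inheriting the modulus $\omega$ from \eqref{fusco} via the $\Gamma$-limit), one obtains $g(x_0,s_0,A_0)=\varphi(s_0,A_0)$; letting $(s_0,A_0)$ run over a countable dense set and appealing once more to these continuity properties yields $g=\varphi$ almost everywhere. Hence $F(u,U)=\int_U\varphi(u,Du)\,dx=L(u,U)$ for all $u$ and all $U\in\mathcal A(\Omega)$. As this limit is independent of the chosen subsequence, the Urysohn property of $\Gamma$-convergence (see \cite{braidef}) implies that the entire sequence $L^n(\cdot,U)$ $\Gamma(L^2(\Omega,\R^N))$-converges to $L(\cdot,U)$ for every $U\in\mathcal A(\Omega)$.

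The main obstacle I anticipate is the representation step: verifying that \eqref{bounded}, \eqref{growth} and \eqref{fusco} really supply every hypothesis of \cite[Theorem 9.1]{braidef} in the present $(x,s,A)$-dependent setting — in particular that the fundamental estimate holds for the $L^n$ and that \eqref{fusco} transfers to $F$ as the continuity-in-$u$ estimate used above — and, to a lesser extent, making the blow-up identification fully rigorous given that $\varphi$ is a priori only assumed measurable and of quadratic growth rather than continuous.
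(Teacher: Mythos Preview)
Your overall strategy---extract a $\Gamma$-convergent subsequence on all open sets, obtain an integral representation with density $g(x,s,A)$, identify $g$ with $\varphi$, then apply Urysohn---is exactly the paper's. The difference lies in how the two obstacles you yourself flag are handled. For the representation step, the paper does not go through \cite[Theorem~9.1]{braidef}; instead it invokes \cite[Theorem~2.4]{fusco} directly, which is tailored to condition~\eqref{fusco} and delivers both compactness and the integral form $F(u,U)=\int_U g(x,u,Du)\,dx$ in one stroke. This removes your uncertainty about whether the hypotheses of the Braides--Defranceschi representation theorem are met in the $(x,s,A)$-dependent case. For the identification $g=\varphi$, the paper avoids your blow-up with affine-plus-cutoff test functions and the attendant continuity issues for $\varphi$: it simply tests with $u\in W^{1,2}_0(B(x_0,\varepsilon))$ and observes that, since $\varphi(s,0)=0$ (non-negativity plus the upper growth bound) and $F(0,\cdot)=0$ (Remark~\ref{bembounded}), both $L(u,\Omega)$ and $F(u,\Omega)$ reduce to integrals over $B(x_0,\varepsilon)$, yielding $\int_{B(x_0,\varepsilon)}\varphi(u,Du)\,dx=\int_{B(x_0,\varepsilon)}g(\cdot,u,Du)\,dx$ for all such balls and all compactly supported $u$, from which the a.e.\ equality follows by shrinking $\varepsilon$. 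Your route is not wrong, but the paper's is shorter and sidesteps precisely the two points you were worried about.
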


\begin{proof}
 By \cite[Theorem 2.4]{fusco} every subsequence $L^{n_k}$ of $L^n(u,U)$ has another subsequence $L^{n_{k_q}}$ which $\Gamma(L^2(\Omega, \R^N))$-converges to a function $F(u,U)=\int\limits_U g(x,u(x),Du(x)) \, dx$ for every $U \in \mathcal{A}(\Omega)$.
Now choose arbitrary $x \in \Omega$ and $\varepsilon>0$ such that $B(x, \varepsilon) \subset \Omega$ and let $u \in W^{1,2}_0(B(x, \varepsilon))$. Then we have $L(u,\Omega)=\int\limits_{\Omega} \varphi(u(x), Du(x)) \, dx =\int\limits_{B(x, \varepsilon)} \varphi(u(x), Du(x)) \, dx$. With
\begin{align*}
 \Gamma(L^2(\Omega, \R^N))-\lim\limits_{q \rightarrow \infty} L^{n_{k_q}}(u, \Omega) \leq \liminf\limits_{q \rightarrow \infty} L^{n_{k_q}}(u^{n_{k_q}}, \Omega) = \liminf\limits_{q \rightarrow \infty} L^{n_{k_q}}(u^{n_{k_q}}, \Omega \setminus \partial B(x, \varepsilon))
\end{align*}
for an arbitrary $u^n \rightarrow u$ in $L^2(\Omega, \R^N)$ and
\begin{align*}
 \Gamma(L^2(\Omega, \R^N))-\lim\limits_{q \rightarrow \infty} L^{n_{k_q}}(u, \Omega) \geq \limsup\limits_{q \rightarrow \infty} L^{n_{k_q}}(v^{n_{k_q}}, \Omega) = \limsup\limits_{q \rightarrow \infty} L^{n_{k_q}}(v^{n_{k_q}}, \Omega \setminus \partial B(x, \varepsilon))
\end{align*}
for the recovery sequence $v^n$ and the uniqueness of the $\Gamma$-limit, we get
\begin{align*}
 \Gamma(L^2(\Omega, \R^N))-\lim\limits_{q \rightarrow \infty} L^{n_{k_q}}(u, \Omega \setminus \partial B(x, \varepsilon))=\Gamma(L^2(\Omega, \R^N))-\lim\limits_{q \rightarrow \infty} L^{n_{k_q}}(u, \Omega).
\end{align*}
On the other hand, we then get
\begin{align*}
 L(u,\Omega)&=\Gamma(L^2(\Omega, \R^N))-\lim\limits_{q \rightarrow \infty} L^{n_{k_q}}(u, \Omega \setminus \partial B(x, \varepsilon)) \\
&= F(0,\Omega \setminus \overline{B(x, \varepsilon)}) + \int\limits_{B(x, \varepsilon)} g(y,u(y),Du(y)) \, dy = \int\limits_{B(x, \varepsilon)} g(y,u(y),Du(y)) \, dy
\end{align*}
since with Remark \ref{bembounded}
\begin{align*}
 0 \leq F(0,\Omega \setminus \overline{B(x, \varepsilon)}) & \leq \liminf\limits_{q \rightarrow \infty} L^{n_{k_q}}(0, \Omega \setminus \overline{B(x, \varepsilon)})  \leq M^2 m^2 N^2 \lVert 0 \rVert^2_{L^2(\Omega \setminus \overline{B(x, \varepsilon)}, \R^N)}=0.
\end{align*}
Altogether, this yields $\int\limits_{B(x, \varepsilon)} \varphi(u(y), Du(y)) \, dy= \int\limits_{B(x, \varepsilon)} g(y,u(y),Du(y)) \, dy \nonumber$ and, by letting $\varepsilon \rightarrow 0$, we get $\varphi(u(x), Du(x))=g(x,u(x),Du(x))$ almost everywhere in $\Omega$. Thus, for every $u \in W^{1,2}(\Omega, \R^N)$ and $U \in \mathcal{A}(\Omega)$, every subsequence of $L^n(u,U)$ has another subsequence $\Gamma(L^2(\Omega, \R^N))$-converging to $L(u,U)$ and by Urysohn's property of $\Gamma$-convergence (\cite[Theorem 1.44]{Braides}), the whole sequence also $\Gamma(L^2(\Omega, \R^N))$-converges to $L(u, U)$.
\end{proof}
The main reason why we cannot approximate all of the desired metrics with such sequences is that metrics independent of $x$ and $u(x)$ can be approximated by Riemannian metrics independent of $x$ and $u(x)$ if they can be approximated at all, as we can see in the following lemma and Lemma \ref{indepb}.

\begin{lemma}\label{indepa}
 Let $L^n : W^{1,2}(\Omega, \R^N) \times \mathcal{A}(\Omega) \rightarrow [0,\infty)$ be a sequence of functionals defined by $L^n(u,B):=\int\limits_{B} a_n^{\alpha \beta}(x) b_{ij}^n (u(x)) u_{\alpha}^i (x) u_{\beta}^j(x) \text{ } dx \nonumber$ satisfying the conditions \eqref{bounded}, \eqref{growth}, \eqref{UC} and \eqref{fusco} so that $L^n(u, \Omega)$ $\Gamma (L^2(\Omega,\R^N))$-converges to $L(u, \Omega)$ for every $u \in W^{1,2}(\Omega, \R^N)$, where the functional $L: W^{1,2}(\Omega, \R^N) \times \mathcal{A}(\Omega) \rightarrow [0,\infty)$ is defined by $L(u, B):=\int\limits_{B} \varphi(u(x), Du(x)) \text{ } dx$ for a non-negative function $\varphi$ satisfying the growth condition $\varphi(s,A) \leq c_2 \lvert A \rvert^2$ for some $c_2>0$ and for all $s \in \R^N, A \in \R^{N \times m}$. Then the sequence $K^n$ defined by $K^n(u):=\int\limits_{\Omega} a_n^{\alpha \beta}(x_0) b_{ij}^n (u(x)) u_{\alpha}^i (x) u_{\beta}^j(x) \, dx$ $\Gamma(L^2(\Omega, \R^N))$-converges to $L(u, \Omega)$ as well for all $u \in W^{1,2}(\Omega, \R^N)$. Note that in particular, the coefficients $a_n^{\alpha \beta}(x_0)$ are chosen independent of $x$.
 \end{lemma}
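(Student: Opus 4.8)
The plan is to prove that \emph{every} subsequential $\Gamma$-limit of $K^n$ coincides with $L$, and then to conclude by the Urysohn property of $\Gamma$-convergence (\cite[Theorem 1.44]{Braides}). First I would note that $K^n$ inherits the structural hypotheses imposed on $L^n$: writing the integrand of $K^n$ as $h^n(s,A):=a_n^{\alpha\beta}(x_0)\,b^n_{ij}(s)\,A^i_\alpha A^j_\beta$, conditions \eqref{bounded}, \eqref{growth} and \eqref{fusco} for $K^n$ follow from the corresponding conditions for $L^n$ just by specializing $x=x_0$, while \eqref{UC} holds trivially since $a_n^{\alpha\beta}(x_0)$ does not depend on $x$. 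Hence, exactly as in the proof of Lemma \ref{local}, \cite[Theorem 2.4]{fusco} applies to $K^n$: every subsequence of $K^n$ has a further subsequence $(n_k)$ such that $K^{n_k}(\cdot,U)$ $\Gamma(L^2(\Omega,\R^N))$-converges to a functional $\bar L(u,U)=\int_U \bar g(x,u(x),Du(x))\,dx$ for every $U\in\mathcal{A}(\Omega)$, with $\bar g$ a Carath\'eodory integrand. Since $h^n$ is independent of $x$, the functionals $K^n$ are invariant under translations of the domain, and this invariance passes to the $\Gamma$-limit, so $\bar g$ may be taken independent of $x$: $\bar g=\bar g(s,A)$. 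Finally, by Lemma \ref{local} itself, $L^n(\cdot,U)$ $\Gamma(L^2(\Omega,\R^N))$-converges to $L(\cdot,U)$ for every $U\in\mathcal{A}(\Omega)$.

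The key step is a comparison of $K^n$ with $L^n$ in a small neighbourhood of $x_0$. Given $\varepsilon>0$ small, use \eqref{UC} to choose $\delta=\delta_\varepsilon>0$ with $B(x_0,\delta)\subset\Omega$ and $|a_n^{\alpha\beta}(x)-a_n^{\alpha\beta}(x_0)|<\varepsilon$ for all $n$, all $\alpha,\beta$ and a.e.\ $x\in B(x_0,\delta)$. Then, for every open $B\subset B(x_0,\delta)$ and $u\in W^{1,2}(\Omega,\R^N)$, an estimate as in Remark \ref{bembounded} gives $|L^n(u,B)-K^n(u,B)|\le C\varepsilon\int_B|Du(x)|^2\,dx$ for a constant $C=C(M,m,N)$, and then \eqref{growth} for $K^n$ gives $|L^n(u,B)-K^n(u,B)|\le (C\varepsilon/c_1)\,K^n(u,B)$; rearranging, $\lambda_\varepsilon^-\,L^n(u,B)\le K^n(u,B)\le\lambda_\varepsilon^+\,L^n(u,B)$ with $\lambda_\varepsilon^{\mp}:=(1\pm C\varepsilon/c_1)^{-1}$, so $\lambda_\varepsilon^\pm\to1$ as $\varepsilon\to0$. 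Passing to the $\Gamma$-limit along $(n_k)$ — using that $\Gamma$-convergence is stable under multiplication by a fixed positive constant — I obtain $\lambda_\varepsilon^-\,L(u,B)\le\bar L(u,B)\le\lambda_\varepsilon^+\,L(u,B)$ for every open $B\subset B(x_0,\delta_\varepsilon)$ and every $u\in W^{1,2}(\Omega,\R^N)$.

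It remains to identify $\bar g$ with the integrand $\varphi$ of $L$. I would do this by a blow-up: apply the last inequality with $B=B_r(y_0)\subset B(x_0,\delta_\varepsilon)$ and with the affine function $u(x):=s_0+A(x-y_0)$, divide by $|B_r(y_0)|$ and let $r\to0$; using the $x$-independence of $\bar g$ and $\varphi$ and their continuity in $(s,A)$ (for $\varphi$, note that by Lemma \ref{local} it agrees almost everywhere with the Carath\'eodory integrand delivered by \cite[Theorem 2.4]{fusco}), this yields $\lambda_\varepsilon^-\,\varphi(s_0,A)\le\bar g(s_0,A)\le\lambda_\varepsilon^+\,\varphi(s_0,A)$ for all $(s_0,A)\in\R^N\times\R^{N \times m}$. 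Letting $\varepsilon\to0$ gives $\bar g=\varphi$, hence $\bar L=L$; as the subsequence of $K^n$ was arbitrary, the Urysohn property of $\Gamma$-convergence gives that $K^n(\cdot,\Omega)$ $\Gamma(L^2(\Omega,\R^N))$-converges to $L(\cdot,\Omega)$, which is the claim.

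The step I expect to be the main obstacle is precisely this last globalization. Condition \eqref{UC} only controls the oscillation of the $a_n^{\alpha\beta}$ in a single, arbitrarily small neighbourhood of $x_0$, so a direct comparison of $K^n$ with $L^n$ is available only on $B(x_0,\delta_\varepsilon)$; the real content is to exploit the translation invariance of $K^n$ — equivalently, the $x$-independence of both limit integrands — to upgrade the equality $\bar L=L$ near $x_0$ to a global one. (When $\varphi$ is $2$-homogeneous in its second variable, which is the situation in all the applications of this lemma, the blow-up can be replaced by an affine rescaling of $\Omega$ onto a small set contained in $B(x_0,\delta_\varepsilon)$: both $K^n$ and $L$ then scale by the same factor $\rho^{m-2}$, so the sandwich transfers verbatim from $B(x_0,\delta_\varepsilon)$ to $\Omega$.) A minor technical point to keep in mind is that the subsequence $(n_k)$ from \cite[Theorem 2.4]{fusco} can, and must, be chosen so that the $\Gamma$-convergence holds simultaneously for all $U\in\mathcal{A}(\Omega)$, which is what makes the localization to small balls legitimate.
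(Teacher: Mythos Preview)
Your argument is correct and takes a genuinely different route from the paper's. The paper constructs, for each $k$, an intermediate functional $\tilde L^n_k$ by tiling $\Omega$ with cubes of side $1/k$ and transplanting onto each cube the coefficients $a_n^{\alpha\beta}(\cdot)$ from a neighbourhood of $x_0$; it then uses \cite[Proposition~12.3]{braidef} together with Lemma~\ref{local} (applied cube by cube after translation) to show $\tilde L^n_k\ \Gamma\text{-converges to }L$ for every $k$, and finally compares $K^n$ with $\tilde L^n_k$ \emph{globally} on $\Omega$ via \eqref{UC}, proving the two $\Gamma$-inequalities for $K^n$ by contradiction. By contrast, you bypass the tiling entirely: you invoke \cite[Theorem~2.4]{fusco} directly on $K^n$ to get a subsequential limit integrand $\bar g$, use translation invariance of $K^n$ to make $\bar g$ independent of $x$, and then the sandwich $\lambda_\varepsilon^{-}L\le \bar L\le \lambda_\varepsilon^{+}L$ --- which you can only establish on the small ball $B(x_0,\delta_\varepsilon)$ --- already pins down $\bar g$ pointwise via blow-up at affine maps, whence $\bar L=L$ on all of $\Omega$.

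What each approach buys: the paper's tiling makes the globalization completely explicit --- the passage from ``near $x_0$'' to ``everywhere'' is achieved by physically covering $\Omega$ with translates of a small cube --- and it produces the liminf and limsup inequalities by hand, without needing to know in advance that the $\Gamma$-limit of $K^n$ has an integral representation with $x$-independent density. Your approach is shorter and more conceptual, but leans more heavily on the representation machinery: you need \cite[Theorem~2.4]{fusco} for $K^n$ (not just for $L^n$), the translation-invariance argument for $x$-independence of $\bar g$ (standard, but domain issues when $\Omega$ is not convex/connected require a little care, e.g.\ extending by zero as in the proof of Lemma~\ref{prop154}), and the observation that $\varphi$ may itself be taken Carath\'eodory. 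Your parenthetical remark that for $2$-homogeneous $\varphi$ one can replace the blow-up by an affine rescaling is a nice shortcut for the applications in Section~\ref{Counterexamples}.
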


\begin{proof}
 Let $k>0$ and define $M_k=x_0+\frac{1}{k} \Z^m$. Then define $\Tilde{L}_k^n$ by
\begin{align*}
 \Tilde{L}_k^n(u,B):=\sum\limits_{z \in M_k} \int\limits_{\left(z+(-\frac{1}{2k}, \frac{1}{2k})^m\right) \cap B} \Tilde{a}^{\alpha \beta}_{n,k}(x) b_{ij}^n(u(x)) u^i_{\alpha}(x) u^j_{\beta}(x) \, dx,
\end{align*}
where $\Tilde{a}^{\alpha \beta}_{n,k} (x)=a_n^{\alpha \beta}(x-z+x_0)$ for $x \in z+(-\frac{1}{2k}, \frac{1}{2k})^m$, $z \in M_k$. By Remark \ref{bembounded} and \cite[Proposition 12.3]{braidef}, for every subsequence $\Tilde{L}_k^{n_l}$ we obtain the existence of a subsequence $\Tilde{L}_k^{n_{l_q}}$ so that $\Gamma(L^2(\Omega, \R^N))-\lim\limits_{q \rightarrow \infty} \Tilde{L}_k^{n_{l_q}}(u,U)$ exists for every $u \in W^{1,2}(\Omega, \R^N)$ and every $U \in \mathcal{A}(\Omega)$ and that
\begin{align*}
 &\Gamma(L^2(\Omega, \R^N))-\lim\limits_{q \rightarrow \infty} \Tilde{L}^{n_{l_q}}_k(u, \Omega)= \sum\limits_{z \in M_k} \Gamma(L^2( \Omega, \R^N ))-\lim\limits_{q \rightarrow \infty} \Tilde{L}^{n_{l_q}}_k\left(u, \left(z+(-\frac{1}{2k}, \frac{1}{2k})^m\right) \cap \Omega\right).
\end{align*}
By defining $v_z(x):=u(x+z-x_0)$ and using Lemma \ref{local}, $\Gamma(L^2(\Omega, \R^N))-\lim\limits_{q \rightarrow \infty} \Tilde{L}^{n_{l_q}}_k(u, \Omega)$ equals
\allowdisplaybreaks\begin{align*}
 & \sum\limits_{z \in M_k} \Gamma(L^2 (\Omega-z+x_0, \R^N ) )-\lim\limits_{q \rightarrow \infty} \int\limits_{\mathclap{\left(x_0+(-\frac{1}{2k}, \frac{1}{2k})^m\right) \cap (\Omega-z+x_0)}} a^{\alpha \beta}_{n_{l_q}}(x) b_{ij}^{n_{l_q}}(v_z(x)) (v_z)^i_{\alpha}(x) (v_z)^j_{\beta}(x) \, dx\\
 =& \sum\limits_{z \in M_k} \int\limits_{\left(x_0+(-\frac{1}{2k}, \frac{1}{2k})^m\right) \cap (\Omega-z+x_0)} \varphi(v_z(x), Dv_z(x)) \, dx\\
 =& \sum\limits_{z \in M_k} \int\limits_{\left(z+(-\frac{1}{2k}, \frac{1}{2k})^m\right) \cap \Omega} \varphi(u(x), Du(x)) \, dx =\, L(u, \Omega),
\end{align*}
so for every subsequence $\Tilde L_k^{n_l}$ of $\Tilde L_k^n$ there exists a further subsequence $\Tilde L_k^{n_{l_q}}$ so that $\Tilde L_k^{n_{l_q}}(\cdot, \Omega)$ $\Gamma(L^2(\Omega, \R^N))$-converges to $L(\cdot, \Omega)$. By Urysohn's property of $\Gamma$-convergence (\cite[Theorem 1.44]{Braides}), the whole sequence $\Tilde L_k^n(\cdot, \Omega)$ also $\Gamma(L^2(\Omega, \R^N))$-converges to $L(\cdot, \Omega)$ for all $k>0$. Then choose an arbitrary $u \in W^{1,2}(\Omega, \R^N)$ and a sequence $u^n$ in $W^{1,2}(\Omega, \R^N)$ converging in $L^2(\Omega, \R^N)$ to $u$. Let $u^{n_l}$ be the subsequence of $u^n$ satisfying $\liminf\limits_{n \rightarrow \infty} K^n(u^n)=\lim\limits_{l \rightarrow \infty} K^{n_l}(u^{n_l})$. Then if $u^{n_l}$ is not bounded in $W^{1,2}(\Omega, \R^N)$, we have $\liminf\limits_{n \rightarrow \infty} K^n(u^n)=\lim\limits_{l \rightarrow \infty} K^{n_l}(u^{n_l}) \geq \limsup\limits_{l \rightarrow \infty} c_1 \lVert Du^{n_l} \rVert_{L^2(\Omega)}^2 = \infty$, since $\lVert u^{n_l} \rVert_{L^2(\Omega)}$ is clearly bounded. So the $\liminf$-inequality is satisfied for $K^n$. Otherwise, choose an arbitrary $\Tilde{\varepsilon}>0$ and define $\varepsilon:=\tfrac{\Tilde{\varepsilon}}{M N^2 m^2 (\sup\limits_{l \in \N} \lVert u^{n_l} \rVert^2_{W^{1,2}(\Omega)}+1)}$. Choose $k$ large enough so that
\begin{align*}
 \lvert a_{n_l}^{\alpha \beta}(x)-a_{n_l}^{\alpha \beta}(x_0) \rvert < \varepsilon \quad \text{for almost every } x \in x_0 + \left(-\frac{1}{2k},\frac{1}{2k} \right)^m \quad \forall \, l \in \N, \, \alpha, \beta \in \left\{1,...,m\right\}.
\end{align*}
Then, $\lvert \Tilde{L}_k^{n_l}(u^{n_l}, \Omega) - K^{n_l}(u^{n_l}) \rvert$ is less than or equal to
\begin{align*}
 &\sum\limits_{z \in M_k} \int\limits_{z+(-\frac{1}{2k}, \frac{1}{2k})^m \cap \Omega} \lvert a_{n_l}^{\alpha \beta}(x-z+x_0)-a_{n_l}^{\alpha \beta}(x_0) \rvert \cdot \lvert b_{ij}^{n_l} (u^{n_l}(x)) \rvert \cdot \lvert (u^{n_l})^i_{\alpha}(x) \rvert \cdot \lvert (u^{n_l})^j_{\beta}(x) \rvert \, dx \\
 \leq& \, M \varepsilon N^2 m^2 \lVert Du^{n_l} \rVert_{L^2(\Omega)}^2 \leq \, \Tilde{\varepsilon}.
 \end{align*}
Now suppose there is $\bar{N} \in \N$ so that $L(u,\Omega)-2 \Tilde{\varepsilon} > K^{n_l}(u^{n_l})$ for all $l >\bar{N}$. Then with the same $k$ as above, we have $K^{n_l}(u^{n_l}) \geq \Tilde{L}_k^{n_l}(u^{n_l}, \Omega) - \Tilde{\varepsilon}$. This implies $\Tilde{L}_k^{n_l}(u^{n_l}, \Omega)<L(u, \Omega)-\Tilde{\varepsilon}$ for all $l>\bar{N}$ which is a contradiction to the $\liminf$-inequality $\liminf\limits_{n \rightarrow \infty} \Tilde{L}_k^n(u^n, \Omega) \geq L(u, \Omega)$. Thus, for every $\Tilde{\varepsilon}>0$ and $\bar{N} \in \N$, we find $l>\bar{N}$ so that $K^{n_l}(u^{n_l}) \geq L(u, \Omega)-2\Tilde{\varepsilon}$. This implies $\liminf\limits_{n \rightarrow \infty} K^n(u^n) =\lim\limits_{l \rightarrow \infty} K^{n_l}(u^{n_l})\geq L(u, \Omega)$ which is the $\liminf$-inequality. For the $\limsup$-inequality let $u^n$ be the recovery sequence for $\Tilde{L}^n_k$. If $u^n$ is not bounded in $W^{1,2}(\Omega, \R^N)$ we have $L(u, \Omega) \geq \limsup\limits_{n \rightarrow \infty} \Tilde{L}^n_k(u^n, \Omega) \geq \limsup\limits_{n \rightarrow \infty} c_1 \lVert Du^n \rVert_{L^2(\Omega)}^2 = \infty$. Thus, the $\limsup$-inequality clearly holds for $K^n$. Otherwise, let $u^{n_l}$ be the subsequence of $u^n$ satisfying $\limsup\limits_{n \rightarrow \infty} K^n(u^n)=\lim\limits_{l \rightarrow \infty} K^{n_l}(u^{n_l})$ and suppose there is $\bar{N} \in \N$ so that $L(u,\Omega)+2 \Tilde{\varepsilon} < K^{n_l}(u^{n_l})$ for all $l >\bar{N}$. With the same computation as above, we get $K^{n_l}(u^{n_l})\leq \Tilde{L}_k^{n_l}(u^{n_l}, \Omega) + \Tilde{\varepsilon}$ for $k$ large enough. This implies $\Tilde{L}_k^{n_l}(u^{n_l}, \Omega)>L(u, \Omega)+\Tilde{\varepsilon}$ for all $l>\bar{N}$ which is a contradiction to the $\limsup$-inequality $\limsup\limits_{n \rightarrow \infty} \Tilde{L}_k^n(u^n, \Omega) \leq L(u, \Omega)$. Thus, for every $\Tilde{\varepsilon}>0$ and $\bar{N} \in \N$, we find $l>\bar{N}$ so that $K^{n_l}(u^{n_l}) \leq L(u, \Omega)+2\Tilde{\varepsilon}$. This implies $\liminf\limits_{l \rightarrow \infty} K^{n_l}(u^{n_l}) \leq L(u, \Omega)$ and so we get $\limsup\limits_{n \rightarrow \infty} K^n(u^n)=\liminf\limits_{l \rightarrow \infty} K^{n_l}(u^{n_l}) \leq L(u, \Omega)$ which is the $\limsup$-inequality for the recovery sequence $u^n$. Altogether, we have now proven that $K^n$ $\Gamma(L^2(\Omega, \R^N))$-converges to $L(\cdot, \Omega)$. 
\end{proof}

\begin{proposition}\label{measure}
 Let $L^n$ be a sequence satisfying \eqref{bounded}, \eqref{growth} and let every subsequence of $L^n$ satisfy \eqref{explrecseq}. Then there exists a subsequence $L^{n_k}$ so that $F(u,E):=\Gamma(L^2(\Omega, \R^N))-\lim\limits_{k \rightarrow \infty} L^{n_k}(u,E)$ exists for all $u \in W^{1,2}(\Omega,\R^N)$ and every Borel set $E \subset \Omega$ and $F(u, \cdot)$ is a Borel measure for every $u \in W^{1,2}(\Omega, \R^N)$.
\end{proposition}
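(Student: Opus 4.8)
The plan is to reduce the statement to the open-set version, \cite[Proposition 12.3]{braidef}, and then bridge from open sets to Borel sets using outer regularity of Lebesgue measure together with the extra hypothesis \eqref{explrecseq}. First I would apply \cite[Proposition 12.3]{braidef} to the family $L^n(\cdot,\cdot)\colon W^{1,2}(\Omega,\R^N)\times\mathcal{A}(\Omega)\to[0,\infty]$ (legitimate since \eqref{bounded} gives the growth bound of Remark \ref{bembounded}) to obtain a subsequence $L^{n_k}$ for which $F(u,U):=\Gamma(L^2(\Omega,\R^N))-\lim_k L^{n_k}(u,U)$ exists for all $u$ and all open $U\subset\Omega$, with $F(u,\cdot)$ the restriction to $\mathcal{A}(\Omega)$ of a Borel measure $\mu_u$. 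As in the proof of Lemma \ref{prop154} (weak $W^{1,2}$-compactness and weak lower semicontinuity of the norm, using \eqref{growth}) together with Remark \ref{bembounded} one gets $c_1\lVert Du\rVert_{L^2(U)}^2\le F(u,U)\le M^2m^2N^2\lVert Du\rVert_{L^2(U)}^2$, hence, by the measure property, the same two-sided bound with any Borel set in place of $U$; in particular $\mu_u$ is finite and $\mu_u\ll\mathcal L^m$. I would then set $F(u,E):=\mu_u(E)$ for Borel $E$, so that the measure property is automatic, and it only remains to check that this $\mu_u(E)$ is the $\Gamma$-limit of $L^{n_k}(u,\cdot)$ at $E$; note that no further extraction is needed, and one does not have to diagonalize over the (uncountably many) Borel sets.

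For $\Gamma\text{-}\limsup_k L^{n_k}(u,E)\le\mu_u(E)$ I would use outer regularity: pick open sets $\Omega\supset U_1\supset U_2\supset\cdots\supset E$ with $\mathcal L^m(U_j\setminus E)\to0$, take a recovery sequence $w^{k,j}\to u$ for $F(u,U_j)=\mu_u(U_j)$ for each $j$, observe that $\limsup_k L^{n_k}(w^{k,j},E)\le\limsup_k L^{n_k}(w^{k,j},U_j)=\mu_u(U_j)$ (the integrand is non-negative and $E\subset U_j$), and diagonalize in $j$ (the $L^2$-topology is metrizable; cf.\ \cite{maso}) to produce $w^k\to u$ with $\limsup_k L^{n_k}(w^k,E)\le\lim_j\mu_u(U_j)=\mu_u(\bigcap_jU_j)=\mu_u(E)$, the last step because $\mathcal L^m(\bigcap_jU_j\setminus E)=0$ and $\mu_u\ll\mathcal L^m$. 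For the reverse inequality I would invoke \eqref{explrecseq}, valid for $L^{n_k}$ by hypothesis: given $\varepsilon>0$ there are an open $U\supset E$ and a sequence $u^k\to u$ realizing the minimum of Remark \ref{bemexpl}, so $\limsup_k L^{n_k}(u^k,E)=\Gamma\text{-}\limsup_k L^{n_k}(u,E)$, with $L^{n_k}(u^k,U)\le L^{n_k}(u^k,E)+\varepsilon$ for all $k$; since $u^k\to u$ in $L^2$ and $U$ is open, the $\liminf$-inequality for $F(u,U)$ gives $\mu_u(E)\le\mu_u(U)=F(u,U)\le\liminf_k L^{n_k}(u^k,U)\le\limsup_k L^{n_k}(u^k,E)+\varepsilon=\Gamma\text{-}\limsup_k L^{n_k}(u,E)+\varepsilon$, and $\varepsilon\to0$ yields $\Gamma\text{-}\limsup_k L^{n_k}(u,E)=\mu_u(E)$.

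It then remains to show $\Gamma\text{-}\liminf_k L^{n_k}(u,E)=\mu_u(E)$; since one always has $\Gamma\text{-}\liminf\le\Gamma\text{-}\limsup=\mu_u(E)$, only the inequality $\ge$ is at issue. The set function $E\mapsto\Gamma\text{-}\liminf_k L^{n_k}(u,E)$ is superadditive on disjoint Borel sets (immediate: $L^{n_k}(v^k,\cdot)$ is a measure and $\liminf$ is superadditive), is dominated by $\mu_u(\cdot)$, and agrees with $\mu_u$ on the open set $\Omega$. If one also proves it is subadditive --- via a De Giorgi-type fundamental estimate joining a near-optimal sequence for $E$ to one for $\Omega\setminus E$ in a thin shell around $\partial E$, the gluing error being controlled through the modulus of continuity \eqref{fusco} and the bound \eqref{bounded} --- then it is finitely additive, and the identity $\Gamma\text{-}\liminf_k L^{n_k}(u,E)+\Gamma\text{-}\liminf_k L^{n_k}(u,\Omega\setminus E)=\mu_u(\Omega)=\mu_u(E)+\mu_u(\Omega\setminus E)$ together with the domination $\Gamma\text{-}\liminf\le\mu_u$ forces $\Gamma\text{-}\liminf_k L^{n_k}(u,E)=\mu_u(E)$, completing the proof.

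The step I expect to be the main obstacle is precisely this last one, the $\Gamma\text{-}\liminf$ inequality (equivalently, the additivity of $\Gamma\text{-}\liminf_k L^{n_k}(u,\cdot)$ across $\{E,\Omega\setminus E\}$). Unlike in the open-set case, a near-optimal sequence for $\Gamma\text{-}\liminf_k L^{n_k}(u,E)$ need not be bounded in $W^{1,2}(\Omega)$ away from $E$, so the joining of recovery sequences near $\partial E$ must be carried out with care; this is where \eqref{fusco} (and, via Remark \ref{bembounded}, \eqref{bounded}) enters to make the gluing error negligible, and where one really uses the assumption that \emph{every} subsequence of $L^n$ --- not merely $L^n$ itself --- satisfies \eqref{explrecseq}.
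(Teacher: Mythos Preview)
Your treatment of the $\Gamma$-$\limsup$ is essentially correct and mirrors the paper: \eqref{explrecseq} applied to the subsequence $L^{n_k}$ forces $\Gamma\text{-}\limsup_k L^{n_k}(u,E)\ge\mu_u(E)$, and outer regularity gives the reverse inequality. (The paper avoids your diagonalization by fixing $\varepsilon>0$, choosing one open $U\supset E$ with $\nu_u(U)\le\nu_u(E)+\varepsilon$ via regularity of finite Borel measures on Polish spaces, and taking the recovery sequence for $U$.)

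The genuine gap is in your $\Gamma$-$\liminf$ step. Your proposed De Giorgi-type gluing invokes \eqref{fusco}, but \eqref{fusco} is \emph{not} among the hypotheses of the Proposition --- only \eqref{bounded}, \eqref{growth}, and \eqref{explrecseq} for every subsequence are assumed. Moreover, for a general Borel set $E$ the phrase ``a thin shell around $\partial E$'' has no workable meaning (consider $E$ with $\mathcal{L}^m(\partial E)>0$, or $E$ dense in $\Omega$), so the joining construction is ill-posed even if one had \eqref{fusco}.

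The paper bypasses this difficulty entirely via Urysohn's property of $\Gamma$-convergence. One takes an \emph{arbitrary} subsequence $L^{n_{k_l}}$ of $L^{n_k}$; by compactness of $\Gamma$-convergence (\cite[Proposition~7.9]{braidef}) there is a further subsequence $L^{n_{k_{l_q}}}$ along which the $\Gamma$-limit at $(u,E)$ exists. Now one applies exactly your two $\Gamma$-$\limsup$ inequalities to \emph{this} sub-sub-sequence: since along it $\Gamma\text{-}\liminf=\Gamma\text{-}\limsup$, they pin the common value down as $\nu_u(E)$. Every subsequence of $L^{n_k}$ thus has a further subsequence $\Gamma$-converging to the same value, and Urysohn gives $\Gamma\text{-}\lim_k L^{n_k}(u,E)=\nu_u(E)$. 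This is precisely where the hypothesis that \emph{every} subsequence satisfy \eqref{explrecseq} is used: it must be available on the arbitrary sub-sub-sequence $L^{n_{k_{l_q}}}$, not merely on $L^{n_k}$.
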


\begin{proof}
 By \cite[Proposition 12.3]{braidef}, there exists a subsequence $L^{n_k}$ of $L^n$ so that $F(u,U)=\Gamma-\lim\limits_{k \rightarrow \infty} L^{n_k}(u,U)$ exists for all $U \in \mathcal{A}(\Omega)$ and $u \in W^{1,2}(\Omega, \R^N)$ and $F(u, \cdot)$ is the restriction of a Borel measure $\nu_u$ to $\mathcal{A}(\Omega)$ for every $u \in W^{1,2}(\Omega, \R^N)$. Let $E \subset \Omega$ be a Borel set. Let $L^{n_{k_l}}$ be an arbitrary subsequence of $L^{n_k}$. Then by \cite[Proposition 7.9]{braidef} there exists a further subsequence $L^{n_{k_{l_q}}}$ so that $\Gamma(L^2(\Omega, \R^N))-\lim\limits_{q \rightarrow \infty} L^{n_{k_{l_q}}}(u,E)$ exists for all $u \in W^{1,2}(\Omega, \R^N)$. Let $u \in W^{1,2}(\Omega, \R^N)$. Choose $\varepsilon>0$ and $U \in \mathcal{A}(\Omega)$ and $u^{n_{k_{l_q}}} \rightarrow u$ in $L^2(\Omega, \R^N)$ so that $E \subset U$, $\Gamma(L^2(\Omega, \R^N))-\limsup\limits_{q \rightarrow \infty} L^{n_{k_{l_q}}}(u,E)=\limsup\limits_{q \rightarrow \infty} L^{n_{k_{l_q}}}(u^{n_{k_{l_q}}},E)$ and that $L^{n_{k_{l_q}}}(u^{n_{k_{l_q}}},E) \geq L^{n_{k_{l_q}}}(u^{n_{k_{l_q}}},U) - \varepsilon$ for all $q \in \N$. Thus, we achieve
\begin{align*}
 &\Gamma(L^2(\Omega,\R^N))-\lim\limits_{q \rightarrow \infty} L^{n_{k_{l_q}}}(u,E) \geq \limsup\limits_{q \rightarrow \infty} L^{n_{k_{l_q}}}(u^{n_{k_{l_q}}},E) \geq \limsup\limits_{q \rightarrow \infty} L^{n_{k_{l_q}}}(u^{n_{k_{l_q}}},U) - \varepsilon \\
\geq& \liminf\limits_{q \rightarrow \infty} L^{n_{k_{l_q}}}(u^{n_{k_{l_q}}},U) - \varepsilon \geq \nu_u(U) - \varepsilon \geq \nu_u(E) - \varepsilon
\end{align*}
and by the arbitrariness of $\varepsilon$ we deduce $\Gamma(L^2(\Omega,\R^N))-\lim\limits_{q \rightarrow \infty} L^{n_{k_{l_q}}}(u,E) \geq \nu_u(E)$. Now choose $\varepsilon>0$ and $U \in \mathcal{A}(\Omega)$ so that $E \subset U$ and $\nu_u(U) \leq \nu_u(E)+\varepsilon$, which is possible by the regularity of Borel measures on Polish spaces \cite[Theorem 1.16, p 320]{Elstrodt}. Then for the recovery sequence $u^{n_{k_{l_q}}}$ in $U$, we get
\begin{align*}
 &\Gamma(L^2(\Omega,\R^N))-\lim\limits_{q \rightarrow \infty} L^{n_{k_{l_q}}}(u,E) \leq \liminf\limits_{q \rightarrow \infty} L^{n_{k_{l_q}}}(u^{n_{k_{l_q}}},E) \\
\leq &\limsup\limits_{q \rightarrow \infty} L^{n_{k_{l_q}}}(u^{n_{k_{l_q}}},U) \leq \nu_u(U) \leq \nu_u(E)+\varepsilon 
\end{align*}
and by the arbitrariness of $\varepsilon$ we deduce $\Gamma(L^2(\Omega,\R^N))-\lim\limits_{q \rightarrow \infty} L^{n_{k_{l_q}}}(u,E) \leq \nu_u(E)$, which implies $\Gamma(L^2(\Omega,\R^N))-\lim\limits_{q \rightarrow \infty} L^{n_{k_{l_q}}}(u,E) = \nu_u(E)$. Thus, for every subsequence $L^{n_{k_l}}$ of $L^{n_k}$ there exists a further subsequence $L^{n_{k_{l_q}}}$ so that $\Gamma(L^2(\Omega,\R^N))-\lim\limits_{q \rightarrow \infty} L^{n_{k_{l_q}}}(u,E) = \nu_u(E)$. By Urysohn's property of $\Gamma$-convergence (\cite[Theorem 1.44]{Braides}), the whole sequence $L^{n_k}(u, E)$ also $\Gamma(L^2(\Omega, \R^N))$-converges to $\nu_u(E)$, which concludes the proof.
\end{proof}

\begin{lemma}\label{local2}
 Let $L^n(u,\Omega)$ be a sequence of energy functionals $\Gamma(L^2(\Omega, \R^N))$-converging to the energy functional $L(u,\Omega)=\int\limits_{\Omega} \varphi(u(x), Du(x)) \, dx$ for a non-negative function $\varphi$ satisfying the growth condition $\varphi(s,A) \leq c_2 \lvert A \rvert^2$ for some $c_2>0$ and for all $s \in \R^N, A \in \R^{N \times m}$. Let $L^n$ satisfy the conditions \eqref{bounded}, \eqref{growth}, \eqref{fusco}, and let every subsequence of $L^n$ satisfy \eqref{explrecseq}. Then $L^n(u,E)$ $\Gamma(L^2(\Omega, \R^N))$-converges to $L(u,E)$ for every Borel set $E \subset \Omega$ and every $u \in W^{1,2}(\Omega, \R^N)$.
\end{lemma}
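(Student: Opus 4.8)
The plan is to combine Lemma~\ref{local}, which already handles open sets, with Proposition~\ref{measure}, which for a suitable subsequence produces a $\Gamma$-limit on all Borel sets that is a Borel measure, and then to identify this measure by a standard measure-theoretic uniqueness argument, closing with Urysohn's property of $\Gamma$-convergence.

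First I would fix $u \in W^{1,2}(\Omega, \R^N)$ and reduce, via Urysohn's property of $\Gamma$-convergence (\cite[Theorem 1.44]{Braides}), to showing that every subsequence of $L^n(u,E)$ admits a further subsequence $\Gamma(L^2(\Omega,\R^N))$-converging to $L(u,E)$. So let $L^{n_j}$ be an arbitrary subsequence. Since any subsequence of $L^{n_j}$ is a subsequence of $L^n$, the sequence $L^{n_j}$ still satisfies \eqref{bounded}, \eqref{growth}, \eqref{fusco}, still has the property that each of its subsequences satisfies \eqref{explrecseq}, and still $\Gamma(L^2(\Omega,\R^N))$-converges to $L(\cdot,\Omega)$ on $\Omega$. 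Applying Proposition~\ref{measure} to $L^{n_j}$ yields a further subsequence $L^{n_{j_k}}$ such that $F(u,E) := \Gamma(L^2(\Omega,\R^N))-\lim_{k \to \infty} L^{n_{j_k}}(u,E)$ exists for every Borel set $E \subset \Omega$ and $F(u,\cdot)$ is a Borel measure.

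Next I would identify $F(u,\cdot)$ on open sets. The subsequence $L^{n_{j_k}}$ satisfies the hypotheses of Lemma~\ref{local}, so $L^{n_{j_k}}(u,U)$ $\Gamma(L^2(\Omega,\R^N))$-converges to $L(u,U) = \int_U \varphi(u(x),Du(x))\,dx$ for every $U \in \mathcal{A}(\Omega)$; by uniqueness of the $\Gamma$-limit this forces $F(u,U) = \int_U \varphi(u(x),Du(x))\,dx$ for all open $U \subset \Omega$. Now both $F(u,\cdot)$ and the set function $\mu(A) := \int_A \varphi(u(x),Du(x))\,dx$ are finite Borel measures on $\Omega$: the growth bound $\varphi(s,A) \le c_2 \lvert A \rvert^2$ gives $\varphi(u(\cdot),Du(\cdot)) \in L^1(\Omega)$, so $\mu$ is a genuine (Lebesgue-absolutely-continuous) finite measure with $\mu(\Omega) \le c_2 \lVert Du \rVert_{L^2(\Omega)}^2 < \infty$, and $F(u,\Omega) = \mu(\Omega)$ by the open-set identity. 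Since $F(u,\cdot)$ and $\mu$ agree on the $\pi$-system of open subsets of $\Omega$, which generates the Borel $\sigma$-algebra, and agree on the whole space, a standard Dynkin-class argument gives $F(u,E) = \mu(E) = L(u,E)$ for every Borel set $E \subset \Omega$. Thus every subsequence of $L^n(u,E)$ has a further subsequence $\Gamma$-converging to $L(u,E)$, and Urysohn's property concludes the proof.

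The main difficulty I anticipate is bookkeeping rather than conceptual: one must verify carefully that the hypotheses \eqref{bounded}, \eqref{growth}, \eqref{fusco}, \eqref{explrecseq} and the $\Gamma$-convergence of $L^n(\cdot,\Omega)$ to $L(\cdot,\Omega)$ are genuinely inherited along the two rounds of subsequence extraction, so that Proposition~\ref{measure} and Lemma~\ref{local} may legitimately be applied to $L^{n_{j_k}}$, and one must confirm that $\mu$ and $F(u,\cdot)$ are honest finite Borel measures before invoking uniqueness. Once finiteness and the $\pi$-system property are in place, the measure-identification step is routine.
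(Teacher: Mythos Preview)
Your argument is correct, and it takes a somewhat different route from the paper's. The paper proves Lemma~\ref{local2} by a direct two-sided $\varepsilon$-estimate: after passing to a subsequence along which the $\Gamma$-limit on $E$ exists (via \cite[Proposition 7.9]{braidef}), it uses condition~\eqref{explrecseq} to find an open $U \supset E$ and a minimizing sequence with $L^{n_{k_l}}(u^{n_{k_l}},E) \geq L^{n_{k_l}}(u^{n_{k_l}},U) - \varepsilon$ to obtain the lower bound against $L(u,E)$, and then uses outer regularity of Lebesgue measure together with dominated convergence to find $U \supset E$ with $L(u,U) \leq L(u,E) + \varepsilon$ for the upper bound. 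In other words, the paper essentially reruns the core estimates of Proposition~\ref{measure} in this specific setting rather than invoking that proposition as a black box.

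Your approach is more modular: you feed the subsequence directly into Proposition~\ref{measure} to obtain a Borel measure $F(u,\cdot)$, identify $F(u,\cdot)$ with $\mu = \int_{(\cdot)} \varphi(u,Du)\,dx$ on open sets via Lemma~\ref{local}, and then conclude equality on all Borel sets by the $\pi$-$\lambda$ uniqueness theorem for finite measures agreeing on a generating $\pi$-system. This buys you a cleaner separation of concerns and avoids repeating the $\varepsilon$-bookkeeping that already went into Proposition~\ref{measure}; the price is that you must verify the measure-theoretic hypotheses (finiteness of both measures, $\pi$-system generation) and the inheritance of hypotheses along subsequences, which you do. Both arguments close with Urysohn's property in the same way.
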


\begin{proof}
 By Lemma \ref{local}, $L^n(u,U)$ $\Gamma(L^2(\Omega, \R^N))$-converges to $L(u,U)$ for every $U \in \mathcal{A}(\Omega)$. Let $E \subset \Omega$ be a Borel set. Let $L^{n_k}$ be an arbitrary subsequence of $L^n$. Then by \cite[Proposition 7.9]{braidef}, there exists a further subsequence $L^{n_{k_l}}$ so that $\Gamma(L^2(\Omega, \R^N))-\lim\limits_{l \rightarrow \infty} L^{n_{k_l}}(u,E)$ exists for all $u \in W^{1,2}(\Omega, \R^N)$. Let $u \in W^{1,2}(\Omega, \R^N)$. Choose $\varepsilon>0$, $U \in \mathcal{A}(\Omega)$ and $u^{n_{k_l}} \rightarrow u$ in $L^2(\Omega, \R^N)$ so that $E \subset U$, $\Gamma(L^2(\Omega, \R^N))-\limsup\limits_{l \rightarrow \infty} L^{n_{k_l}}(u,E)=\limsup\limits_{l \rightarrow \infty} L^{n_{k_l}}(u^{n_{k_l}},E)$ and that $L^{n_{k_l}}(u^{n_{k_l}},E) \geq L^{n_{k_l}}(u^{n_{k_l}},U) - \varepsilon$ for all $l \in \N$. Thus, in the same way as in the proof of Lemma \ref{measure} we achieve
\begin{align*}
 \Gamma(L^2(\Omega,\R^N))-\lim\limits_{l \rightarrow \infty} L^{n_{k_l}}(u,E) \geq L(u,E) - \varepsilon
\end{align*}
and by the arbitrariness of $\varepsilon$ we deduce $\Gamma(L^2(\Omega,\R^N))-\lim\limits_{l \rightarrow \infty} L^{n_{k_l}}(\cdot,E) \geq L(\cdot,E)$. Now choose $\varepsilon>0$ and $U \in \mathcal{A}(\Omega)$ so that $E \subset U$ and $L(u,U) \leq L(u,E)+\varepsilon$. Such a set $U$ exists because by the regularity of the Lebesgue measure we have $\mathcal{L}(E)=\inf \{ \mathcal{L}(U); U \in \mathcal{A}(\Omega), E \subset U \}$. Now let $U_l$ be a sequence in $\mathcal{A}(\Omega)$ so that $E \subset U_l$ for every $l \in \N$ and $\lim\limits_{l \rightarrow \infty} \mathcal{L}(U_l)=\mathcal{L}(E)$. Then we have
\begin{align*}
 \lim\limits_{l \rightarrow \infty} L(u,U_l \setminus E) =\lim\limits_{l \rightarrow \infty} \int\limits_{\Omega} \varphi(u(x),Du(x)) \cdot \mathbbm{1}_{U_l \setminus E}(x) \, dx =0
\end{align*}
by Lebesgue's Dominated Convergence Theorem (\cite[Theorem 1.8]{lieblos}). Now choose $L \in \N$ so that $L(u, U_L \setminus E) < \varepsilon$. Then with $U=U_L$ we achieve $L(u,U) = L(u,E) + L(u, U \setminus E) < L(u,E) + \varepsilon$. Then for the recovery sequence $u^{n_{k_l}}$ in $U$, we achieve $\Gamma(L^2(\Omega,\R^N))-\lim\limits_{l \rightarrow \infty} L^{n_{k_l}}(u,E) \leq L(u,E)+\varepsilon$ as in the proof of Lemma \ref{measure} and by the arbitrariness of $\varepsilon$ we deduce $\Gamma(L^2(\Omega,\R^N))-\lim\limits_{l \rightarrow \infty} L^{n_{k_l}}(\cdot,E) \leq L(\cdot,E)$, which implies $\Gamma(L^2(\Omega,\R^N))-\lim\limits_{l \rightarrow \infty} L^{n_{k_l}}(\cdot,E) = L(\cdot,E)$. Thus, for every subsequence $L^{n_k}$ of $L^n$ there exists a further subsequence $L^{n_{k_l}}$ so that $\Gamma(L^2(\Omega,\R^N))-\lim\limits_{l \rightarrow \infty} L^{n_{k_l}}(\cdot,E) = L(\cdot,E)$. By Urysohn's property of $\Gamma$-convergence (\cite[Theorem 1.44]{Braides}), the whole sequence $L^n(\cdot, E)$ also $\Gamma(L^2(\Omega, \R^N))$-converges to $L(\cdot, E)$, which concludes the proof.
\end{proof}

\begin{korollar}\label{recseq}
  Let $L^n(u,\Omega)$ be a sequence of energy functionals $\Gamma(L^2(\Omega, \R^N))$-converging to the energy functional $L(u,\Omega)=\int\limits_{\Omega} \varphi(u(x), Du(x)) \, dx$ for a non-negative function $\varphi$ satisfying the growth condition $\varphi(s,A) \leq c_2 \lvert A \rvert^2$ for some $c_2>0$ and for all $s \in \R^N, A \in \R^{N \times m}$. Let $L^n$ satisfy the conditions \eqref{bounded}, \eqref{growth}, \eqref{fusco}, let every subsequence of $L^n$ satisfy \eqref{explrecseq}, and let $u^n$ be a recovery sequence for $u$ in $\Omega$. Then $u^n$ is a recovery sequence for $u$ in $E$ for every Borel set $E \subset \Omega$.
\end{korollar}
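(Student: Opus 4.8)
The plan is to reduce everything to Lemma~\ref{local2}, which already tells us that $L^n(\cdot, B)$ $\Gamma(L^2(\Omega,\R^N))$-converges to $L(\cdot, B)$ for \emph{every} Borel set $B \subset \Omega$; in particular the $\Gamma$-liminf inequality $\liminf_{n\to\infty} L^n(v^n, B) \geq L(v, B)$ holds along any sequence $v^n \to v$ in $L^2(\Omega,\R^N)$. Thus the only thing left to prove is the matching limsup estimate $\limsup_{n\to\infty} L^n(u^n, E) \leq L(u, E)$ for the given recovery sequence $u^n$, since $u^n \to u$ in $L^2(\Omega,\R^N)$ is already part of the hypothesis. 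The idea is to obtain this by subtracting from the recovery property on all of $\Omega$ the $\Gamma$-liminf inequality on the complement $\Omega \setminus E$.

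First I would record that, being a recovery sequence in $\Omega$, the sequence $u^n$ satisfies $\limsup_{n\to\infty} L^n(u^n,\Omega) \leq L(u,\Omega)$, and that $L(u,\Omega) \leq c_2 \lVert Du \rVert_{L^2(\Omega)}^2 < \infty$ by the growth assumption on $\varphi$; hence $L^n(u^n,\Omega)$ is finite for all large $n$. Next, by \eqref{growth} the integrand defining $L^n(u^n, \cdot)$ is non-negative and integrable, so $B \mapsto L^n(u^n, B)$ is the restriction of a finite (for large $n$) Borel measure; in particular $L^n(u^n, \Omega) = L^n(u^n, E) + L^n(u^n, \Omega\setminus E)$ with both summands non-negative and finite. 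Then for $n$ large
\begin{align*}
 \limsup_{n\to\infty} L^n(u^n, E) &= \limsup_{n\to\infty}\big( L^n(u^n,\Omega) - L^n(u^n,\Omega\setminus E)\big) \\
 &\leq \limsup_{n\to\infty} L^n(u^n,\Omega) - \liminf_{n\to\infty} L^n(u^n,\Omega\setminus E) \\
 &\leq L(u,\Omega) - L(u,\Omega\setminus E) = L(u,E),
\end{align*}
where in the last line the first term is estimated by the recovery property in $\Omega$, the second term by Lemma~\ref{local2} applied to the Borel set $\Omega\setminus E$ (its $\Gamma$-liminf inequality), and the final equality is additivity of the integral defining $L$. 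Combining this with the $\Gamma$-liminf inequality of Lemma~\ref{local2} for $E$ itself gives $\lim_{n\to\infty} L^n(u^n, E) = L(u,E)$, so $u^n$ is a recovery sequence for $u$ in $E$.

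I expect the only point that needs care to be the legitimacy of the subtraction $L^n(u^n,\Omega) - L^n(u^n,\Omega\setminus E)$, i.e.\ ruling out an $\infty-\infty$ situation; this is exactly why the finiteness of $L(u,\Omega)$ (via the quadratic growth of $\varphi$), the recovery property in $\Omega$, and the non-negativity coming from \eqref{growth} are invoked, so that for all sufficiently large $n$ the three set functions $L^n(u^n,\Omega)$, $L^n(u^n,E)$, $L^n(u^n,\Omega\setminus E)$ take finite values and are genuinely additive. Everything else is a direct application of Lemma~\ref{local2}, so the conditions \eqref{bounded}, \eqref{fusco} and \eqref{explrecseq} enter only as the hypotheses under which that lemma was established.
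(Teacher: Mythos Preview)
Your proposal is correct and follows essentially the same approach as the paper: both arguments use Lemma~\ref{local2} to get the $\Gamma$-liminf inequality on the complement $\Omega\setminus E$, combine it with the recovery property on $\Omega$, and exploit additivity of $L^n(u^n,\cdot)$ and $L(u,\cdot)$. The only cosmetic difference is that the paper phrases the same subtraction as a proof by contradiction, while you argue directly; your added remark about ruling out $\infty-\infty$ is a welcome piece of care that the paper leaves implicit.
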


\begin{proof}
By Proposition \ref{local2}, we get $\Gamma(L^2(\Omega, \R^N))-\lim\limits_{n \rightarrow \infty} L^n(u,E) = L(u,E)$ for every Borel set $E \subset \Omega$. Let $E \subset \Omega$ be a Borel set and assume $\limsup\limits_{n \rightarrow \infty} L^n(u^n, E) > \Gamma(L^2(\Omega, \R^N))-\lim\limits_{n \rightarrow \infty} L^n(u,E)=L(u,E)$. This implies
\begin{align*}
&\Gamma(L^2(\Omega, \R^N))-\lim\limits_{n \rightarrow \infty} L^n(u, \Omega \setminus E) = L(u, \Omega )-L(u, E) \\
 \geq &\limsup\limits_{n \rightarrow \infty} L^n(u^n, E) + \liminf\limits_{n \rightarrow \infty} L^n(u^n, \Omega \setminus E) -L(u, E)> \liminf\limits_{n \rightarrow \infty} L^n(u^n, \Omega \setminus E) 
\end{align*}
which is a contradiction to the $\liminf$-inequality in $\Omega \setminus E$.

\end{proof}

\begin{bemerkung}\label{recseqgamma}
 In Corollary \ref{recseq}, the main requirement is the $\Gamma(L^2(\Omega, \R^N))$-convergence to $L(\cdot, E)$ for every Borel set $E$, as can be seen in the proof. So instead of requiring the conditions \eqref{bounded}, \eqref{growth}, \eqref{fusco} and \eqref{explrecseq} for every subsequence, it is enough to prescribe the $\Gamma(L^2(\Omega, \R^N))$-convergence for every Borel set $E$.
\end{bemerkung}

For the independence of values in the image of $u$ of the approximating functionals, we will need the following notations: for $u \in W^{1,2}(\Omega, \R^N)$ we define the sets
\begin{align*}
 N_k^{z,u}:=\{ x; u(x) \in \tfrac{1}{k}z+[-\tfrac{1}{2k},\tfrac{1}{2k})^N \} \quad \text{for } k>0, z \in \Z^N
\end{align*}
and, for a given sequence $L^n(u,B)=\int\limits_{B} a_n^{\alpha \beta}(x) b_{ij}^n (u(x)) u_{\alpha}^i (x) u_{\beta}^j(x) \, dx$, we define
\begin{align*}
 L^n_{k,u}(v,B):=\sum\limits_{z \in \Z^N} \int\limits_{N_k^{z,u} \cap B} a_n^{\alpha \beta}(x) b_{ij}^n (v(x)-\tfrac{1}{k}z) v^i_{\alpha}(x) v^j_{\beta}(x) \, dx.
\end{align*}

\begin{bemerkung}\label{Vererbung}
 If the sequence $L^n$ is defined by $L^n(v):=\int\limits_{\Omega} a_n^{\alpha \beta} b_{ij}^n v^i_{\alpha}(x) v_j^{\beta}(x) \, dx$, we have $L^n(v,B)=L^n_{k,u}(v,B)$. Hence, by Corollary \ref{exfus}, $L^n$ and $L^n_{k,u}$ satisfy the condition \eqref{explrecseq}. Furthermore, the conditions \eqref{bounded}, \eqref{growth} and \eqref{UC} clearly are inherited by $L^n_{k,u}$ if $L^n$ satisfies these conditions. 
\end{bemerkung}

\begin{lemma}\label{indepb}
 Let $L^n : W^{1,2}(\Omega, \R^N) \times \mathcal{A}(\Omega) \rightarrow [0,\infty)$ be a sequence of functionals defined by $L^n(u,B):=\int\limits_{B} a_n^{\alpha \beta}(x) b_{ij}^n (u(x)) u_{\alpha}^i (x) u_{\beta}^j(x) \text{ } dx$ satisfying the conditions \eqref{bounded}, \eqref{growth}, \eqref{UC} and \eqref{fusco}, so that every subsequence of $L^n$ and every subsequence of $L^n_{k,u}$ satisfies \eqref{explrecseq} for every $k>0$, $u \in W^{1,2}(\Omega, \R^N)$ and so that $L^n(u, \Omega)$ $\Gamma (L^2(\Omega,\R^N))$-converges to $L(u, \Omega)$ for every $u \in W^{1,2}(\Omega, \R^N)$, where the functional $L: W^{1,2}(\Omega, \R^N) \times \mathcal{A}(\Omega) \rightarrow [0,\infty)$ is defined by $L(u, B):=\int\limits_{B} \varphi(Du(x)) \text{ } dx$ for a non-negative function $\varphi$ satisfying the growth condition $\varphi(s,A) \leq c_2 \lvert A \rvert^2$ for some $c_2>0$ and for all $s \in \R^N, A \in \R^{N \times m}$. Then for the sequence $K^n$ defined by $K^n(u,B):=\int\limits_{B} a_n^{\alpha \beta}(x) b_{ij}^n (0) u_{\alpha}^i (x) u_{\beta}^j(x) \, dx$ $\Gamma(L^2(\Omega, \R^N))-\lim\limits_{n \rightarrow \infty} K^n(u, \Omega)$ exists for every $u \in W^{1,2}(\Omega, \R^N)$ and $\Gamma(L^2(\Omega, \R^N))-\lim\limits_{n \rightarrow \infty} K^n(u, \Omega)=L(u, \Omega)$ for every $u \in W^{1,2}(\Omega, \R^N)$. Note that in particular, the coefficients $b_{ij}^n(0)$ are chosen independent of $u(x)$.
 \end{lemma}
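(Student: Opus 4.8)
The plan is to interpolate between $L^n$ and $K^n$ by means of the functionals $L^n_{k,u}$ introduced above, exploiting that the target integrand $\varphi$ depends only on the gradient. For $z\in\Z^N$ and $k>0$ set
\[
M^{n,z,k}(v,B):=\int_B a_n^{\alpha\beta}(x)\,b_{ij}^n\!\big(v(x)-\tfrac1kz\big)v^i_\alpha(x)v^j_\beta(x)\,dx=L^n\!\big(v-\tfrac1kz,B\big),
\]
so that $L^n_{k,u}(v,B)=\sum_{z\in\Z^N}M^{n,z,k}(v,N_k^{z,u}\cap B)$, the sets $N_k^{z,u}$ being a partition of $\Omega$. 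I would then proceed in two steps: first show that $L^n_{k,u}(\,\cdot\,,\Omega)$ $\Gamma(L^2(\Omega,\R^N))$-converges to $L(\,\cdot\,,\Omega)$ for every fixed $k$, and then compare $K^n$ with $L^n_{k,u}$ and let $k\to\infty$.

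\emph{Step 1.} Because $\Gamma$-convergence is preserved by the homeomorphism $v\mapsto v-\tfrac1kz$ of $L^2(\Omega,\R^N)$ and $\varphi(Dv)$ is translation invariant, $M^{n,z,k}(\,\cdot\,,\Omega)=L^n(\,\cdot-\tfrac1kz,\Omega)$ $\Gamma$-converges to $L(\,\cdot-\tfrac1kz,\Omega)=L(\,\cdot\,,\Omega)$; moreover $M^{n,z,k}$ inherits \eqref{bounded}, \eqref{growth}, \eqref{fusco} and, via the same homeomorphism, the property that every subsequence satisfies \eqref{explrecseq}. Hence Lemma \ref{local2} applies to $M^{n,z,k}$ and gives $\Gamma(L^2(\Omega,\R^N))$-convergence of $M^{n,z,k}(\,\cdot\,,E)$ to $L(\,\cdot\,,E)$ for every Borel set $E\subset\Omega$. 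On the other hand $L^n_{k,u}$ satisfies \eqref{bounded}, \eqref{growth} (Remark \ref{Vererbung}) and every subsequence satisfies \eqref{explrecseq} by hypothesis, so Proposition \ref{measure} yields a subsequence of $L^n_{k,u}$ whose $\Gamma$-limit $F(v,\cdot)$ is a Borel measure. For a Borel set $E$, disjointness of the $N_k^{z,u}$ gives $L^n_{k,u}(\,\cdot\,,E\cap N_k^{z,u})=M^{n,z,k}(\,\cdot\,,E\cap N_k^{z,u})$, whose $\Gamma$-limit is $\int_{E\cap N_k^{z,u}}\varphi(Dv)\,dx$, and summing over $z$ by $\sigma$-additivity of $F(v,\cdot)$ gives $F(v,E)=\int_E\varphi(Dv)\,dx=L(v,E)$. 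Repeating this for every subsequence and using Urysohn's property of $\Gamma$-convergence (\cite[Theorem 1.44]{Braides}) yields $\Gamma(L^2(\Omega,\R^N))$-convergence of $L^n_{k,u}(\,\cdot\,,E)$ to $L(\,\cdot\,,E)$ for every Borel $E$, in particular for $E=\Omega$.

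\emph{Step 2.} On $N_k^{z,u}$ one has $|v(x)-\tfrac1kz|\le|v(x)-u(x)|+\tfrac{\sqrt N}{2k}$, so \eqref{fusco} gives
\[
\big|L^n_{k,u}(v,B)-K^n(v,B)\big|\le\int_B\omega\!\Big(|v(x)-u(x)|+\tfrac{\sqrt N}{2k}\Big)\big(1+|Dv(x)|^2\big)\,dx .
\]
For the liminf inequality, let $v^n\to u$ in $L^2$; passing to a subsequence realizing the liminf I may assume $\sup_n\|Dv^n\|_{L^2(\Omega)}<\infty$ (otherwise $K^n(v^n,\Omega)\ge c_1\|Dv^n\|_{L^2}^2\to\infty$ by \eqref{growth}) and, after a further subsequence, $v^n\to u$ a.e. Given $\gamma>0$, pick by Egorov a Borel set $\hat S$ of arbitrarily small Lebesgue measure with $v^n\to u$ uniformly on $\Omega\setminus\hat S$, and then $\eta$ small and $k$ large so that the displayed integral over $\Omega\setminus\hat S$ is at most $\gamma$ for large $n$; since $K^n(v^n,\Omega)\ge K^n(v^n,\Omega\setminus\hat S)$ and $L^n_{k,u}(\,\cdot\,,\Omega\setminus\hat S)$ $\Gamma$-converges to $L(\,\cdot\,,\Omega\setminus\hat S)$ by Step 1, this gives $\liminf_n K^n(v^n,\Omega)\ge L(u,\Omega\setminus\hat S)-\gamma$, and letting $|\hat S|\to0$ and $\gamma\to0$ we obtain $\liminf_n K^n(v^n,\Omega)\ge L(u,\Omega)$. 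For the limsup inequality, fix $k$ and let $w^{n,k}$ be a recovery sequence for $L^n_{k,u}(\,\cdot\,,\Omega)$; by Corollary \ref{recseq} it recovers $L$ on every Borel set, so $c_1\int_E|Dw^{n,k}|^2\,dx\le L^n_{k,u}(w^{n,k},E)\to\int_E\varphi(Du)\,dx\le c_2\int_E|Du|^2\,dx$ for every Borel $E$, which forces $\{|Dw^{n,k}|^2\}_n$ to be equi-integrable (otherwise a union $\bigcup_{n\ge m}E_n$ of sets of rapidly shrinking measure would carry at least a fixed amount of this energy along a subsequence, contradicting the estimate since $\int_{\bigcup_{n\ge m}E_n}|Du|^2\,dx\to0$). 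Since $w^{n,k}\to u$ in $L^2$ and $\omega$ is bounded, continuous and vanishes at $0$, the Vitali convergence theorem applied to the displayed bound gives $\limsup_n K^n(w^{n,k},\Omega)\le L(u,\Omega)+\omega\!\big(\tfrac{\sqrt N}{2k}\big)\big(|\Omega|+\tfrac1{c_1}L(u,\Omega)\big)$, and a standard diagonal choice $v^n:=w^{n,k_n}$ with $k_n\to\infty$, $v^n\to u$ in $L^2$, yields $\limsup_n K^n(v^n,\Omega)\le L(u,\Omega)$. Combined with the liminf inequality this proves that $K^n(\,\cdot\,,\Omega)$ $\Gamma(L^2(\Omega,\R^N))$-converges to $L(\,\cdot\,,\Omega)$.

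The two delicate points are Step 1 — which makes the comparison possible and relies essentially on $\varphi$ depending only on $Du$ — and the error term $\int\omega(|v^n-u|+\tfrac{\sqrt N}{2k})(1+|Dv^n|^2)\,dx$ in Step 2: since \eqref{fusco} is informative only where $v^n$ is close to $u$, the main obstacle is to exclude concentration of $|Dv^n|^2$ on the small set where $|v^n-u|$ is large. On the liminf side this is circumvented by Egorov together with the monotonicity $K^n(v^n,\Omega)\ge K^n(v^n,\Omega\setminus\hat S)$, and on the limsup side by the equi-integrability of the recovery sequences of $L^n_{k,u}$, itself obtained from Corollary \ref{recseq}.
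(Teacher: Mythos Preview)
Your proof is correct and follows essentially the same two–step strategy as the paper: Step~1 combines Proposition~\ref{measure}, Lemma~\ref{local2} applied to the shifted functionals, and Urysohn's property exactly as in the paper, and Step~2 uses the comparison bound from \eqref{fusco} together with Egorov and Corollary~\ref{recseq} in the same way. The only organisational difference is in the $\limsup$ half of Step~2: the paper fixes $\tilde\varepsilon$, chooses the Egorov set $A_\varepsilon$, and then bounds $\int_{A_\varepsilon}|Du^{n_l}|^2$ directly via Corollary~\ref{recseq} on that single set, whereas you first extract full equi-integrability of $|Dw^{n,k}|^2$ from Corollary~\ref{recseq} applied to \emph{all} Borel sets and then run a Vitali/Egorov product argument followed by a diagonal over $k$; both routes use the same key tool (Corollary~\ref{recseq} via Remark~\ref{recseqgamma}) to rule out concentration of the gradients on small sets.
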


\begin{proof}
Choose an arbitrary function $u \in W^{1,2}(\Omega, \R^N)$. As in \cite{Borel}, we can choose a representative of $u$ so that $u_i$ is a Borel function for every $i \in \{1,..,m\}$. Thus, $N_k^{z,u}$ is a Borel set for all $k>0$, $z \in \Z^N$, since it is the intersection of Borel sets. By Remark \ref{Vererbung} and Proposition \ref{measure}, for every subsequence $L_{k,u}^{n_l}$ we obtain the existence of a subsequence $L_{k,u}^{n_{l_q}}$ so that $\Gamma(L^2(\Omega, \R^N))-\lim\limits_{q \rightarrow \infty} L_{k,u}^{n_{l_q}}(v,E)$ exists for every Borel set $E \subset \Omega$ and that
\begin{align*}
 \Gamma(L^2(\Omega, \R^N))-\lim\limits_{q \rightarrow \infty} L^{n_{l_q}}_{k,u}(v, E) = \sum\limits_{z \in \Z^N} \Gamma(L^2( \Omega, \R^N ))-\lim\limits_{q \rightarrow \infty} L^{n_{l_q}}_{k,u}(v, N_k^{z,u}  \cap E)
\end{align*}
for every Borel set $E \subset \Omega$. With $v$ fixed and $w_z(x):=v(x)-\frac{1}{k}z$, we observe with Lemma \ref{local2} that $\Gamma(L^2(\Omega, \R^N))-\lim\limits_{q \rightarrow \infty} L^{n_{l_q}}_{k,u}(v,E)$ equals
\begin{align*}
 \sum\limits_{z \in \Z^N} \Gamma(L^2(\Omega, \R^N))-\lim\limits_{q \rightarrow \infty} L^{n_{l_q}}(w_z, N_k^{z,u} \cap E) =\sum\limits_{z \in \Z^N} \int\limits_{N_k^{z,u} \cap E} \varphi(Dw_z(x)) \, dx = \, L(v,E),
\end{align*}
so for every subsequence $L_{k,u}^{n_l}$ of $L_{k,u}^n$ there exists a further subsequence $L_{k,u}^{n_{l_q}}$ so that $L_{k,u}^{n_{l_q}}(\cdot, E)$ $\Gamma(L^2(\Omega, \R^N))$-converges to $L(\cdot, E)$ for every Borel set $E \subset \Omega$. By Urysohn's property of $\Gamma$-convergence (\cite[Theorem 1.44]{Braides}), the whole sequence $L_{k,u}^n(\cdot, E)$ also $\Gamma(L^2(\Omega, \R^N))$-converges to $L(\cdot, E)$ for all $k>0$. Let $u^n$ be an arbitrary sequence in $W^{1,2}(\Omega, \R^N)$ converging to $u$ in $L^2(\Omega, \R^N)$. Then let $u^{n_l}$ be a subsequence satisfying $\lim\limits_{l \rightarrow \infty} K^{n_l}(u^{n_l}, \Omega)=\liminf\limits_{n \rightarrow \infty} K^n(u^n, \Omega)$ and $u^{n_l}(x) \rightarrow u(x)$ for almost every $x \in \Omega$. Then by Egorov's Theorem (\cite[Theorem 5.3, p 252]{Elstrodt}) and the regularity of the Lebesgue measure, for every $\varepsilon>0$ there is an open set $A_{\varepsilon}$ satisfying $\lvert \Omega \setminus A_{\varepsilon} \rvert > \lvert \Omega \rvert - \varepsilon$ so that $u^{n_l}$ uniformly converges to $u$ in $\Omega \setminus A_{\varepsilon}$. Then if $u^{n_l}$ is not bounded in $W^{1,2}(\Omega, \R^N)$, we have
\begin{align*}
 \liminf\limits_{n \rightarrow \infty} K^n(u^n, \Omega)=\lim\limits_{l \rightarrow \infty} \int\limits_{\Omega} a_{n_l}^{\alpha \beta}(x) b_{ij}^{n_l}(0) (u^{n_l})^i_{\alpha}(x) (u^{n_l})^j_{\beta}(x) \, dx \geq \limsup\limits_{l \rightarrow \infty} c_1 \lVert Du^{n_l} \rVert_{L^2(\Omega)}^2 = \infty,
\end{align*}
since $\lVert u^{n_l} \rVert_{L^2(\Omega)}$ is clearly bounded. Thus, we can assume that $u^{n_l}$ is bounded in $W^{1,2}(\Omega, \R^N)$. Then choose an arbitrary $\Tilde{\varepsilon}>0$ and $\varepsilon<\tfrac{\Tilde{\varepsilon}}{\sup\limits_{l \in \N} \lVert Du_{n_l} \rVert_{L^2(\Omega)}^2+\mathcal{L}(\Omega)}$ so that $\int\limits_{A_{\varepsilon}} c_2 \lvert Du(x) \rvert^2 \, dx < \frac{\Tilde{\varepsilon}}{2}$. This is possible since $\lvert A_{\varepsilon} \rvert \rightarrow 0$ for $\varepsilon \rightarrow 0$ and thus, by Lebesgue's Dominated Convergence Theorem (\cite[Theorem 1.8]{lieblos}), we have $\lim\limits_{\varepsilon \rightarrow 0} \int\limits_{A_{\varepsilon}} c_2 \lvert Du (x) \rvert^2 \, dx = \lim\limits_{\varepsilon \rightarrow 0} \int\limits_\Omega c_2 \lvert Du(x) \rvert^2 \mathbbm{1}_{A_{\varepsilon}}(x) \, dx=0$. Then choose $k$ large enough so that $\omega(\lvert y \rvert) < \varepsilon$ for every $y \in [-\tfrac{1}{k}, \tfrac{1}{k})^N$ and $\Tilde{N}$ large enough so that for all $l>\Tilde{N}$ we have $\lvert u^{n_l}(x)-u(x) \vert < \frac{1}{2k}$ for all $x \in \Omega \setminus A_{\varepsilon}$. We observe for $l>\Tilde{N}$ that $\lvert L_{k,u}^{n_l}(u^{n_l}, \Omega \setminus A_{\varepsilon}) - K^{n_l}(u^{n_l}, \Omega \setminus A_{\varepsilon}) \rvert$ is less than or equal to
\begin{align*}
 \sum\limits_{z \in \Z^N} \int\limits_{N_k^{z,u} \cap (\Omega \setminus A_{\varepsilon})} \omega(\lvert u^{n_l}(x)-\frac{1}{k}z \rvert) \cdot (1+\lvert Du^{n_l}(x) \rvert^2) \, dx \leq \, \varepsilon \cdot (\mathcal{L}(\Omega)+\lVert Du^{n_l}(x) \rVert_{L^2(\Omega)}^2) \leq  \, \Tilde{\varepsilon}.
\end{align*}
Now suppose there is $\bar{N} \in \N$ so that $L(u,\Omega) - 2 \Tilde{\varepsilon} > K^{n_l}(u^{n_l}, \Omega)$ for all $l>\bar{N}$. We know that for $l>\Tilde{N}$ and the same $k$ as above $K^{n_l}(u^{n_l}, \Omega \setminus A_{\varepsilon}) \geq L^{n_l}_{k,u} (u^{n_l}, \Omega \setminus A_{\varepsilon}) - \Tilde{\varepsilon}$ holds. This implies for all $l>\max\{\bar{N}, \Tilde{N}\}$
\begin{align*}
 L^{n_l}_{k,u}(u^{n_l}, \Omega \setminus A_{\varepsilon}) < L(u, \Omega) - \Tilde{\varepsilon} \leq \int\limits_{\Omega \setminus A_{\varepsilon}} \varphi(Du(x)) \, dx + c_2 \int\limits_{A_{\varepsilon}} \lvert Du(x) \rvert^2 \, dx - \Tilde{\varepsilon}  \leq L(u, \Omega \setminus A_{\varepsilon})- \frac{\Tilde{\varepsilon}}{2},
\end{align*}
which is a contradiction to the $\liminf$-inequality $L(u, \Omega \setminus A_{\varepsilon}) \leq \liminf\limits_{n \rightarrow \infty} L^n_{k,u} (u^n, \Omega \setminus A_{\varepsilon})$. Thus, for every $\Tilde{\varepsilon}>0$ and $\bar{N} \in \N$, we find $l>\bar{N}$ so that $K^{n_l}(u^{n_l}, \Omega) \geq L(u, \Omega)-2\Tilde{\varepsilon}$. This implies $\liminf\limits_{n \rightarrow \infty} K^n(u^n, \Omega) =\lim\limits_{l \rightarrow \infty} K^{n_l}(u^{n_l}, \Omega) \geq L(u, \Omega)$, which is the $\liminf$-inequality. For the $\limsup$-inequality let $u^n$ be a recovery sequence for $L^n_{k,u}(u, \Omega)$. If $u^n$ is not bounded in $W^{1,2}(\Omega, \R^N)$, we have $L(u, \Omega) \geq \limsup\limits_{n \rightarrow \infty} L^n_{k,u}(u^n, \Omega) \geq \limsup\limits_{n \rightarrow \infty} c_1 \lVert Du^n \rVert^2_{L^2(\Omega)} = \infty$. Thus, the $\limsup$-inequality clearly holds for $K^n$. Otherwise, with the same computation as above, we get the existence of $\Tilde{N} \in \N$ so that for $l>\Tilde{N}$ and $k$ large enough $\lvert L_{k,u}^{n_l}(u^{n_l}, \Omega \setminus A_{\varepsilon}) - K^{n_l}(u^{n_l}, \Omega \setminus A_{\varepsilon}) \rvert < \Tilde{\varepsilon}$ for the subsequence $u^{n_l}$, which satisfies $\lim\limits_{l \rightarrow \infty} K^{n_l}(u^{n_l}, \Omega)=\limsup\limits_{n \rightarrow \infty} K^n(u^n, \Omega)$ and $u^{n_l}(x) \rightarrow u(x)$ for almost every $x \in \Omega$ and which is still a recovery sequence for $L^{n_l}_{k,u}(\cdot, \Omega)$. Furthermore,
\begin{align*}
 \frac{\Tilde{\varepsilon}}{2}> c_2 \int\limits_{A_{\varepsilon}} \lvert Du(x) \rvert^2 \, dx \geq L(u, A_{\varepsilon}) \geq \limsup\limits_{l \rightarrow \infty} L^{n_l}(u^{n_l}, A_{\varepsilon}) \geq c_1 \limsup\limits_{l \rightarrow \infty} \int\limits_{A_{\varepsilon}} \lvert Du^{n_l}(x) \rvert^2 \, dx 
\end{align*}
since, by Corollary \ref{recseq} and Remark \ref{recseqgamma}, $u^{n_l}$ is a recovery sequence for $u$ in every Borel set $E \subset \Omega$ so that
\begin{align}
 \limsup\limits_{l \rightarrow \infty} \int\limits_{A_{\varepsilon}} \lvert Du^{n_l}(x) \rvert^2 \, dx < \tfrac{\Tilde{\varepsilon}}{2 c_1}. \label{veroeff1}
\end{align}
By Corollary \ref{recseq} and Remark \ref{recseqgamma}, $u^{n_l}$ is a recovery sequence for $L^{n_l}_{k,u}(u, \Omega \setminus A_{\varepsilon})$ as well. 
Summarizing, by this and the non-negativity of $\varphi$ and by using Remark \ref{bembounded} and \eqref{veroeff1} in $A_{\varepsilon}$ we achieve
\begin{align*}
 &\limsup\limits_{n \rightarrow \infty} K^n(u^n, \Omega)=\lim\limits_{l \rightarrow \infty} K^{n_l}(u^{n_l},\Omega) \leq \limsup\limits_{l \rightarrow \infty} K^{n_l}(u^{n_l}, \Omega \setminus A_{\varepsilon}) + \limsup\limits_{l \rightarrow \infty} K^{n_l}(u^{n_l}, A_{\varepsilon})  \\
 \leq& \limsup\limits_{l \rightarrow \infty} L^{n_l}_{k,u}(u^{n_l}, \Omega \setminus A_{\varepsilon}) + \Tilde{\varepsilon} + M^2m^2 N^2 \frac{\Tilde{\varepsilon}}{2 c_1} \leq \,  L(u, \Omega) + \Tilde{\varepsilon} (1+ \tfrac{M^2m^2 N^2}{2 c_1}).
\end{align*}
By $\Tilde{\varepsilon} \rightarrow 0$, this implies the $\limsup$-inequality $\limsup\limits_{n \rightarrow \infty} K^n(u^n, \Omega) \leq L(u, \Omega)$ and thus, $K^n( u, \Omega)$ $\Gamma(L^2(\Omega, \R^N))$-converges to $L( u, \Omega)$.
\end{proof}

\begin{bemerkung}\label{Vererbung2}
 Clearly, in Lemma \ref{indepa} and Lemma \ref{indepb}, the sequence $K^n$ inherits the conditions \eqref{bounded}, \eqref{growth} and \eqref{UC} from the sequence $L^n$.
\end{bemerkung}

\section{Counterexamples for the $\Gamma$-Density of General Dirichlet Energies}\label{Counterexamples}

In Theorem \ref{bild} and Theorem \ref{urbild}, we have seen that not every functional $L \in \mathcal{E}(\Omega)$ can be approximated by energy functionals of functions $u: R_1 \rightarrow R_2$ if one of the Riemannian manifolds $R_1$ and $R_2$ is a Euclidean domain. Now we will see that under the assumptions of Chapter \ref{PropApprox} an approximation is not possible even if both Riemannian manifolds are not Euclidean domains.

\begin{satz}\label{Gegenbeispielallg}
Not every functional $L \in \mathcal{E}(\Omega)$ can be approximated by energy functionals of the form $L^n(u)=\int\limits_{\Omega} a_n^{\alpha \beta}(x) b^n_{i j} (u(x)) u^i_{\alpha}(x) u^j_{\beta}(x) \text{ } dx$ which satisfy the conditions of Lemma \ref{indepb}.
\end{satz}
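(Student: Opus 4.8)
The plan is to exhibit one $s$-independent Finsler integrand whose energy lies in $\mathcal{E}(\Omega)$ but which the reduction results of Chapter~\ref{PropApprox} force any admissible approximating family to realize, in the limit, as a quadratic form of the rigid ``product'' shape $\hat a^{\alpha\beta}\hat b_{ij}Y^i_\alpha Y^j_\beta$ with symmetric $\hat a,\hat b$ — and to pick the integrand so that this is impossible. Concretely I would take $m=N=2$, $\Omega=(0,1)^2$, and $\varphi(A):=\lvert A\rvert^2+\lambda\det A$ with $0<\lambda<2$. Then $\varphi$ is $2$-homogeneous, convex (its Hessian on $\R^{2\times 2}$ decomposes into two $2\times 2$ blocks, each with eigenvalues $2\pm\lambda>0$), and $\bigl(1-\tfrac{\lambda}{2}\bigr)\lvert A\rvert^2\le\varphi(A)\le\bigl(1+\tfrac{\lambda}{2}\bigr)\lvert A\rvert^2$ because $\lvert\det A\rvert\le\tfrac{1}{2}\lvert A\rvert^2$; hence $L(u):=\int_\Omega\varphi(Du)\,dx\in\mathcal{E}(\Omega)$. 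The decisive feature is that in $\varphi$ the coefficient of $A^1_1A^2_2$ is $+\lambda$ while the coefficient of $A^1_2A^2_1$ is $-\lambda$.

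Suppose $L$ could be $\Gamma(L^2(\Omega,\R^2))$-approximated by a sequence $L^n(u)=\int_\Omega a_n^{\alpha\beta}(x)b^n_{ij}(u(x))u^i_\alpha u^j_\beta\,dx$ satisfying the hypotheses of Lemma~\ref{indepb} (which already presuppose that $L$ has an integrand depending on $Du$ alone, as here). I would first apply Lemma~\ref{indepb}: $K^n(u,B):=\int_B a_n^{\alpha\beta}(x)b^n_{ij}(0)u^i_\alpha u^j_\beta\,dx$ still $\Gamma$-converges to $L$, and by Remark~\ref{Vererbung2} it inherits \eqref{bounded}, \eqref{growth} and \eqref{UC}, while \eqref{fusco} holds for it trivially since it carries no $u(x)$-dependence. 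Thus $K^n$ satisfies the hypotheses of Lemma~\ref{indepa} (with its ``$b$'' the $s$-independent coefficient $b^n_{ij}(0)$), and that lemma yields that $\widetilde K^n(u):=\int_\Omega a_n^{\alpha\beta}(x_0)b^n_{ij}(0)u^i_\alpha u^j_\beta\,dx$ also $\Gamma$-converges to $L$. Now $\widetilde K^n$ has \emph{constant} coefficients $c^n_{ij\alpha\beta}:=a_n^{\alpha\beta}(x_0)b^n_{ij}(0)$, bounded by $M^2$ via \eqref{bounded} and satisfying $c^n_{ij\alpha\beta}\xi^i_\alpha\xi^j_\beta\ge c_1\lvert\xi\rvert^2$ via \eqref{growth} (taken at $x=x_0$, $s=0$).

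Next, passing to a subsequence along which $a_n^{\alpha\beta}(x_0)\to\hat a^{\alpha\beta}$ and $b^n_{ij}(0)\to\hat b_{ij}$ for all indices (finitely many bounded scalar sequences), so $c^n_{ij\alpha\beta}\to\hat c_{ij\alpha\beta}:=\hat a^{\alpha\beta}\hat b_{ij}$, I would identify the $\Gamma$-limit of $\widetilde K^n$ directly. The constant sequence is a recovery sequence, since $\widetilde K^n(u)\to\int_\Omega\hat c_{ij\alpha\beta}u^i_\alpha u^j_\beta\,dx$; and if $u^n\to u$ in $L^2$ with $\liminf_n\widetilde K^n(u^n)<\infty$, then $\{Du^n\}$ is bounded in $L^2$ by coercivity, hence $Du^n\rightharpoonup Du$ exactly as in Lemma~\ref{prop154}, and $\widetilde K^n(u^n)=\int_\Omega\hat c_{ij\alpha\beta}(u^n)^i_\alpha(u^n)^j_\beta\,dx+o(1)$ with the $\liminf$ of the first term at least $\int_\Omega\hat c_{ij\alpha\beta}u^i_\alpha u^j_\beta\,dx$, because $\xi\mapsto\hat c_{ij\alpha\beta}\xi^i_\alpha\xi^j_\beta$ is a nonnegative, hence convex, quadratic form and so its integral is weakly $L^2$-lower semicontinuous. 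By uniqueness of the $\Gamma$-limit this forces $\int_\Omega\varphi(Du)\,dx=\int_\Omega\hat a^{\alpha\beta}\hat b_{ij}u^i_\alpha u^j_\beta\,dx$ for every $u\in W^{1,2}(\Omega,\R^2)$, and testing with $u(x)=Yx$ gives $\varphi(Y)=\hat a^{\alpha\beta}\hat b_{ij}Y^i_\alpha Y^j_\beta$ for all $Y\in\R^{2\times 2}$. Since $a_n^{\alpha\beta}$ and $b^n_{ij}$ are an inverse metric tensor and a metric tensor, they are symmetric, hence so are $\hat a,\hat b$; but then in $\hat a^{\alpha\beta}\hat b_{ij}Y^i_\alpha Y^j_\beta$ the coefficient of $Y^1_1Y^2_2$ equals $\hat a^{12}\hat b_{12}+\hat a^{21}\hat b_{21}=2\hat a^{12}\hat b_{12}$, which is exactly the coefficient of $Y^1_2Y^2_1$, namely $\hat a^{21}\hat b_{12}+\hat a^{12}\hat b_{21}=2\hat a^{12}\hat b_{12}$. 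Comparing with $\varphi$ gives $\lambda=-\lambda$, i.e.\ $\lambda=0$, contradicting $\lambda>0$.

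I expect two points to be where the care is needed. One is the bookkeeping in chaining Lemmas~\ref{indepb} and~\ref{indepa}: one must apply them in this order and verify that the intermediate functional $K^n$ still satisfies the hypotheses of Lemma~\ref{indepa}, which is where Remark~\ref{Vererbung2} and the vacuity of \eqref{fusco} for $u$-independent integrands enter. The other, and the conceptual crux, is the choice of anisotropy: the condition $\varphi((e_1 | e_2))>\varphi((e_2 | e_1))$ used in Theorems~\ref{counteriso}, \ref{bild} and~\ref{urbild} is \emph{not} incompatible with a symmetric product form, since for such a form one computes $\varphi((e_1 | e_2))-\varphi((e_2 | e_1))=(\hat a_{11}-\hat a_{22})(\hat b_{11}-\hat b_{22})$, which can be positive; one must instead install a determinant-type term, whose sign reverses when the two column indices within a single pair of entries are interchanged — a symmetry a symmetric product form can never break.
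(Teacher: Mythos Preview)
Your reduction via Lemmas~\ref{indepb} and~\ref{indepa} to constant coefficients is sound, as is your identification of the $\Gamma$-limit along a convergent subsequence of $(a_n^{\alpha\beta}(x_0),b^n_{ij}(0))$. The gap is in the last step: you assume that $\hat a$ and $\hat b$ are symmetric because they ``are an inverse metric tensor and a metric tensor'', but nothing in the hypotheses \eqref{bounded}--\eqref{explrecseq} of Lemma~\ref{indepb} enforces this, and the paper does not impose it --- indeed, the closing example of Chapter~\ref{Counterexamples} explicitly uses $a_n^{12}=-a_n^{21}$ and $b_{12}^n=-b_{21}^n$. Without symmetry your chosen integrand is \emph{not} a counterexample: with
\[
a=\begin{pmatrix}1&\mu\\-\mu&1\end{pmatrix},\qquad b=\begin{pmatrix}1&\nu\\-\nu&1\end{pmatrix}
\]
one computes directly $a^{\alpha\beta}b_{ij}A^i_\alpha A^j_\beta=\lvert A\rvert^2+2\mu\nu\det A$, so taking $2\mu\nu=\lambda$ realizes $\varphi(A)=\lvert A\rvert^2+\lambda\det A$ \emph{exactly} in product form, and the constant sequence $L^n\equiv L$ satisfies all conditions of Lemma~\ref{indepb} (Corollary~\ref{exfus} covers \eqref{explrecseq}). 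This is precisely the mechanism behind the paper's final example, where $\tfrac12\lvert A\rvert^2+\tfrac12(A_1\wedge A_2)_3$ is represented in this way. Your own coefficient computation confirms this: once one allows $\hat a^{21}=-\hat a^{12}$ and $\hat b_{21}=-\hat b_{12}$, the coefficient of $Y^1_1Y^2_2$ becomes $2\hat a^{12}\hat b_{12}$ while that of $Y^1_2Y^2_1$ becomes $-2\hat a^{12}\hat b_{12}$, matching $+\lambda$ and $-\lambda$ consistently.

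The paper's proof sidesteps this by choosing $\varphi(A)=\lvert A\rvert^2+A^2_1A^2_2$, whose extra term couples the two \emph{column} indices of the \emph{same} row (a ``diagonal $b$''--type anisotropy) rather than a determinant. After the same reduction and Lemma~\ref{constrecseq}, testing with $x^1e_1,\,x^1e_2,\,x^2e_1,\,x^2e_2$ and the sums $x^1e_1+x^2e_1$, $x^1e_2+x^2e_2$ yields $\lim(a_n^{12}+a_n^{21})b_{11}^n=0$, $\lim(a_n^{12}+a_n^{21})b_{22}^n=1$ together with $\lim a_n^{11}b_{11}^n=\lim a_n^{11}b_{22}^n=1$, which are incompatible --- and no symmetry is ever invoked.
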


\begin{korollar}\label{corGegenbeispielallg}
 Not every functional $L \in \mathcal{E}(\Omega)$ can be approximated by energy functionals of the form $L^n(u)=\int\limits_{\Omega} a_n^{\alpha \beta}(x) b^n_{i j} u^i_{\alpha}(x) u^j_{\beta}(x) \text{ } dx$ which satisfy the conditions of Lemma \ref{indepa}.
\end{korollar}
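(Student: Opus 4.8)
The plan is to deduce Corollary~\ref{corGegenbeispielallg} from Theorem~\ref{Gegenbeispielallg}. The key observation is that an approximating sequence of the restricted form $L^n(u)=\int_\Omega a_n^{\alpha\beta}(x) b^n_{ij} u^i_\alpha(x) u^j_\beta(x)\,dx$ satisfying the conditions of Lemma~\ref{indepa} can, by Lemma~\ref{indepa}, be replaced by a sequence whose coefficients are constant in $x$ and in $u(x)$, and this new sequence turns out to satisfy \emph{all} hypotheses of Lemma~\ref{indepb}. Hence its $\Gamma$-limit lies in the class of functionals already excluded by Theorem~\ref{Gegenbeispielallg}, which forces the conclusion.

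Concretely, I would fix the functional $L\in\mathcal{E}(\Omega)$ with $s$-independent integrand $\varphi$ satisfying $\varphi(A)\le c_2|A|^2$ provided by the proof of Theorem~\ref{Gegenbeispielallg}, i.e. one which cannot be approximated by functionals of the form there satisfying the conditions of Lemma~\ref{indepb}. Arguing by contradiction, suppose $L^n(u)=\int_\Omega a_n^{\alpha\beta}(x) b^n_{ij} u^i_\alpha u^j_\beta\,dx$ satisfies \eqref{bounded}, \eqref{growth}, \eqref{UC}, \eqref{fusco} and $\Gamma(L^2(\Omega,\R^N))$-converges to $L$. Reading $b^n_{ij}$ as the constant function $b^n_{ij}(s):=b^n_{ij}$ exhibits $L^n$ in the form required by Lemma~\ref{indepa}; applying that lemma yields that $K^n(u):=\int_\Omega a_n^{\alpha\beta}(x_0) b^n_{ij} u^i_\alpha u^j_\beta\,dx$ also $\Gamma(L^2(\Omega,\R^N))$-converges to $L$, now with coefficients independent of $x$ (and of $u(x)$).

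Next I would check that $K^n$ satisfies every hypothesis of Lemma~\ref{indepb}. Conditions \eqref{bounded}, \eqref{growth} and \eqref{UC} are inherited by Remark~\ref{Vererbung2}; condition \eqref{fusco} holds with $\omega\equiv0$ since $K^n$ does not depend on the values of $u$; by Remark~\ref{Vererbung}, $K^n_{k,u}=K^n$ for every $k>0$ and every $u$ because $b^n_{ij}$ is constant; and every subsequence of $K^n$ (equivalently of $K^n_{k,u}$) is again a sequence of functionals with coefficients constant in $x$ and $u$ satisfying \eqref{bounded} and \eqref{growth}, hence satisfies \eqref{explrecseq} by Corollary~\ref{exfus}. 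Since the limit $L$ has $s$-independent integrand, $K^n$ is now an approximating sequence for $L$ of exactly the type treated in Theorem~\ref{Gegenbeispielallg} and satisfying the conditions of Lemma~\ref{indepb}, contradicting Theorem~\ref{Gegenbeispielallg}. Therefore no such $L^n$ can exist, which proves the corollary.

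I expect the only genuinely delicate step to be the verification of \eqref{explrecseq} for $K^n$: this works precisely because, after the passage through Lemma~\ref{indepa}, the coefficients are constant in $x$, so Lemma~\ref{constrecseq} (constant sequences are recovery sequences) and hence Corollary~\ref{exfus} apply; for the original $L^n$, whose coefficient $a_n^{\alpha\beta}(x)$ still depends on $x$, this argument is not available. This is why routing the proof through Lemma~\ref{indepa} is indispensable rather than merely cosmetic, while all remaining steps amount to bookkeeping about which of the conditions survives the reduction.
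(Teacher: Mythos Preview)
Your proof is correct and follows essentially the same route as the paper: fix the specific $L$ from Theorem~\ref{Gegenbeispielallg}, apply Lemma~\ref{indepa} to replace the given $L^n$ by one with constant coefficients, and then derive the contradiction. The only cosmetic difference is that the paper, after obtaining the constant-coefficient sequence, simply says ``the rest of the proof is analogous to the proof of Theorem~\ref{Gegenbeispielallg}'' (i.e.\ it repeats the explicit test-function computation with Lemma~\ref{constrecseq}), whereas you instead verify condition~\eqref{explrecseq} via Corollary~\ref{exfus} so as to invoke Theorem~\ref{Gegenbeispielallg} as a black box; both routes lead to the same contradiction.
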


\begin{bemerkung}
 Corollary \ref{corGegenbeispielallg} restricts the approximating metrics $L^n$ to those metrics whose integrands are independent of $u(x)$. By this effort, we do not need the condition \eqref{explrecseq} any more. Since this condition is difficult to prove, it is nice to be able to omit it.
\end{bemerkung}

\begin{proof}[Proof of Theorem \ref{Gegenbeispielallg}]
Let $L$ be defined by $L(u):=\int\limits_{\Omega} \varphi(Du(x)) \text{ } dx$ and $\varphi(A)=\sum\limits_{i=1}^m \sum\limits_{j=1}^N (a^i_j)^2 - \frac{1}{2}(a^2_1)^2 - \frac{1}{2}(a^2_2)^2 + \frac{1}{2}(a^2_1+a^2_2)^2$. Note that $\varphi$ is independent of $u(x)$. Then if there were a sequence $L^n$ $\Gamma(L^2(\Omega,\R^N))$-converging to $L$ and satisfying the conditions of Lemma \ref{indepb}, with Lemma \ref{indepa} and Lemma \ref{indepb} we could find another sequence $L^n$ defined by $L^n(u):=\int\limits_{\Omega} a_n^{\alpha \beta} b^n_{i j} u^i_{\alpha}(x) u^j_{\beta}(x) \, dx$, so that $L^n(u)$ $\Gamma(L^2(\Omega,\R^N))$-converges to $L(u)$ for every $u \in W^{1,2}(\Omega, \R^N)$. By Lemma \ref{constrecseq}, we can choose the constant sequence $u$ as recovery sequence.
Now choose the functions $u(x):=x^1 \cdot e_1$, $v(x):=x^1 \cdot e_2$, $w(x):=x^2 \cdot e_1$, $z(x):=x^2 \cdot e_2$, then we get
\begin{align*}
 \int\limits_{\Omega} \varphi(Du(x)) \, dx &= \lim\limits_{n \rightarrow \infty} L^n(u)=\lim\limits_{n \rightarrow \infty} \int\limits_{\Omega} a_n^{11} b_{11}^n \, dx, \\
\int\limits_{\Omega} \varphi(Dv(x)) \, dx &= \lim\limits_{n \rightarrow \infty} L^n(v)=\lim\limits_{n \rightarrow \infty} \int\limits_{\Omega} a_n^{11} b_{22}^n \, dx, \\
\int\limits_{\Omega} \varphi(Dw(x)) \, dx &= \lim\limits_{n \rightarrow \infty} L^n(w)=\lim\limits_{n \rightarrow \infty} \int\limits_{\Omega} a_n^{22} b_{11}^n \, dx, \\
\int\limits_{\Omega} \varphi(Dz(x)) \, dx &= \lim\limits_{n \rightarrow \infty} L^n(z)=\lim\limits_{n \rightarrow \infty} \int\limits_{\Omega} a_n^{22} b_{22}^n \, dx, 
\end{align*}
but with the above choice of $\varphi$ we obtain $\varphi(Du(s)+Dw(s)) = 2$ and $\varphi(Dv(s)+Dz(s)) = 3$. Then altogether we have that $2 \cdot \mathcal{L}(\Omega)$ equals
\begin{align*}
& \int\limits_{\Omega} \varphi(Du(x)+Dw(x)) \, dx = \lim\limits_{n \rightarrow \infty} L^n(u+w) = \lim\limits_{n \rightarrow \infty} \int\limits_{\Omega} a_n^{11} b_{11}^n + a_n^{22} b_{11}^n + a_n^{12} b_{11}^n + a_n^{21}b_{11}^n \, dx \\
=& \lim\limits_{n \rightarrow \infty} \int\limits_{\Omega} \varphi(Du(x)) + \varphi(Dw(x)) + (a_n^{12} + a_n^{21})b_{11}^n \, dx = \lim\limits_{n \rightarrow \infty} \int\limits_{\Omega} 1 + 1 + (a_n^{12} + a_n^{21})b_{11}^n \, dx, 
\end{align*}
so we can see that $\lim\limits_{n \rightarrow \infty}  (a_n^{12} + a_n^{21})b_{11}^n = 0$. On the other hand, in the same way we achieve that $3 \cdot \mathcal{L}(\Omega)$ equals
\begin{align*}
  & \int\limits_{\Omega} \varphi(Dv(x)+Dz(x)) \, dx = \lim\limits_{n \rightarrow \infty} L^n(v+z) \\
=& \lim\limits_{n \rightarrow \infty} \int\limits_{\Omega} \varphi(Dv(x)) + \varphi(Dz(x)) + (a_n^{12} + a_n^{21})b_{22}^n \, dx = \lim\limits_{n \rightarrow \infty} \int\limits_{\Omega} 1 + 1 + (a_n^{12} + a_n^{21})b_{22}^n \, dx, 
\end{align*}
so we can see that $\lim\limits_{n \rightarrow \infty}  (a_n^{12} + a_n^{21})b_{22}^n = 1$. This means that, for $n$ large enough, we have $\lvert a_n^{12} + a_n^{21} \rvert \cdot \lvert b_{11}^n \rvert < \frac{1}{4}$ and $\lvert a_n^{12} + a_n^{21} \rvert \cdot \lvert b_{22}^n \rvert > \frac{3}{4}$, and thus $3 \lvert b_{11}^n \rvert < \lvert b_{22}^n \rvert$, because by Remark \ref{Vererbung}, $\lvert a_n^{12} + a_n^{21} \rvert>0$ and $\lvert a_n^{12} + a_n^{21} \rvert>0$. This implies $3 \lvert a_n^{11} \rvert \cdot \lvert  b_{11}^n \rvert < \lvert a_n^{11} \rvert \cdot \lvert b_{22}^n \rvert$ which is a contradiction to $\lim\limits_{n \rightarrow \infty} a_n^{11} b_{11}^n=1=\lim\limits_{n \rightarrow \infty} a_n^{11} b_{22}^n$, so there can be no such sequence $L^n$.
\end{proof}

\begin{proof}[Proof of Corollary \ref{corGegenbeispielallg}]
Define $L$ as in the proof of Theorem \ref{Gegenbeispielallg}. If there were a sequence $L^n$ $\Gamma(L^2(\Omega,\R^N))$-converging to $L$ and satisfying the conditions of Lemma \ref{indepa}, with Lemma \ref{indepa} we could find another sequence $L^n$ defined by $L^n(u):=\int\limits_{\Omega} a_n^{\alpha \beta} b^n_{i j} u^i_{\alpha}(x) u^j_{\beta}(x) \, dx$ so that $L^n(u)$ $\Gamma(L^2(\Omega,\R^N))$-converges to $L(u)$ for every $u \in W^{1,2}(\Omega, \R^N)$. The rest of the proof is analogous to the proof of Theorem \ref{Gegenbeispielallg}.
\end{proof}

\begin{bemerkung}
 The class in which not every functional can be approximated can even be chosen smaller than $\mathcal{E}(\Omega)$ since the counterexample holds for a functional $L$ with integrand $\varphi$ completely independent of $u(x)$.
\end{bemerkung}

In Theorem \ref{cartanwachs}, we have seen that no functional $L \in \mathcal{C}(\Omega)$ can be approximated by a sequence of elements of $\mathcal{R}_I(\Omega)$ which satisfies the condition \eqref{growth}. Now we will see that under the assumptions of Chapter \ref{PropApprox} an approximation is not possible even without demanding isotropy and without demanding that one of the Riemannian manifolds is a Euclidean domain.

\begin{satz}\label{GegenbeispielCart}
 No $L \in \mathcal{C}(\Omega)$ with a parametric integrand $\Phi(Du_1(x) \wedge Du_2(x))$ independent of $u(x)$ can be approximated by energy functionals of the form $L^n(u)=\int\limits_{\Omega} a_n^{\alpha \beta}(x) b^n_{i j} (u(x)) u^i_{\alpha}(x) u^j_{\beta}(x) \, dx$ which satisfy the conditions of Lemma \ref{indepb}.
\end{satz}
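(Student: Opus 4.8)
My plan is to argue essentially as in the proof of Theorem~\ref{cartanwachs}, exploiting the fact that the hypotheses of Lemma~\ref{indepb} already contain the uniform coercivity \eqref{growth}, whereas the Cartan functional $L(u)=\int_{\Omega}\Phi(Du_1(x)\wedge Du_2(x))\,dx$ degenerates on maps whose two column–derivatives are parallel.

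Concretely, I fix such an $L\in\mathcal{C}(\Omega)$ (so $m=2$, $N=3$, $\Omega\subset\R^2$ a nonempty bounded open set) and assume towards a contradiction that a sequence $L^n(u)=\int_{\Omega}a_n^{\alpha\beta}(x)b_{ij}^n(u(x))u^i_{\alpha}(x)u^j_{\beta}(x)\,dx$ satisfying the conditions of Lemma~\ref{indepb} $\Gamma(L^2(\Omega,\R^3))$-converges to $L$. First I record that $L$ satisfies the upper growth bound needed there, since $\Phi(A_1\wedge A_2)\le m_2\lvert A_1\wedge A_2\rvert\le m_2\lvert A_1\rvert\,\lvert A_2\rvert\le\tfrac{m_2}{2}\lvert A\rvert^2$, and that $\Phi(0)=0$, which follows at once from the bound $m_1\lvert z\rvert\le\Phi(s,z)\le m_2\lvert z\rvert$ evaluated at $z=0$. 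Next I choose the affine test map $u(x):=(x_1+x_2)\,e_1$. Then $Du_1(x)=Du_2(x)=e_1$, hence $Du_1(x)\wedge Du_2(x)=0$ and $L(u)=\int_{\Omega}\Phi(0)\,dx=0$, while $\lvert Du(x)\rvert^2=2$ everywhere.

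Now let $u^n$ be a recovery sequence for $u$, so $u^n\to u$ in $L^2(\Omega,\R^3)$ and $\limsup_{n\to\infty}L^n(u^n)\le L(u)=0$ (in particular $u^n\in W^{1,2}$ for $n$ large). From \eqref{growth} we have $L^n(u^n)\ge c_1\int_{\Omega}\lvert Du^n(x)\rvert^2\,dx$ for every $n$, so $\lVert Du^n\rVert_{L^2(\Omega)}\to0$; thus $u^n$ is bounded in $W^{1,2}(\Omega,\R^3)$, and as in the proof of Lemma~\ref{prop154} the $L^2$-limit $u$ is the weak $W^{1,2}$-limit of $u^n$, whence $Du^n\rightharpoonup Du$ weakly in $L^2(\Omega,\R^{3\times2})$. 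By weak lower semicontinuity of the norm, $2\,\mathcal{L}(\Omega)=\int_{\Omega}\lvert Du(x)\rvert^2\,dx\le\liminf_{n\to\infty}\int_{\Omega}\lvert Du^n(x)\rvert^2\,dx=0$, which is impossible since $\mathcal{L}(\Omega)>0$. Hence no such sequence exists.

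Strictly speaking this argument uses only \eqref{growth}, so the full strength of the hypotheses of Lemma~\ref{indepb} is not needed; alternatively one can run the Theorem~\ref{Gegenbeispielallg} template, using Lemma~\ref{indepb} and Lemma~\ref{indepa} (together with Remark~\ref{Vererbung2}, which propagates \eqref{growth} through both reductions and makes \eqref{fusco} trivial once the coefficients cease to depend on $u(x)$) to replace $L^n$ by a constant-coefficient sequence $\widetilde K^n(u)=\int_{\Omega}a_n^{\alpha\beta}(x_0)b_{ij}^n(0)u^i_{\alpha}u^j_{\beta}\,dx$, then Lemma~\ref{constrecseq} to take the constant sequence as a recovery sequence, and reaching the same contradiction on the test map $u$. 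In either route the only point requiring care is bookkeeping: making sure the coercivity constant $c_1$ survives — immediate from \eqref{growth} in the direct route, and guaranteed by Remark~\ref{Vererbung2} in the reduction route — since that is precisely the feature the Cartan functional cannot match.
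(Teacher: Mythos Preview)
Your proof is correct, and it is genuinely different from the paper's. You argue exactly as in Theorem~\ref{cartanwachs}: since the hypotheses of Lemma~\ref{indepb} already include the uniform coercivity \eqref{growth}, any recovery sequence for a map $u$ with $Du_1\wedge Du_2\equiv 0$ must satisfy $\lVert Du^n\rVert_{L^2}\to 0$, which is incompatible with $Du^n\rightharpoonup Du\neq 0$. This uses nothing but \eqref{growth} and the limsup-inequality, and your bookkeeping (upper growth bound for $\Phi$, $\Phi(0)=0$, eventual $W^{1,2}$-membership of the recovery sequence) is clean.

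The paper instead runs the full reduction pipeline: it applies Lemma~\ref{indepa} and Lemma~\ref{indepb} to replace $L^n$ by a constant-coefficient sequence, invokes Lemma~\ref{constrecseq} so that constant sequences are recovery sequences, and then evaluates the $\Gamma$-limit on a battery of affine maps to force the algebraic identity $\Phi(e_1)=-\Phi(-e_1)$, contradicting positivity. Your route is shorter and uses strictly fewer hypotheses for this particular theorem; the paper's route, while heavier here, is the template that is genuinely required for Theorems~\ref{Gegenbeispielallg} and~\ref{GegenbeispielDom}, where the limiting integrand does not degenerate and the coercivity shortcut is unavailable. So your observation that ``only \eqref{growth} is needed'' is exactly the point: Theorem~\ref{GegenbeispielCart} is really a corollary of the mechanism behind Theorem~\ref{cartanwachs}, and the paper's more elaborate argument is there mainly for structural uniformity with the neighbouring results.
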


\begin{korollar}\label{corGegenbeispielCart}
 No $L \in \mathcal{C}(\Omega)$ with a parametric integrand $\Phi(Du_1(x) \wedge Du_2(x))$ independent of $u(x)$ can be approximated by energy functionals of the form $L^n(u)=\int\limits_{\Omega} a_n^{\alpha \beta}(x) b^n_{i j} u^i_{\alpha}(x) u^j_{\beta}(x) \, dx$ which satisfy the conditions of Lemma \ref{indepa}.
\end{korollar}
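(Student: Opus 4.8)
The plan is to mirror the proof of Corollary \ref{corGegenbeispielallg}, with Theorem \ref{GegenbeispielCart} playing the role that Theorem \ref{Gegenbeispielallg} played there. Suppose, for contradiction, that some $L\in\mathcal{C}(\Omega)$ whose parametric integrand $\Phi(Du_1(x)\wedge Du_2(x))$ does not depend on $u(x)$ were the $\Gamma(L^2(\Omega,\R^N))$-limit of a sequence $L^n(u)=\int_{\Omega} a_n^{\alpha\beta}(x)\,b^n_{ij}\,u^i_{\alpha}(x)u^j_{\beta}(x)\,dx$ satisfying the conditions of Lemma \ref{indepa}. Writing $\varphi(A):=\Phi(A_1\wedge A_2)$ one has $L(u)=\int_{\Omega}\varphi(Du(x))\,dx$, and the linear upper bound $\Phi(s,z)\le m_2|z|$ together with $|A_1\wedge A_2|\le\tfrac12(|A_1|^2+|A_2|^2)$ gives $\varphi(s,A)\le\tfrac{m_2}{2}|A|^2$; in particular $L$ satisfies the quadratic growth hypothesis needed in Lemmas \ref{indepa} and \ref{indepb}, and is non-negative since $\Phi\ge0$.

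First I would apply Lemma \ref{indepa} to $L^n$. This produces a sequence $K^n(u)=\int_{\Omega} a_n^{\alpha\beta}(x_0)\,b^n_{ij}\,u^i_{\alpha}(x)u^j_{\beta}(x)\,dx$ whose coefficients are now independent of $x$ (and still independent of $u(x)$), and which again $\Gamma(L^2(\Omega,\R^N))$-converges to $L$. Next I would verify that $K^n$ meets all the hypotheses of Lemma \ref{indepb}, so that it is covered by Theorem \ref{GegenbeispielCart}: conditions \eqref{bounded}, \eqref{growth} and \eqref{UC} are inherited by Remark \ref{Vererbung2}; condition \eqref{fusco} holds trivially (with $\omega\equiv 0$) because the $b^n_{ij}$ are constants; and, since $K^n$ has the form $\int_{\Omega} a_n^{\alpha\beta}b^n_{ij}u^i_{\alpha}u^j_{\beta}\,dx$ with constant coefficients, Corollary \ref{exfus} yields condition \eqref{explrecseq} for $K^n$ and, each of its subsequences being again of this form, for every subsequence of $K^n$; finally Remark \ref{Vererbung} gives $K^n_{k,u}=K^n$, so \eqref{explrecseq} holds for every subsequence of $K^n_{k,u}$ as well.

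Once these are in place, $K^n$ is an approximating sequence of exactly the form excluded by Theorem \ref{GegenbeispielCart} while $\Gamma$-converging to $L\in\mathcal{C}(\Omega)$ with $\Phi$ independent of $u(x)$, which is a contradiction; hence no such sequence $L^n$ can exist, and the corollary follows. I do not anticipate a genuine obstacle here, since the whole argument is a bookkeeping reduction; the one point needing attention is confirming that the reduced constant-coefficient sequence $K^n$ really inherits, or trivially satisfies, every clause of Lemma \ref{indepb}, above all condition \eqref{explrecseq} for all subsequences of both $K^n$ and $K^n_{k,u}$, which is precisely what Corollary \ref{exfus} and Remark \ref{Vererbung} are for. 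As an alternative to invoking Theorem \ref{GegenbeispielCart}, one could, after the reduction to constant coefficients, repeat verbatim the explicit computation from its proof, using Lemma \ref{constrecseq} to take the constant sequence as recovery sequence on affine maps $u$.
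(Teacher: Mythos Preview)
Your proposal is correct and follows essentially the same route as the paper: apply Lemma \ref{indepa} to replace $a_n^{\alpha\beta}(x)$ by the constants $a_n^{\alpha\beta}(x_0)$, and then derive a contradiction. The only minor difference is that the paper, after this reduction, simply says ``the rest of the proof is analogous to the proof of Theorem \ref{GegenbeispielCart}'' (i.e.\ jump straight into the explicit computation with Lemma \ref{constrecseq}), whereas your primary route is to verify that the reduced sequence $K^n$ satisfies all hypotheses of Lemma \ref{indepb} and then invoke Theorem \ref{GegenbeispielCart} as a black box; you even list the paper's shortcut as your alternative. Both work, and your extra bookkeeping (checking \eqref{explrecseq} via Corollary \ref{exfus} and Remark \ref{Vererbung}) is correct though not strictly needed once the coefficients are constant.
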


\begin{bemerkung}
 As in Corollary \ref{corGegenbeispielallg}, Corollary \ref{corGegenbeispielCart} restricts the approximating metrics $L^n$ to those metrics whose integrands are independent of $u(x)$. Again by this effort, we do not need the condition \eqref{explrecseq} any more. Since this condition is difficult to prove, it is nice to be able to omit it.
\end{bemerkung}

\begin{proof}[Proof of Theorem \ref{GegenbeispielCart}]
 If there were a sequence $L^n$ $\Gamma(L^2(\Omega,\R^N))$-converging to $L$ and satisfying the conditions of Lemma \ref{indepb}, with Lemma \ref{indepa} and Lemma \ref{indepb} we could find another sequence $L^n$ defined by $L^n(u):=\int\limits_{\Omega} a_n^{\alpha \beta} b^n_{i j} u^i_{\alpha}(x) u^j_{\beta}(x) \, dx$ so that $L^n(u)$ $\Gamma(L^2(\Omega,\R^N))$-converges to $L(u)$ for every $u \in W^{1,2}(\Omega, \R^N)$. By Lemma \ref{constrecseq}, we can choose the constant sequence $u$ as recovery sequence. Thus, we can deduce that $0=\int\limits_{\Omega} \Phi(0) \, dx$ can be expressed as $L (0,x_1, 0)^T= \lim\limits_{n \rightarrow \infty} \int\limits_{\Omega} a_n^{11} b_{22}^n \, dx$, but also as $L (0, 0, x_1)^T= \lim\limits_{n \rightarrow \infty} \int\limits_{\Omega} a_n^{11} b_{33}^n \, dx$, as $L (0, x_2, 0)= \lim\limits_{n \rightarrow \infty} \int\limits_{\Omega} a_n^{22} b_{22}^n \, dx$ and as $L (0, 0, x_2)^T = \lim\limits_{n \rightarrow \infty} \int\limits_{\Omega} a_n^{22} b_{33}^n \, dx$. This implies that
\begin{align*}
 L \begin{pmatrix} 0 \\ x_1 \\ x_1 \end{pmatrix} = \lim\limits_{n \rightarrow \infty} \int\limits_{\Omega} a_n^{11} b_{22}^n+a_n^{11} b_{23}^n+a_n^{11} b_{32}^n+a_n^{11} b_{33}^n \, dx = \lim\limits_{n \rightarrow \infty} \int\limits_{\Omega} a_n^{11} b_{23}^n+a_n^{11} b_{32}^n \, dx
\end{align*}
and
\begin{align*}
 L \begin{pmatrix} 0 \\ x_2 \\ x_2 \end{pmatrix} = \lim\limits_{n \rightarrow \infty} \int\limits_{\Omega} a_n^{22} b_{22}^n+a_n^{22} b_{23}^n+a_n^{22} b_{32}^n+a_n^{22} b_{33}^n \, dx = \lim\limits_{n \rightarrow \infty} \int\limits_{\Omega} a_n^{22} b_{23}^n+a_n^{22} b_{32}^n \, dx,
\end{align*}
which both as well equals $\int\limits_{\Omega} \Phi(0) \, dx=0$. Moreover, we get $\int\limits_{\Omega} \Phi (1, 0, 0)^T =L (0, x_1, x_2)^T$, which equals
\begin{align*}
\lim\limits_{n \rightarrow \infty} \int\limits_{\Omega} a_n^{11} b_{22}^n + a_n^{12} b_{23}^n + a_n^{21} b_{32}^n + a_n^{22} b_{33}^n \, dx = \lim\limits_{n \rightarrow \infty} \int\limits_{\Omega} a_n^{12} b_{23}^n + a_n^{21} b_{32}^n\, dx,  
\end{align*}
and $\int\limits_{\Omega} \Phi (-1, 0, 0)^T=L (0, x_2, x_1)^T$, which equals
\begin{align*}
\lim\limits_{n \rightarrow \infty} \int\limits_{\Omega} a_n^{11} b_{33}^n + a_n^{12} b_{32}^n + a_n^{21} b_{23}^n + a_n^{22} b_{22}^n \, dx = \lim\limits_{n \rightarrow \infty} \int\limits_{\Omega} a_n^{12} b_{32}^n + a_n^{21} b_{23}^n \, dx.
\end{align*}
Furthermore, for $M \in \R$ we achieve the expressions
\begin{align*}
L \begin{pmatrix} 0 \\ (M+2)x_1+x_2 \\ 0 \end{pmatrix} = \lim\limits_{n \rightarrow \infty} \int\limits_{\Omega} (M+2)^2 a_n^{11} b_{22}^n + (M+2) a_n^{12} b_{22}^n + (M+2) a_n^{21} b_{22}^n +a_n^{22} b_{22}^n \, dx
\end{align*}
and
\begin{align*}
L \begin{pmatrix} 0 \\ 0 \\ (M+1)x_1+x_2 \end{pmatrix} = \lim\limits_{n \rightarrow \infty} \int\limits_{\Omega} (M+1)^2 a_n^{11} b_{33}^n + (M+1) a_n^{12} b_{33}^n + (M+1) a_n^{21} b_{33}^n +a_n^{22} b_{33}^n \, dx
\end{align*}
for $\int\limits_{\Omega} \Phi(0) \, dx=0$. Putting all these things together, we achieve
\begin{align*}
 \int\limits_{\Omega} \Phi \begin{pmatrix} 1 \\ 0 \\ 0 \end{pmatrix} =  \, L \begin{pmatrix} 0 \\ (M+2)x_1+x_2 \\ (M+1)x_1+x_2 \end{pmatrix} = \int\limits_{\Omega} (M+2) \Phi \begin{pmatrix} 1 \\ 0 \\ 0 \end{pmatrix} + (M+1) \Phi \begin{pmatrix} -1 \\ 0 \\ 0 \end{pmatrix} \, dx
\end{align*}
which implies $\Phi (1, 0, 0)^T= -\Phi (-1, 0, 0)^T$, but that is impossible since $\Phi (1, 0, 0)^T>0$ and $\Phi (-1, 0, 0)^T>0$.
\end{proof}

\begin{proof}[Proof of Theorem \ref{corGegenbeispielCart}]
 If there were a sequence $L^n$ $\Gamma(L^2(\Omega,\R^N))$-converging to $L$ and satisfying the conditions of Lemma \ref{indepa}, with Lemma \ref{indepa} we could find another sequence $L^n$ defined by $L^n(u):=\int\limits_{\Omega} a_n^{\alpha \beta} b^n_{i j} u^i_{\alpha}(x) u^j_{\beta}(x) \, dx$ so that $L^n(u)$ $\Gamma(L^2(\Omega,\R^N))$-converges to $L(u)$ for every $u \in W^{1,2}(\Omega, \R^N)$. The rest of the proof is analogous to the proof of Theorem \ref{GegenbeispielCart}.
\end{proof}

In Theorem \ref{counteriso} and Theorem \ref{dom}, we have seen that not every perfect dominance functional of a Cartan functional in $\mathcal{C}(\Omega)$ can be approximated by elements of $\mathcal{R}_I(\Omega)$, no matter if the Cartan functional is even or not. Now we will see that under the assumptions of Chapter \ref{PropApprox} an approximation is not possible even without demanding isotropy and without demanding that one of the Riemannian manifolds is a Euclidean domain.

\begin{satz}\label{GegenbeispielDom}
 Not every perfect dominance functional of a Cartan functional in $\mathcal{C}(\Omega)$ can be approximated by metrics of the form $L^n(u)=\int\limits_{\Omega} a_n^{\alpha \beta}(x) b^n_{i j} (u(x)) u^i_{\alpha}(x) u^j_{\beta}(x) \, dx$ which satisfy the conditions of Lemma \ref{indepb}.
\end{satz}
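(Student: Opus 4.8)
The strategy mirrors the proofs of Theorem \ref{Gegenbeispielallg} and Theorem \ref{GegenbeispielCart}: reduce to an approximating sequence whose coefficients are constant, then derive a numerical contradiction from the $\Gamma$-limit identities applied to affine test functions. Concretely, start from the observation (already recorded after Theorem \ref{counteriso} and in Lemma \ref{domodd}) that there exists a perfect dominance function $g$, and indeed one may take $\tilde g$ with the asymmetry
\[
\int_{(0,1)^2} \tilde g(A_1 x_1 + A_2 x_2, A)\,dx < \int_{(0,1)^2} \tilde g(A_1 x_1 + A_2 x_2, (A_2\mid A_1))\,dx
\]
for some $A \in \R^{3\times 2}$. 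The point is that $\tilde g$ depends nontrivially on $u(x)$, so to run the argument we first need to know the dependence on $u(x)$ can be stripped away. This is exactly what Lemma \ref{indepa} and Lemma \ref{indepb} provide: if $G := \int_\Omega \tilde g(u(x), Du(x))\,dx$ could be approximated by a sequence $L^n(u)=\int_\Omega a_n^{\alpha\beta}(x) b^n_{ij}(u(x)) u^i_\alpha u^j_\beta\,dx$ satisfying the conditions of Lemma \ref{indepb}, then (applying Lemma \ref{indepa} to remove the $x$-dependence and Lemma \ref{indepb} to replace $b^n_{ij}(u(x))$ by $b^n_{ij}(0)$) there would be a sequence of the simple form $L^n(u)=\int_\Omega a_n^{\alpha\beta} b^n_{ij} u^i_\alpha u^j_\beta\,dx$ with constant coefficients $\Gamma$-converging to $G$. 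Here one must check that the limit functional's growth bound $G(u)\le \mu_2 \lVert Du\rVert_{L^2}^2$ (from the perfectness of $\tilde g$) makes Lemma \ref{indepb} applicable, and that $\tilde g$ having the form $g + aH$ with $H$ quadratic means the limit integrand $\varphi(A)=\tilde g(A)$ genuinely depends only on $Du$.

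Once we are reduced to constant coefficients, invoke Lemma \ref{constrecseq}: the constant sequence $u^n=u$ is a recovery sequence, so $G(u)=\lim_{n\to\infty}\int_\Omega a_n^{\alpha\beta} b^n_{ij} u^i_\alpha u^j_\beta\,dx$ for every $u \in W^{1,2}(\Omega,\R^3)$. Now take the two affine functions $u_2(x) := A_1 x_1 + A_2 x_2$ and $u_1(x) := A_2 x_1 + A_1 x_2$ (with $A$ as above). Since $Du_1$ is obtained from $Du_2$ by swapping columns, i.e. $Du_1(x) = Du_2(x)\left(\begin{smallmatrix} 0 & 1 \\ 1 & 0\end{smallmatrix}\right)$, applying the identity above to both and using the substitution $x \mapsto (x_2, x_1)$ exactly as in the proof of Theorem \ref{counteriso}, one computes
\[
\int_{(0,1)^2} \tilde g(u_2, Du_2)\,dx = \lim_{n\to\infty} \int_\Omega a_n^{\alpha\beta} b^n_{ij} (Du_2)^i_\alpha (Du_2)^j_\beta\,dx = \int_{(0,1)^2} \tilde g(u_1, Du_1)\,dx,
\]
because the constant-coefficient quadratic form $a_n^{\alpha\beta} b^n_{ij} z^i_\alpha z^j_\beta$ is invariant under permuting the columns of $z$ together with permuting the $x$-variables (this is the content of the final theorem of Section \ref{Counterexamplessimp}, specialized to constant coefficients). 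But the two sides are precisely the left-hand and right-hand integrals in the strict inequality furnished by Lemma \ref{domodd}, which forces them to be unequal — a contradiction.

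The main obstacle is the bookkeeping in the reduction step: verifying that the hypotheses of Lemma \ref{indepa} and Lemma \ref{indepb} (conditions \eqref{bounded}, \eqref{growth}, \eqref{UC}, \eqref{fusco}, and \eqref{explrecseq} for all subsequences of $L^n$ and of $L^n_{k,u}$) are exactly the "conditions of Lemma \ref{indepb}" in the theorem statement, so that the chain of reductions is legitimate, and that after the reductions the new constant-coefficient sequence still $\Gamma$-converges to the same $G$ with the same growth bound. The second, more conceptual point to get right is that one needs $\tilde g$, not merely $g$: an arbitrary perfect dominance function of an arbitrary Cartan functional need not exhibit the column-swap asymmetry (if the Cartan functional is even it might be symmetric), but Lemma \ref{domodd} guarantees that within the class of perfect dominance functionals of any such Cartan functional there is always one with the asymmetry, and that is all "not every" requires. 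The computational heart — the substitution argument producing the equality of the two integrals from the constant-coefficient structure — is routine given the earlier theorems, so no further detail is needed there.
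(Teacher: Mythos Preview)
Your reduction to constant coefficients via Lemma~\ref{indepa} and Lemma~\ref{indepb} is the right first move, and the paper does exactly that. But the contradiction you try to extract afterwards does not go through: the quadratic form $a_n^{\alpha\beta} b^n_{ij} z^i_\alpha z^j_\beta$ with \emph{arbitrary} constant coefficients is \emph{not} invariant under swapping the columns of $z$. Swapping columns sends $a_n^{\alpha\beta} b^n_{ij} z^i_\alpha z^j_\beta$ to $a_n^{\alpha\beta} b^n_{ij} z^i_{P(\alpha)} z^j_{P(\beta)}$ (with $P$ the transposition), which after reindexing is $a_n^{P(\alpha)P(\beta)} b^n_{ij} z^i_\alpha z^j_\beta$; this equals the original form only if $a_n^{11}=a_n^{22}$. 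The substitution $x\mapsto(x_2,x_1)$ you invoke does nothing here, because after the reduction the coefficients no longer depend on $x$. The ``final theorem of Section~\ref{Counterexamplessimp}'' that you cite concerns only elements of $\mathcal{R}(\Omega)$, i.e.\ the case $a^{\alpha\beta}=\delta^{\alpha\beta}$; it is precisely the presence of a nontrivial $a_n^{\alpha\beta}$ that separates the present theorem from Theorems~\ref{bild} and~\ref{dom}, so you cannot import that argument.

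There is a second gap in the reduction step itself: Lemma~\ref{indepb} requires the limit integrand to depend only on $Du$, but the $\tilde g$ supplied by Lemma~\ref{domodd} has the form $g+aH$ where $g$ is the \emph{given} perfect dominance function, which may well depend on $s$; your claim that $\varphi(A)=\tilde g(A)$ is therefore unjustified. The paper sidesteps both issues by choosing from the outset the parametric integrand $\Phi(s,z)=3\lvert z\rvert$, building an explicit perfect dominance function $g(A)$ independent of $s$ via \cite{Hildebrandt}, and then exploiting not a column-swap symmetry but a \emph{bilinearity} (inclusion--exclusion in the target indices) that constant coefficients genuinely force:
\[
g(e_1\mid e_2+e_3)=g(e_1\mid e_2)+g(e_1\mid e_3)-g(e_1\mid 0)+g(0\mid e_2+e_3)-g(0\mid e_2)-g(0\mid e_3).
\]
A direct computation shows the explicit $g$ violates this identity, and that is the contradiction. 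This bilinearity identity is the correct replacement for your column-swap argument.
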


\begin{korollar}\label{corGegenbeispielDom}
 Not every perfect dominance functional of a Cartan functional in $\mathcal{C}(\Omega)$ can be approximated by metrics of the form $L^n(u)=\int\limits_{\Omega} a_n^{\alpha \beta}(x) b^n_{i j} u^i_{\alpha}(x) u^j_{\beta}(x) \, dx$ which satisfy the conditions of Lemma \ref{indepa}.
\end{korollar}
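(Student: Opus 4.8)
The plan is to follow the pattern of the proofs of Corollary \ref{corGegenbeispielallg} and Corollary \ref{corGegenbeispielCart}: reduce a hypothetical approximating sequence to one with coefficients independent of $x$ and of $u(x)$ by means of Lemma \ref{indepa}, and then exploit the rigidity of the resulting limit. The concrete counterexample I would use is a \emph{non-quadratic} perfect dominance function. Starting from the even Cartan functional with area integrand $\Phi(z)=|z|\in\mathcal{C}(\Omega)$ and its Dirichlet dominance function $g_0(A)=\tfrac12|A|^2$, the construction carried out in the proof of Lemma \ref{domodd} applies in its equality branch (since $g_0(A)=\tfrac12(|A_1|^2+|A_2|^2)$ is symmetric under interchanging $A_1$ and $A_2$, the strict inequality never holds for $g_0$) and produces $g:=g_0+aH$ for a small $a>0$, where $H\ge0$ is positively $2$-homogeneous, of class $C^2$ on $\R^{3\times2}\setminus\{0\}$, and supported in $C\times C$ for a cone $C\subsetneq\R^3$ no two of whose vectors are orthogonal; in particular $H$ vanishes whenever $A_1\cdot A_2=0$, hence on every matrix with orthonormal columns. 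Then $g$ is a perfect dominance function of $\Phi$: one has $f(A)=|A_1\wedge A_2|\le\tfrac12|A|^2\le g(A)$ with equality exactly on the orthonormal locus, $\mu_1|A|^2\le g\le\mu_2|A|^2$, and $\xi^{T}g_{AA}(A)\xi\ge(\lambda_{g_0}+a\lambda_H)|\xi|^2>0$; it depends only on $A$; and it is not a quadratic form, since a quadratic form agreeing with $g_0+aH$ would agree with the polynomial $g_0$ on the nonempty open set where $H$ vanishes, hence everywhere, forcing $aH\equiv0$. Put $G(u):=\int\limits_{\Omega}g(Du(x))\,dx$; this is a perfect dominance functional of a Cartan functional in $\mathcal{C}(\Omega)$.

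Next I would carry out the reduction. Suppose $G$ were the $\Gamma(L^2(\Omega,\R^N))$-limit of a sequence $L^n(u)=\int\limits_{\Omega}a_n^{\alpha\beta}(x)\,b^n_{ij}\,u^i_{\alpha}(x)\,u^j_{\beta}(x)\,dx$ satisfying the conditions of Lemma \ref{indepa}; note that $G(u)=\int\limits_{\Omega}\varphi(u(x),Du(x))\,dx$ with $\varphi(s,A):=g(A)\le\mu_2|A|^2$, as that lemma requires. Then Lemma \ref{indepa}, together with Remark \ref{Vererbung2} for the inheritance of \eqref{bounded}, \eqref{growth} and \eqref{UC}, yields a sequence $K^n(u)=\int\limits_{\Omega}a_n^{\alpha\beta}(x_0)\,b^n_{ij}\,u^i_{\alpha}(x)\,u^j_{\beta}(x)\,dx$, with all coefficients constant, that still $\Gamma(L^2(\Omega,\R^N))$-converges to $G$.

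Then I would derive the contradiction. Since $G(u)\le\mu_2\lVert Du\rVert_{L^2(\Omega)}^2$, Lemma \ref{constrecseq} applies to $K^n$, so the constant sequence is a recovery sequence and $G(u)=\lim_{n\to\infty}K^n(u)$ for every $u\in W^{1,2}(\Omega,\R^N)$. Evaluating at the affine maps $u(x)=Ax$, $A\in\R^{N\times m}$, gives
\begin{equation}
 g(A)=\lim_{n\to\infty}a_n^{\alpha\beta}(x_0)\,b^n_{ij}\,A^i_{\alpha}A^j_{\beta}\qquad\text{for every }A\in\R^{N\times m}.\nonumber
\end{equation}
By \eqref{bounded} the finitely many numerical sequences $a_n^{\alpha\beta}(x_0)$ and $b^n_{ij}$ are bounded, hence converge along a common subsequence, say $a_n^{\alpha\beta}(x_0)\to\bar a^{\alpha\beta}$ and $b^n_{ij}\to\bar b_{ij}$; therefore $g(A)=\bar a^{\alpha\beta}\bar b_{ij}A^i_{\alpha}A^j_{\beta}$ for all $A$, i.e. $g$ is a quadratic form, contradicting its choice. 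Hence no such sequence $L^n$ exists, which is the assertion of Corollary \ref{corGegenbeispielDom}. (Running the reduction with both Lemma \ref{indepa} and Lemma \ref{indepb} gives in the same way Theorem \ref{GegenbeispielDom}, for sequences whose coefficients $b^n_{ij}$ still depend on $u(x)$.)

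The main obstacle is not in the soft final argument but in the two inputs it relies on. The hard work — converting a general, $x$- and $u(x)$-dependent approximating sequence into a constant-coefficient one, and showing that the constant map is then a recovery sequence — is exactly the content of Lemma \ref{indepa}, Lemma \ref{indepb} and Lemma \ref{constrecseq} from Chapter \ref{PropApprox}; with those in hand the contradiction above is immediate. The only genuinely delicate point is the admissibility of the counterexample, i.e. checking that $g_0+aH$ remains a \emph{perfect} dominance function: that $H\ge0$ and vanishes on the orthonormal locus $\{|A_1|=|A_2|,\,A_1\cdot A_2=0\}$ where $f=g_0$ (so the equality clause $f=g$ is preserved), that the two-sided quadratic bounds survive, and that $a$ can be taken small enough to keep the strong ellipticity constant $\lambda_{g_0}+a\lambda_H$ positive — and this is exactly what is verified in the proof of Lemma \ref{domodd}, so it can be quoted rather than redone.
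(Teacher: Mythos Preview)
Your proof is correct, and the reduction step via Lemma~\ref{indepa} and Lemma~\ref{constrecseq} is exactly what the paper does. Where you depart from the paper is in the choice of counterexample and in the mechanism of the final contradiction. The paper takes the explicit Hildebrandt dominance function $g(A)=\lvert A\rvert^2+\tfrac12\lvert A\rvert^2\bigl(\tfrac12+\tfrac12\eta^{1/4}(\tau(A))\bigr)$ for the integrand $\Phi(z)=3\lvert z\rvert$, evaluates the putative limit at six concrete affine maps, and obtains the numerical absurdity $\eta^{1/4}(2\sqrt{2}/3)=\tfrac43>1$. You instead pick $g=g_0+aH$ from the equality branch of Lemma~\ref{domodd} and argue abstractly: once the coefficients are constant and the constant sequence is a recovery sequence, a subsequence of the bounded numbers $a_n^{\alpha\beta}(x_0),\,b^n_{ij}$ converges, forcing $g$ to be a quadratic form, which it is not. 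Your route is cleaner and in fact shows more --- any perfect dominance function depending only on $A$ that is not a quadratic form already furnishes a counterexample (and the paper's own $g$ would qualify, so its explicit computation could be replaced by your one-line argument). The paper's route, on the other hand, avoids having to revisit the verification that $g_0+aH$ is a perfect dominance function, since it quotes a dominance function straight from \cite{Hildebrandt}.
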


\begin{bemerkung}
 As in Corollary \ref{corGegenbeispielallg} and \ref{corGegenbeispielCart}, Corollary \ref{corGegenbeispielDom} restricts the approximating metrics $L^n$ to those metrics whose integrands are independent of $u(x)$. Again by this effort, we do not need the condition \eqref{explrecseq} any more. Since this condition is difficult to prove, it is nice to be able to omit it.
\end{bemerkung}

\begin{proof}[Proof of Theorem \ref{GegenbeispielDom}]
 Define the Cartan functional $L$ in terms of its parametric integrand $\Phi(s,z):= 3 \lvert z \rvert = k \cdot \lvert z \rvert +\Phi^*(s,z)$ with $\Phi^*(s,z):=\lvert z \rvert$, $k=2$ and let $L^*$ be the Cartan functional with the parametric integrand $\Phi^*$. Then obviously, $\Phi^*$ satisfies $m_1^* \lvert z \rvert \leq \Phi^*(s,z) \leq m_2^* \lvert z \rvert$ for $m_1^*=m_2^*=1$. Furthermore, we have $\lvert z \rvert \xi^T \Phi^*_{zz}(s,z) \xi = \lvert \xi \rvert^2 - \frac{1}{\lvert z \rvert^2} (\xi \cdot z)^2$ and
\begin{align*}
 \lvert P_{z^{\bot}} \xi \rvert^2 &= \left\lvert \xi - (\xi \cdot z) z \tfrac{1}{\lvert z \rvert^2}  \right\rvert^2 = \lvert \xi \rvert^2 + (\xi \cdot z)^2 \tfrac{1}{\lvert z \rvert^2} - 2 (\xi \cdot z)^2 \tfrac{1}{\lvert z \rvert^2} = \lvert \xi \rvert^2 - (\xi \cdot z)^2 \tfrac{1}{\lvert z \rvert^2} 
\end{align*}
so that we get $\lvert z \rvert \xi^T \Phi^*_{zz}(s,z) \xi \geq \lambda_{L^*}(R_0) \lvert P_{z^{\bot}} \xi \rvert^2$ for every $\xi, s, z \in \R^3$, $\lvert s \rvert \leq R_0$ with $\lambda_{L^*}(R_0)=1$ for all $R_0 > 0$. Thus, $\lambda^*:=\inf\limits_{R_0 \in (0, \infty]} \lambda_{L^*}(R_0)=1$. Moreover, we have $2=k>k_0:=2(m_2^*-\min\{\lambda^*, \tfrac{m_1^*}{2} \} )=1$. Thus, $\Phi$ possesses a perfect dominance function $g(s,A):=\lvert A \rvert^2+g^{* \frac{1}{4}}(s,A)$ with $g^{* \frac{1}{4}}(s,A):=\tfrac{1}{2} \lvert A \rvert^2 ( \tfrac{1}{2} + \tfrac{1}{2} \eta^{\frac{1}{4}}(\tau(A)))$ with $\tau(A)=\frac{2\lvert A_1 \wedge A_2 \rvert}{\lvert A \rvert^2}$ for $A \neq 0$, $\tau(0)=1$ (see \cite[Proofs of Theorems 1.3, 2.8, 2.14]{Hildebrandt}). Here, $\eta^{\frac{1}{4}}: [0,1] \rightarrow \R$ is a cut-off function with $\eta^{\frac{1}{4}}(0)=0$, $\eta^{\frac{1}{4}}(1)=1$ and $0< \eta^{\frac{1}{4}}(r) < 1$ for $u_0^{\frac{1}{4}} < r < 1$ with $u_0^{\frac{1}{4}}=\frac{1}{6}$. We will see that every such function $\eta^{\frac{1}{4}}$ will provide a counterexample. Simple computation yields
\begin{align}
 g (e_1 | e_2+e_3)&=3 +\tfrac{3}{2} (\tfrac{1}{2} + \tfrac{1}{2} \eta^{\frac{1}{4}}(\tfrac{2 \sqrt{2}}{3}) ) =\tfrac{15}{4} + \tfrac{3}{4} \eta^{\frac{1}{4}}(\tfrac{2 \sqrt{2}}{3}) \label{Dom1}
\end{align}
with $0 < \eta^{\frac{1}{4}}(\tfrac{2 \sqrt{2}}{3}) <1$ since $\frac{1}{6} < \frac{2 \sqrt{2}}{3} < 1$ and
\begin{align}
 & g ( e_1 | e_2) +  g(e_1 | e_3) - g (e_1 | 0) + g (0 | e_2+e_3) - g(0 | e_2) - g(0 | e_3) \nonumber\\
=& \, 2+1 ( \tfrac{1}{2} + \tfrac{1}{2} \eta^{\frac{1}{4}}(1)) +2+ 1 ( \tfrac{1}{2} + \tfrac{1}{2} \eta^{\frac{1}{4}}(1)) - (1+\tfrac{1}{2} ( \tfrac{1}{2} + \tfrac{1}{2} \eta^{\frac{1}{4}}(0))) +2 + 1 ( \tfrac{1}{2} + \tfrac{1}{2} \eta^{\frac{1}{4}}(0)) \nonumber\\
&- (1+\tfrac{1}{2} ( \tfrac{1}{2} + \tfrac{1}{2} \eta^{\frac{1}{4}}(0))) - (1+\tfrac{1}{2} ( \tfrac{1}{2} + \tfrac{1}{2} \eta^{\frac{1}{4}}(0)))  = \, \tfrac{19}{4}. \label{Dom2}
\end{align}
Now if there were a sequence $L^n$ $\Gamma(L^2(\Omega,\R^N))$-converging to $G(u)=\int\limits_{\Omega} g(u(x), Du(x)) \, dx$ and satisfying the conditions of Lemma \ref{indepb}, with Lemma \ref{indepa} and Lemma \ref{indepb} we could find another sequence $L^n$ defined by $L^n(u):=\int\limits_{\Omega} a_n^{\alpha \beta} b^n_{i j} u^i_{\alpha}(x) u^j_{\beta}(x) \, dx$ so that $L^n(u)$ $\Gamma(L^2(\Omega,\R^N))$-converges to $G(u)$ for every $u \in W^{1,2}(\Omega, \R^N)$. By Lemma \ref{constrecseq}, we can choose the constant sequence $u$ as recovery sequence. This implies that $G (x_1, x_2, x_2)^T = \int\limits_{\Omega} g (e_1 | e_2+e_3)^T \, dx$ equals
\begin{align*}
\lim\limits_{n \rightarrow \infty} \int\limits_{\Omega} a^{11}_n b_{11}^n+ a^{12}_n b_{12}^n + a^{12}_n b_{13}^n + a^{21}_n b_{21}^n + a^{22}_n b_{22}^n + a^{22}_n b_{23}^n + a^{21}_n b_{31}^n + a^{22}_n b_{32}^n + a^{22}_n b_{33}^n \, dx.
\end{align*}
On the other hand, we would have
\allowdisplaybreaks\begin{align*}
 &G \begin{pmatrix} x_1 \\ x_2 \\ 0 \end{pmatrix} + G \begin{pmatrix} x_1 \\ 0 \\ x_2 \end{pmatrix} - G \begin{pmatrix} x_1 \\ 0 \\ 0 \end{pmatrix} + G \begin{pmatrix} 0 \\ x_2 \\ x_2 \end{pmatrix} - G \begin{pmatrix} 0 \\ x_2 \\ 0 \end{pmatrix} - G \begin{pmatrix} 0 \\ 0 \\ x_2 \end{pmatrix} \\
=& \int\limits_{\Omega} g (e_1 | e_2) +g (e_1 | e_3) - g (e_1| 0) + g (0 | e_2 + e_3) - g (0 | e_2) - g (0 | e_3) \, dx \\
=&  \lim\limits_{n \rightarrow \infty} \int\limits_{\Omega} a^{11}_n b_{11}^n+ a^{12}_n b_{12}^n + a^{12}_n b_{13}^n + a^{21}_n b_{21}^n + a^{22}_n b_{22}^n + a^{22}_n b_{23}^n + a^{21}_n b_{31}^n + a^{22}_n b_{32}^n + a^{22}_n b_{33}^n \, dx,
\end{align*}
which equals $\int\limits_{\Omega} g (e_1 | e_2+e_3) \, dx$ as we have seen above. This implies
\begin{align*}
  g (e_1 | e_2+e_3) = g (e_1 | e_2) +g (e_1 | e_3) - g (e_1| 0) + g (0 | e_2 + e_3) - g (0 | e_2) - g (0 | e_3) 
\end{align*}
which by \eqref{Dom1} and \eqref{Dom2} is equivalent to $\frac{15}{4} + \frac{3}{4} \eta^{\frac{1}{4}}(\tfrac{2 \sqrt{2}}{3})=\tfrac{19}{4}$. Hence, $\eta^{\frac{1}{4}}(\tfrac{2 \sqrt{2}}{3}) = \tfrac{4}{3}>1$ which is a contradiction to $0 < \eta^{\frac{1}{4}}(\tfrac{2 \sqrt{2}}{3}) <1$.
\end{proof}

\begin{proof}[Proof of Corollary \ref{corGegenbeispielDom}]
 Define $g$ as in the proof of Theorem \ref{GegenbeispielDom}. If there were a sequence $L^n$ $\Gamma(L^2(\Omega,\R^N))$-converging to $G(u)=\int\limits_{\Omega} g(u(x), Du(x)) \, dx$ and satisfying the conditions of Lemma \ref{indepa}, with Lemma \ref{indepa} we could find another sequence $L^n$ defined by $L^n(u):=\int\limits_{\Omega} a_n^{\alpha \beta} b^n_{i j} u^i_{\alpha}(x) u^j_{\beta}(x) \, dx$ so that $L^n(u)$ $\Gamma(L^2(\Omega,\R^N))$-converges to $G(u)$ for every $u \in W^{1,2}(\Omega, \R^N)$. The rest of the proof is analogous to the proof of Theorem \ref{GegenbeispielDom}.
\end{proof}

To see that there are perfect dominance functionals of Cartan functionals which can be approximated by metrics satisfying the conditions \eqref{bounded}, \eqref{growth}, \eqref{UC}, \eqref{fusco} and \eqref{constrecseq}, note that the energy functional of $g(A)=\frac{1}{2} \lvert A \rvert^2$ clearly can be approximated by metrics with $a_n^{\alpha \beta}=\frac{1}{2} \delta^{\alpha}_{\beta}$, $b_{ij}^n=\delta_i^j$, where $\delta_i^j$ is the Kronecker delta with value $1$ if $i=j$ and $0$ otherwise. By Corollary \ref{exfus} and Remark \ref{Vererbung}, the approximating metrics satisfy \eqref{explrecseq}, the other conditions of Lemma \ref{indepb} are clearly satisfied. Note that $G(u)=\int\limits_{\Omega} g(Du(x)) \, dx$ is lower semicontinuous with respect to the weak $W^{1,2}(\Omega, \R^3)$-convergence and a perfect dominance functional of the even Cartan functional $L$ with parametric integrand $\Phi(z)=\lvert z \rvert$ (see \cite[p 301]{Hildebrandt}), and that the metrics of the energy functionals $L^n$ in this example are isotropic. There are non-even Cartan functionals whose perfect dominance functionals define energy functionals which can be approximated by metrics satisfying the above conditions as well. Of course, these metrics cannot be isotropic in this case. An example is the Cartan functional with the parametric integrand $\Phi(z)=\lvert z \rvert + z_3$, which possesses the perfect dominance functional $G(u)=\int\limits_{\Omega} g(Du(x)) \, dx$ with $g(A)=\frac{1}{2} \lvert A \rvert^2 + \begin{pmatrix} 0 \\ 0 \\ \frac{1}{2} \end{pmatrix} (A_1 \wedge A_2)$ (see \cite[p 302]{Hildebrandt}). $G$ is lower semicontinuous with respect to the weak $W^{1,2}(\Omega, \R^3)$-convergence, so the respective energy functional can be approximated by metrics with $a_n^{\alpha \alpha}=b_{ii}^n=\frac{1}{\sqrt{2}}$, $a^{12}_n=-a^{21}_n=b_{12}^n=-b_{21}^n=\frac{1}{2}$ since
\begin{align*}
 g(A)=& \, \frac{1}{2} \lvert A \rvert^2 + \begin{pmatrix} 0 \\ 0 \\ \frac{1}{2} \end{pmatrix} (A_1 \wedge A_2) \\
 =\, &\frac{1}{2} (A_1^1)^2 + \frac{1}{2} (A_1^2)^2 + \frac{1}{2} (A_1^3)^2  + \frac{1}{2} (A_2^1)^2 + \frac{1}{2} (A_2^2)^2 + \frac{1}{2} (A_2^3)^2 + \frac{1}{2} (A_3^1)^2 + \frac{1}{2} (A_3^2)^2 + \frac{1}{2} (A_3^3)^2 \\
 &+ \frac{1}{2} A_1^1 A_2^2 - \frac{1}{2} A_1^2 A_2^1
\end{align*}
and
\begin{align*}
 L^n(A)= & \int\limits_{\Omega} \frac{1}{2} (A_1^1)^2 + \frac{1}{2} \frac{1}{\sqrt{2}} A_1^1 A_1^2 -\frac{1}{2} \frac{1}{\sqrt{2}} A^2_1 A^1_1 + \frac{1}{2} (A^2_1)^2 + \frac{1}{2} (A^3_1)^2  \\
 & + \frac{1}{\sqrt{2}} \frac{1}{2} A^1_1 A^1_2 + \frac{1}{4} A^1_1 A^2_2 - \frac{1}{4} A^2_1 A^1_2 + \frac{1}{2} \frac{1}{\sqrt{2}} A_1^2 A_2^2 + \frac{1}{2} \frac{1}{\sqrt{2}} A^3_1 A^3_2 \\
 & - \frac{1}{\sqrt{2}} \frac{1}{2} A^1_2 A^1_1 - \frac{1}{4} A^1_2 A^2_1 + \frac{1}{4} A^2_2 A^1_1 - \frac{1}{2} \frac{1}{\sqrt{2}} A_2^2 A_1^2 - \frac{1}{2} \frac{1}{\sqrt{2}} A^3_2 A^3_1 \\
 & + \frac{1}{2} (A_2^1)^2 + \frac{1}{2} \frac{1}{\sqrt{2}} A_2^1 A_2^2 -\frac{1}{2} \frac{1}{\sqrt{2}} A^2_2 A^1_2 + \frac{1}{2} (A^2_2)^2 + \frac{1}{2} (A^3_2)^2  \, dx \\
 =&  \int\limits_{\Omega} \frac{1}{2} (A_1^1)^2 + \frac{1}{2} (A^2_1)^2 + \frac{1}{2} (A^3_1)^2  + \frac{1}{2} A^1_1 A^2_2 - \frac{1}{2} A^2_1 A^1_2 + + \frac{1}{2} (A_2^1)^2 + \frac{1}{2} (A^2_2)^2 + \frac{1}{2} (A^3_2)^2  \, dx \\
 =&\int\limits_{\Omega} g(A) \,dx.
\end{align*}
Again by Corollary \ref{exfus} the approximating metrics satisfy \eqref{explrecseq} and the other conditions are clearly satisfied.

\bibliographystyle{vancouver}
\bibliography{Literatur}

\section*{Figure captions}

\captionof{mytype}{Partitioning of $\R^m$ into the sets $A_z$, $B_z$ and $Q$}

\section*{Figures}

\begin{center}
\begin{tikzpicture}[scale=0.5]
 \filldraw [color=gray!40] (0,0) rectangle (13,13);
 \draw[-] (0,0) -- (13,0);
 \draw[-] (0,0) -- (0,13);
 \draw [fill=gray!20] (1,1) rectangle (6,6);
 \draw [fill=white] (2,2) rectangle (5,5);
 \draw [fill=gray!20] (7,7) rectangle (12,12);
 \draw [fill=white] (8,8) rectangle (11,11);
 \draw [fill=gray!20] (1,7) rectangle (6,12);
 \draw [fill=white] (2,8) rectangle (5,11);
 \draw [fill=gray!20] (7,1) rectangle (12,6);
 \draw [fill=white] (8,2) rectangle (11,5);
 \fill (2,2) circle (2 pt);
 \node [below] at (2,2) {$\sigma_{z_1}$};
 \fill (8,2) circle (2 pt);
 \node [below] at (8,2) {$\sigma_{z_2}$};
 \fill (2,8) circle (2 pt);
 \node [below] at (2,8) {$\sigma_{z_3}$};
 \fill (8,8) circle (2 pt);
 \node [below] at (8,8) {$\sigma_{z_4}$};
 \fill (1,1) circle (2 pt);
 \node [below] at (1,1) {$\tau_{z_1}$};
 \fill (7,1) circle (2 pt);
 \node [below] at (7,1) {$\tau_{z_2}$};
 \fill (1,7) circle (2 pt);
 \node [below] at (1,7) {$\tau_{z_3}$};
 \fill (7,7) circle (2 pt);
 \node [below] at (7,7) {$\tau_{z_4}$};
 \node at (3.5,3.5) {$B_{z_1}$};
 \node at (9.5,3.5) {$B_{z_2}$};
 \node at (3.5,9.5) {$B_{z_3}$};
 \node at (9.5,9.5) {$B_{z_4}$};
 \node at (3.5,1.5) {$A_{z_1}$};
 \node at (9.5,1.5) {$A_{z_2}$};
 \node at (3.5,7.5) {$A_{z_3}$};
 \node at (9.5,7.5) {$A_{z_4}$};
 \node at (5.5,6.5) {$Q$};
\end{tikzpicture}\label{Bild}
\end{center}

\end{document}